\def\CC{\leavevmode\setbox0=\hbox{h}\dimen=\ht0 \advance \dimen by-1ex\rlap{\raise.9\dimen\hbox{\kern .15em \char'27}} C}
\def\IE{{\mathbb E}}
\def\IP{{\mathbb P}}
\def\IR{{\mathbb R}}
\def\IN{{\mathbb N}}
\def\IZ{{\mathbb Z}}
\def\n{\noindent}
\def\dis{\displaystyle}
\def\o{\omega}
\def\fr{\mbox{\footnotesize $\dis\frac{1}{2}$}}
\def\ov{\overline}
\def\ve{\varepsilon}
\def\f{\footnotesize}
\def\r{\rightarrow}
\def\wh{\widehat}
\def\wt{\widetilde}
\def\cA{{\cal A}}
\def\cB{{\cal B}}
\def\cC{{\cal C}}
\def\cD{{\cal D}}
\def\cF{{\cal F}}
\def\cG{{\cal G}}
\def\cV{{\cal V}}
\def\lo{\lambda_{1,\omega}}
\def\llo{\lambda_{2,\omega}}
\def\lob{\lambda_{1,\omega}(B_\ell)}
\def\llob{\lambda_{2,\omega}(B_\ell)}
\def\lod{\lambda_{1,\omega}(D_0)}
\def\lol{(\log \ell)}
\newtheorem{theorem}{Theorem}[section]
\newtheorem{lemma}[theorem]{Lemma}
\newtheorem{proposition}[theorem]{Proposition}
\newtheorem{remark}[theorem]{Remark}
\begin{document}

\noindent
~

\bigskip
\begin{center}
{\bf ON THE SPECTRAL GAP IN THE KAC-LUTTINGER MODEL \\ AND BOSE-EINSTEIN CONDENSATION}
\end{center}

\begin{center}
Alain-Sol Sznitman

\bigskip
\textit{In memory of Francis Comets}
\end{center}

\begin{center}
\end{center}

\begin{abstract}
We consider the Dirichlet eigenvalues of the Laplacian among a Poissonian cloud of hard spherical obstacles of fixed radius in large boxes of $\IR^d$, $d \ge 2$. In a large box of side-length $2 \ell$ centered at the origin, the lowest eigenvalue is known to be typically of order $(\log \ell)^{-2/d}$. We show here that with probability arbitrarily close to $1$ as $\ell$ goes to infinity, the spectral gap stays bigger than $\sigma (\log \ell)^{-(1 + 2/d)}$, where the small positive number $\sigma$ depends on how close to $1$ one wishes the probability. Incidentally, the scale $(\log \ell)^{-(1+ 2/d)}$ is expected to capture the correct size of the gap. Our result involves the proof of new deconcentration estimates. Combining this lower bound on the spectral gap with the results of Kerner-Pechmann-Spitzer, we infer a type-I generalized Bose-Einstein condensation in probability for a Kac-Luttinger system of non-interacting bosons among Poissonian spherical impurities, with the sole macroscopic occupation of the one-particle ground state when the density exceeds the critical value.
\end{abstract}

\vspace{4cm}

\n
Departement Mathematik \\
ETH Z\"urich\\
CH-8092 Z\"urich\\
Switzerland

\newpage
\thispagestyle{empty}
~

\newpage
\setcounter{page}{1}

\setcounter{section}{-1}
\section{Introduction}

In this article we are interested in the Kac-Luttinger model   \cite{KacLutt73},  \cite{KacLutt74}, and consider the Dirichlet eigenvalues of the Laplacian among a Poissonian cloud of hard spherical obstacles in large boxes of $\IR^d$, $d \ge 2$. Our main result states an asymptotic lower bound for the spectral gap, that is, for the difference between the second lowest and the lowest Dirichlet eigenvalues. In a large box of side-length $2 \ell$ centered at the origin, the lowest eigenvalue is known to be typically of order $(\log \ell)^{-2/d}$. We show here that with probability arbitrarily close to $1$ as $\ell$ goes to infinity, the spectral gap stays bigger than $\sigma (\log \ell)^{-(1 + 2/d)}$, where the small positive number $\sigma$ depends on how close to $1$ one wishes the probability. Incidentally, $(\log \ell)^{-(1 + 2/d)}$ is expected to capture the correct size of the spectral gap. Whereas detailed information on the statistics of eigenvalues in large boxes is known for various kinds of random potentials, see \cite{BiskKoni16}, \cite{GermKlop13}, Chapter 6 \S 3 of \cite{Koni16}, or Section 6 of \cite{Astr16}, much less seems to be known in the case of hard or soft Poissonian obstacles. Part of the difficulty stems from the delicate nature of the competition between the so-called  ``clearings''. Loosely speaking, these are nearly spherical pockets of rough size $(\log \ell)^{1/d}$ with a rarefied presence of the obstacles that underpin low eigenvalues. Various facets of these questions emerge in related studies concerning Brownian motion in a Poissonian potential and kindred models, see \cite{Szni98a}, \cite{Koni16}, \cite{Astr16}, and references therein, as well as \cite{DingXu20}, \cite{DingFukuSunXu21}, and \cite{PoisSime22} for some recent developments. In the present work we bring new deconcentration estimates. We also have a different purpose. Combining the lower bound on the spectral gap, the known Lifshitz tail behavior of the model, and the results of Kerner, Pechmann and Spitzer in \cite{KernPechSpit20}, we infer a so-called type-I generalized Bose-Einstein condensation in probability for a Kac-Luttinger system of non-interacting bosons among Poissonian spherical impurities in the thermodynamic limit, which appears to be novel, see \cite{LenoPastZagr04}, \cite{KernPechSpit20}, \cite{Pech20}.

\medskip
We now describe the results in more detail, and refer to Section 1 for additional notation and references. We consider $\IR^d$, $d \ge 2$, and the canonical law $\IP$ on the space $\Omega$ of locally finite simple point measures on $\IR^d$, of a Poisson cloud of constant intensity $\nu > 0$. To each point of the cloud we attach a spherical obstacle corresponding to a closed ball of radius $a > 0$ centered at the point. Given $\ell > 0$ and the open box of side-length $2 \ell$ centered at the origin $B_\ell = (-\ell, \ell)^d$, we denote by $B_{\ell,\o}$ the complement of the obstacle set in $B_\ell$. We write
\begin{equation}\label{0.1}
0 < \lambda_{1,\o} (B_\ell) \le \lambda_{2,\o} (B_\ell) \le \dots \le \lambda_{i,\o} (B_\ell) \le \dots \le \infty,
\end{equation}
for the successive Dirichlet eigenvalues of $-\frac{1}{2}\, \Delta$ in $B_{\ell,\o}$, repeated according to their multiplicity, with the convention that $\lambda_{i,\o}(B_\ell) = \infty$ for all $i \ge 1$, when $B_{\ell,\o} = \emptyset$ (otherwise all $\lambda_{i,\o}(B_\ell)$ are finite).

\medskip
We let $\o_d$ stand for the volume of an open ball of radius $1$, $\lambda_d$ for the corresponding principal Dirichlet eigenvalue of $-\frac{1}{2} \,\Delta$, and define
\begin{align}
R_0  &= \Big(\frac{d}{\nu \o_d}\Big)^{1/d},\;\mbox{the radius of a ball of volume $\frac{d}{\nu}$}, \label{0.2}
\\[1ex]
c_0 & = \lambda_d \Big(\frac{d}{\nu \o_d}\Big)^{-2/d},  \; \mbox{the principal Dirichlet eigenvalues of $-\frac{1}{2} \,\Delta$ in an open ball} \label{0.3}
\\[-1.5ex]
&\hspace{2.5cm}\,\mbox{ of radius $R_0$} .\nonumber
 \end{align}

\n
One knows from Theorem 4.6, p.~191 of \cite{Szni98a} (which treats the more delicate case of soft obstacles, see (\ref{1.18}), but remains valid in the present context) that
\begin{align}
&\mbox{there exists $\chi \in (0,d)$ and $\gamma > 0$ such that on a set of full $\IP$-measure, for large $\ell$}\label{0.4}
\\
&c_0 (\log \ell)^{-2/d} - (\log \ell)^{-(2 + \chi)/d} \le \lambda_{1,\o} (B_\ell) \le c_0 (\log \ell)^{-2/d} + \gamma (\log \ell)^{-3/d}. \nonumber
\end{align}

\n
The main result of the present article is Theorem \ref{theo6.1}. It proves an asymptotic lower bound on the spectral gap. Namely, it shows that
\begin{equation}\label{0.5}
\lim\limits_{\sigma \r 0} \; \limsup\limits_{\ell \r \infty} \, \IP[\lambda_{1,\o} (B_\ell) < \infty \;\mbox{and} \ \lambda_{2,\o} (B_\ell) - \lambda_{1,\o} (B_\ell) < \sigma(\log \ell)^{-(1 + 2/d)} ] = 0,
\end{equation}
and as an immediate consequence that for any $a_\ell = o((\log \ell)^{-(1+ 2/d)})$, as $\ell \r \infty$,
\begin{equation}\label{0.6}
\lim\limits_{\ell \r \infty} \; \IP[\lambda_{1,\o} (B_\ell) < \infty \;\mbox{and} \ \lambda_{2,\o} (B_\ell) - \lambda_{1,\o} (B_\ell)  < a_\ell ] = 0.
\end{equation}

\n
As an aside, it is plausible that  $(\log \ell)^{-(1 + 2/d)}$ captures the correct size of the spectral gap, so that its product with $(\log \ell)^{1 + 2/d}$ remains tight as $\ell \r \infty$: the bounds on the fluctuations of $\lambda_{1,\o}(B_\ell)$ derived in Section 3 of \cite{Szni97b} and their link with the behavior of the spectral gap studied here, make a compelling case, see Remark \ref{rem6.5} 1). It should also be pointed out that the discrepancy between the upper and lower bounds in (\ref{0.4}) is much bigger than $(\log \ell)^{-(1 + 2/d)}$, and the proof of (\ref{0.5}) does {\it not} consist in getting a ``good lower bound'' on $\lambda_{2,\o} (B_\ell)$, and a ``good upper bound'' on $\lambda_{1,\o}(B_\ell)$. Perhaps, to illustrate the significance of the scale $(\log \ell)^{-(1 + 2/d)}$, one can mention that when $\lambda (\ell) \sim c_0 \lol^{-2/d}$, as $\ell \r \infty$, and $\lambda '(\ell) = \lambda(\ell) + \sigma \lol^{-(1+2/d)}$, the balls with principal Dirichlet eigenvalues for $-\frac{1}{2}\, \Delta$ corresponding to $\lambda$ and $\lambda '$ have volumes equivalent to $\frac{d}{\nu} \;\lol$, as $\ell \r \infty$, but the difference of their volumes $\o_d(\lambda_d / \lambda)^{d/2} - \o_d (\lambda_d / \lambda ')^{d/2}$ tends to the constant $d^2 \, \sigma / (2 c_0 \, \nu)$, see also (\ref{0.9}).

\medskip
With this in mind let us describe the strategy of the proof of Theorem \ref{6.1} (see (\ref{0.5})). It first involves showing in Theorem \ref{theo4.1}, with the help of the {\it method of enlargement of obstacles}, see Chapter 4 of \cite{Szni98a}, and quantitative Faber-Krahn inequalities, see \cite{BrasPhilVeli15} and \cite{FuscMaggPrat09}, that for large $\ell$ on most of the event in (\ref{0.5}) there are distant balls $\wh{B}$ and $\wh{B}'$ in $B_\ell$ with same radius $\wh{R}$ slightly bigger than $R_0 (\log \ell)^{1/d}$, see (\ref{0.2}), such that the principal Dirichlet eigenvalues $\lambda_{1,\o}(\wh{B})$ and $\lambda_{1,\o}(\wh{B}')$, see (\ref{1.9}), are both within close range (slightly bigger than $\sigma(\log \ell)^{-(1 + 2/d)}$) of $\lambda_{1,\o}(B_\ell)$.

\medskip
Then, from this fact one reduces in Proposition \ref{prop5.3} the task of showing (\ref{0.5}) to the proof of a deconcentration type estimate for the distribution of $\lambda_{1,\o}(D_0)$, with $D_0$ an open box of side-length $L_0 = 10( \lceil R_0 \rceil + 1) (\log \ell)^{1/d}$, see (\ref{0.2}), in a specific deviation regime of low values, with additional information on the corresponding principal Dirichlet eigenfunction. Namely, one introduces a suitable level $t_\ell$ such that $\IP[\lambda_{1,\o} (D_0) \le t_\ell]$ is of order $(\log \ell) / \ell^d$, see (\ref{5.9}) ($t_\ell$ depends on an additional parameter $\Gamma$, which eventually tends to infinity, and $t_\ell$ is equivalent to $c_0(\log \ell)^{-2/d}$, as $\ell \r \infty$). To prove (\ref{0.5}), it then suffices to show the suitable smallness of (with $\wh{\eta}$ a positive real provided by Theorem \ref{theo4.1}):
\begin{equation}\label{0.7}
\limsup\limits_{\ell \rightarrow \infty}  \; \frac{\ell^d}{(\log \ell)} \; \sup\limits_{t \le t_\ell} \; \IP [ \lambda_{1,\o} (D_0) \in [t, t + \ve_\ell], \; \varphi_{1,D_0,\o} \le e^{-(\log \ell)^{\wh{\eta}}} \ \mbox{in} \; D_0 \backslash D_0^{\rm int}] ,
\end{equation}

\n
where $D_0^{\rm int}$ stands for the closed concentric sub-box of $D_0$ of side-length $(2\lceil R_0 \rceil + 4) (\log \ell)^{1/d}$, $\varphi_{1,D_0,\o}$ for the principal Dirichlet eigenfunction attached to $D_0$ in the configuration $\o$, see (\ref{1.14}), and $\ve_\ell$ for a quantity slightly bigger than $\sigma(\log \ell)^{-(1 + 2/d)}$, see (\ref{5.17}).

\medskip
The control of (\ref{0.7}) and the resulting lower bound on the spectral gap (\ref{0.5}), i.e.~Theorem \ref{theo6.1}, hinges on the main deconcentration estimate (\ref{6.2}) in Theorem \ref{theo6.2}, which embodies a central new aspect of this work. It allows to dominate up to a multiplicative constant the probability that appears in (\ref{0.7}) by $\IP[\lambda_{1,\o} (D_0) \in J_i]$ for any $i$, for a suitable (large) collection $J_1,\dots,J_m$ of pairwise disjoint sub-intervals of $(0,t_\ell)$. Whereas increasing $\lambda_{1,\o} (D_0)$ is a comparatively easier task (it can be achieved by the addition of one single obstacle in a location where $\varphi_{1,D_0,\o}$ is not too small), decreasing $\lambda_{1,\o}(D_0)$ in a controlled fashion, as required by the constraint that the many disjoint $J_1,\dots,J_m$ remain in $(0,t_\ell)$, is more delicate. For this task we do not leverage the geometric information concerning the underlying near spherical clearings of Theorem \ref{theo4.1}. This information is anyway too coarse. We instead perform a ``gentle expansion'' of the Poisson cloud by homotheties of ratio $\exp\{u_i / |D_0|\}$ for suitable $u_1 < \dots < u_m$ in $(0,1)$ in the proof of Theorem \ref{theo6.2}, see (\ref{6.49}).  They dilute the Poisson point process and tendentially decrease eigenvalues. This is tailored so that the expansion corresponding to $u_i$ takes $\lambda_{1,\o}(D_0)$ from $[t, t + \ve_\ell]$ to $J_i$.
Incidentally, the constraint on $\varphi_{1,D_0,\o}$ in (\ref{0.7}) is important and ensures a proper centering of the underlying clearing in $D_0$, shielding it from damage at the boundary of $D_0$ under the gentle expansion. We also refer to Lemma 3.3 of \cite{DingFukuSunXu21} for other kinds of transformations involving the removal of obstacles, which however do not seem adequate for the task of proving (\ref{6.2}), and to the proof of Proposition 1.12 of \cite{DumiRiveRodrVann21} for a recent instance of deconcentration estimates in a percolation context. Let us also mention that unlike the results of the previous sections, which rather straightforwardly could be adapted to the case of Poissonian soft obstacles, see (\ref{1.18}), the proof of the deconcentration estimates in Theorem \ref{theo6.2} makes a genuine use of the hard spherical Poissonian obstacles (and the continuity of the space, as in \cite{DumiRiveRodrVann21}).

\medskip
Owing to the work Kerner-Pechmann-Spitzer \cite{KernPechSpit20}, our results have a natural application to a version of the problem investigated by Kac and Luttinger in \cite{KacLutt73},  \cite{KacLutt74} concerning the Bose-Einstein condensation of a gas of non-interacting bosons among hard obstacles corresponding to balls of radius $a > 0$ centered at the points of a Poisson cloud of intensity $\nu > 0$ on $\IR^d$, $d \ge 2$. In the one-dimensional case we refer to the results in \cite{KernPechSpit19b}, \cite{KernPechSpit20} for the related Luttinger-Sy model, and in \cite{Pech20} for soft Poissonian obstacles.

\medskip
In the model under consideration here, one knows, see (\ref{7.9}), (\ref{7.10}), that the density of states $m(d \lambda)$ is a measure on $\IR_+$ with Laplace transform: 
\begin{equation}\label{0.8}
\dis\int_{[0,\infty)} e^{-t \lambda}\; m(d\lambda) = (2 \pi t)^{-d/2} \,E^t_{0,0} [\exp \{ - \nu \, |W_t^a|\}], \; \mbox{for $t > 0$},
\end{equation}

\n
where $E^t_{0,0}$ denotes the expectation for a Brownian bridge in time $t$ in $\IR^d$ from the origin to itself, and $W_t^a$ stands for the Wiener sausage in time $t$ and radius $a$ of that bridge (i.e. the closed $a$-neighborhood of the bridge trajectory). The density of states has a so-called {\it Lifshitz tail behavior}, see for instance Corollary 3.5 of \cite{Szni90a}, the original proof going back to the work of Donsker-Varadhan \cite{DonsVara75b}, see also Chapter 10.B of \cite{PastFigo92}:
\begin{equation}\label{0.9}
m([0,\lambda]) = \exp\big\{- \nu \o_d(\lambda_d/\lambda)^{d/2} \big(1 + o(1)\big)\big\}, \; \mbox{as $\lambda \r 0$}.
\end{equation}
One then has (see Section 2 of  \cite{KernPechSpit20}) a finite critical density for the system at inverse temperature $\beta > 0$:
\begin{equation}\label{0.10}
\rho_c (\beta) = \dis\int^\infty_0 (e^{\beta \lambda} - 1)^{-1} \,m(d \lambda) < \infty.
\end{equation}
Thus, in the thermodynamic limit, for a fixed density $\rho \in (0,\infty)$, one lets the particle number $N \ge 1$ and the length scales $\ell_N$, such that $N = \rho \,|B_{\ell_N}|$ tend to infinity. One then defines suitably truncated $\ov{\lambda}_{j,\o} (B_{\ell_N})$, $j \ge 1$, see (\ref{7.5}), which coincide with the $\lambda_{j,\o}(B_{\ell_N})$, $j \ge 1$, when $B_{\ell_N,\o} \not= \emptyset$ (an increasing sequence of events with $\IP$-probability tending to $1$), and corresponding occupation numbers $\ov{n}_N^{\,j,\o}$ of the $j$-th eigenstate for a grand-canonical version of non-interacting bosons with density $\rho$ in $B_{\ell_N}$ in the presence of the spherical impurities attached to $\o$, see (\ref{7.7}). As an application of the lower bound on the spectral gap in Theorem \ref{theo6.1} and the results in \cite{KernPechSpit20}, we show in Theorem \ref{theo7.1} that when $\rho > \rho_c(\beta)$ a so-called type-I Bose-Einstein condensation in probability in a single mode takes place:
\begin{align}
\mbox{as $N \r \infty$}, &\; \mbox{$\ov{n}\,^{1,\o}_N / N$ tends to $\big(\rho - \rho_c(\beta)\big)/ \rho$ in $\IP$-probability, and} \label{0.11}
\\
&\; \mbox{for all $j \ge 2$, $\ov{n}^{\,j,\o} / N$ tends to $0$ in $\IP$-probability.} \nonumber
\end{align}
\n
This seems to be the first multi-dimensional example in the natural class of random Poissonian obstacles for which type-I condensation has been established, see \cite{KernPechSpit20}.

\medskip

We will now describe the organization of this article. Section 1 collects notation as well as some basic results concerning the Dirichlet eigenvalues and the eigenfunctions under consideration. It also states the quantitative Faber-Krahn inequality of \cite{BrasPhilVeli15}, see (\ref{1.17}). Section 2 briefly recalls the results of the method of enlargement of obstacles from Chapter 4 of \cite{Szni98a} that will be used in Sections 3 and 4. Section 3 introduces a certain event $T$ in (\ref{3.24}), encapsulating typical configurations in scale $\ell$, of probability tending to $1$ as $\ell$ goes to infinity. In Section 4, the main result is Theorem \ref{theo4.1}, which in particular reduces the analysis to the consideration of two distant balls of radius $\wh{R}$ slightly bigger than $R_0 (\log \ell)^{1/d}$ within $B_\ell$, with principal Dirichlet eigenvalues within close range (almost $\sigma (\log \ell)^{-(1+2/d)}$) of $\lambda_{1,\o} (B_\ell)$. In Section 5, the proof of the lower bound on the spectral gap is reduced to proving (\ref{0.7}) in Proposition \ref{prop5.3}. Section 6 proves the deconcentration estimates in Theorem \ref{theo6.2} from which the lower bound on the spectral gap, see (\ref{0.5}), follows in Theorem \ref{theo6.1}. Section 7 contains the application to the Bose-Einstein condensation for the version of the Kac-Luttinger model, which we consider.

\medskip
Finally, throughout the article we denote by $c, \wt{c}, c', \dots$ positive constants changing from place to place, which depend on $d$. From Section 3 onwards they will also implicitly depend, unless otherwise stated, on the parameters selected there in the context of the method of enlargement of obstacles. As for numbered constants such as $c_0,c_1,c_2,\dots$, they refer to the value corresponding to their first appearance in the text (for instance see (\ref{0.3}) for $c_0$).

\bigskip\n
{\bf Acknowledgements:} The author wishes to thank Joachim Kerner, Maximilian Pechmann, and Wolfgang Spitzer for several stimulating discussions during the conference ``On mathematical aspects of interacting systems in low dimension'' that took place in Hagen in June 2019.

\section{Set-up and some useful facts}
\setcounter{equation}{0}

In this section we collect further notation as well as some results concerning eigenvalues and eigenfunctions. We also state the quantitative Faber-Krahn Inequality at the end of the section. Throughout we assume that $d \ge 2$.

\medskip
When $I$ is a finite set we let $| I |$ stand for the number of elements of $I$. We write $\IN = \{0,1,\dots \}$ for the set of non-negative integers. For $a,b$ real numbers we write $a \wedge b$, resp. $a \vee b$, for the minimum, resp. the maximum, of $a$ and $b$. Given $r \ge 0$ we denote by $[r]$ the integer part of $r$ and by $\lceil r \rceil$ the ceiling of $r$. For $(a_n)_{n \ge 1}$ and $(b_n)_{n\ge 1}$ positive sequences, the notation $a_n \gg b_n$ or $b_n = o(a_n)$ means that $\lim_n b_n/a_n = 0$. We write $| \cdot |$ and $| \cdot |_\infty$ for the Euclidean and the supremum norms on $\IR^d$. We denote by $B(x,r)$ and $ \overset{\circ}{B} (x,r)$ the closed and open Euclidean balls with center $x \in \IR^d$ and radius $r \ge 0$. We write $B_\infty (x,r)$ and $ \overset{\circ}{B}_\infty (x,r)$ in the case of the supremum norm, and also refer to them as the closed and open boxes with center $x$ and side-length $2r$. Given $A, B \subseteq \IR^d$, we denote by $d(A,B) = \inf\{ |x-y|; x \in A,y \in B\}$ the mutual Euclidean distance between $A$ and $B$, and by ${\rm diam}(A) = \sup \{|x-y|; x,y \in A\}$ the diameter of $A$. We define $d_\infty (A,B)$ and ${\rm diam}_\infty (A)$ in an analogous fashion with $|\cdot |_\infty$ in place of $| \cdot |$. We write $\cB(\IR^d)$ for the collection of Borel subsets of $\IR^d$, and for $A \in B(\IR^d)$ we let $|A|$ stand for the Lebesgue measure of $A$ (hopefully this causes no confusion with the notation for the cardinality of $A$). When $f$ is a function $f_+ = \max \{f,0\}, f_- = \max \{-f,0\}$ stand for the positive and negative parts of $f$, and $\|f\|_\infty$ for the supremum norm of $f$. When $1 \le p < \infty$ and $A \in B(\IR^d)$ we denote by $L^p(A)$ the $L^p$-space of $p$-integrable functions for the Lebesgue measure that vanish outside of $A$, and write $\| \cdot \|_p$ for the $L^p$- norm. Given $U$ an open subset of $\IR^d$, we write $H^1(U)$ and $H^1_0(U)$ for the usual Sobolev spaces (corresponding to $W^{1,2} (U)$ and $W_0^{1,2}(U)$ in \cite{Adam75}, p.~45).

\medskip
We turn to the description of the random medium. The canonical space $\Omega$ consists of locally finite, simple point measures on $\IR^d$, endowed with the canonical $\sigma$-algebra $\cG$ generated by the applications $\o \in \Omega \r \o(A) \in \IN \cup \{\infty\}$, for $A = \cB (\IR^d)$. We routinely write $\o = \Sigma_i \,\delta_{x_i}$ for a generic $\o \in \Omega$, and $x \in \o$ to denote that $x$ belongs to ${\rm supp} \,\o$ (the support of $\o$). On $\Omega$ endowed with the above $\sigma$-algebra, we let (see also \cite{LastPenr18}):
\begin{equation}\label{1.1}
\begin{array}{l}
\mbox{$\IP$ stand for the law of the Poisson point process on $\IR^d$ with constant}
\\
\mbox{intensity $\nu > 0$.} 
\end{array}
\end{equation}

\n
The radius of the obstacles is given by
\begin{equation}\label{1.2}
a > 0,
\end{equation}

\n
and the {\it obstacle set} in the configuration $\o$ is the closed subset of $\IR^d$
\begin{equation}\label{1.3}
Obs_\o = \bigcup_{x \in \o} B(x,a).
\end{equation}
For $U$ an open subset of $\IR^d$ and $\o \in \Omega$, we write
\begin{equation}\label{1.4}
U_\o = U \;\backslash \;Obs_\o 
\end{equation}

\n
for the possibly empty open subset remaining after deletion of the obstacle set. When $U$ is bounded, $U_\o$ has finitely many connected components (since $\o$ is locally finite). Of special interest for us is the case when $U$ is an open box of side-length $2 \ell$ centered at the origin:
\begin{equation}\label{1.5}
B_\ell = (-\ell, \ell)^d, \; \ell \ge 10.
\end{equation}
Note that $B_{\ell,\o}$ is non-decreasing in $\ell$, and that by a routine Borel-Cantelli type argument $\IP$-a.s., $B_{\ell,\o}$ is not empty for large $\ell$, i.e.
\begin{equation}\label{1.6}
\mbox{$\IP(\Omega_\infty) = 1$ where $\Omega_\infty = \{\o \in \Omega; B_{\ell,\o} \not= \emptyset$, for large $\ell\}$}.
\end{equation}

\n
We then proceed with some notation concerning Brownian motion. We denote by
\begin{equation}\label{1.7}
p(t,x,y) = (2 \pi t)^{-d/2} \exp\big\{ - \mbox{\f $\dis\frac{|y - x|^2}{2t} $}\big\}, \; t > 0, \, x,y \in \IR^d,
\end{equation}

\n
the transition density for Brownian motion. When $x \in \IR^d$, we let $P_x$ stand for the Wiener measure starting from $x$, i.e.~the canonical law of Brownian motion starting at $x$ on the space $W = C(\IR_+, \IR^d)$ of continuous $\IR^d$-valued trajectories. We write $(X_t)_{t \ge 0}$ for the canonical process, $(\cF_t)_{t \ge 0}$ for the canonical right-continuous filtration, and $(\theta_t)_{t \ge 0}$ for the canonical shift. Given an open subset $U$ of $\IR^d$ and $w \in W$, we denote by $T_U(w) = \inf\{s \ge 0; X_s(w) \notin U\}$ the exit time from $U$. When $F$ is a closed subset of $\IR^d$ and $w \in W$, we write $H_F(w) = \inf\{s \ge 0; X_s(w) \in F\}$ for the entrance time in $F$. These are $(\cF_t)$-stopping times.

\medskip
For $U$ open subset of $\IR^d$ and $\o \in \Omega$, we denote by
\begin{equation}\label{1.8}
r_{U,\o} (t,x,y) \; \big(\le p(t,x,y)\big) \; \mbox{for} \; t > 0, \, x,y \in \IR^d,
\end{equation}

\n
the transition kernel of Brownian motion killed outside $U_\o$, see (3.4), p.~13 of \cite{Szni98a}. It is jointly measurable, symmetric in $x,y$, it vanishes if $x$ or $y$ does not belong to $U_\o$, satisfies the Chapman-Kolmogorov equations, see pp.~13,14 of \cite{Szni98a}, and it is a continuous function of $(t,x,y)$ in $(0,\infty) \times U_\o \times U_\o$, see Proposition 3.5, p.~18 of \cite{Szni98a}. If $U$ satisfies an exterior cone condition, so does $U_\o$, and the proof of Proposition 3.5, p.~18 of \cite{Szni98a} can be adapted (see (3.20) on p.~18 of this reference) to show that $r_{U,\o} (t,x,y)$ is a continuous function on $(0,\infty) \times \IR^d \times \IR^d$. We will use this fact when $U$ is an open box in $\IR^d$.

\medskip
We now proceed with the discussion of the eigenvalues and eigenfunctions. Given a bounded open subset $U$ of $\IR^d$, we denote by
\begin{equation}\label{1.9}
0 < \lambda_{1,\o} (U) \le \lambda_{2,\o}(U) \le \dots \le \lambda_{i,\o}(U) \le \dots \le \infty,
\end{equation}

\n
the successive Dirichlet eigenvalues of $-\frac{1}{2} \,\Delta$ in $U_\o$, repeated according to their multiplicity, with the convention that $\lambda_{i,\o} (U) = \infty$ for all $i \ge 1$, if $U_\o = \emptyset$. They are measurable in $\o$ (as follows for instance from the min-max principles, see Version 3 in Theorem 12.1, p.~301 of \cite{LiebLoss01}). Note that the open set $U_\o$ need not be connected (even when $U$ is connected) and the $\lambda_{i,\o}(U), i \ge 1$, correspond to the non-decreasing reordering of the collection of Dirichlet eigenvalues of $-\frac{1}{2} \, \Delta$ in the finitely many connected components of $U_\o$. Also some of the $\lambda_{i,\o}(U)$, $i \ge 1$, may not be simple, for instance in the case $U = B_\ell$, and no obstacle falls into $B_\ell$, i.e. when $B_{\ell,\o} = B_\ell$, an event of positive probability.

\medskip
When $U_\o \not= \emptyset$ and $\lambda$ is a Dirichlet eigenvalue of $-\frac{1}{2} \, \Delta$ in $U_\o$, an eigenfunction $\varphi$ attached to $\lambda$ will be square integrable and satisfy
\begin{equation}\label{1.10}
\varphi(x) = e^{\lambda t} \dis\int r_{U,\o} (t,x,y) \,\varphi(y)\, dy, \; \mbox{for all $t > 0$ and $x \in \IR^d$}.
\end{equation}

\n
In particular, we will implicitly choose the version of $\varphi$, which is continuous in $U_\o$ and identically equal to $0$ outside $U_\omega$. When $U$ satisfies an exterior cone condition and $U_\omega \not= \emptyset$, such an eigenfunction $\varphi$ will be continuous on $\IR^d$ by the remark above (\ref{1.9}). The next lemma will be used repeatedly.

\begin{lemma}\label{lem1.1}
When $U$ is a bounded open subset of $\IR^d$, $\o \in \Omega$, and $U_\o \not= \emptyset$, then for any Dirichlet eigenvalue $\lambda $ of $-\frac{1}{2} \,\Delta$ in $U_\o$, and eigenfunction $\varphi$ for $\lambda$ of unit $L^2$-norm, one has
\begin{equation}\label{1.11}
\|\varphi\|_\infty \le c_1 \, \lambda^{d/4} \; \mbox{(with $c_1 = (4 \pi)^{-d/4} \,e)$}.
\end{equation}

\n
Moreover, when $O$ is a connected component of $U_\o$, and $O'$ an open subset of $O$ with principal Dirichlet eigenvalue $\lambda '$ for $-\frac{1}{2} \, \Delta$ bigger than $\lambda$, then
\begin{equation}\label{1.12}
\varphi(y) = E_y [\varphi(X_{T_{O'}}) \; \exp \{\lambda T_{O'}\}, \, T_{O'} < T_{U_\o}],\;\mbox{for all $y \in O'$}.
\end{equation}
\end{lemma}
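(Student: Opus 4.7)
The plan is to handle the two assertions separately, both starting from the semigroup identity (\ref{1.10}).

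For the sup-norm bound (\ref{1.11}), I would apply Cauchy--Schwarz in (\ref{1.10}) to get
$$|\varphi(x)| \le e^{\lambda t}\Bigl(\int r_{U,\o}(t,x,y)^2\,dy\Bigr)^{1/2}\|\varphi\|_2.$$
Using $r_{U,\o}(t,x,y) \le p(t,x,y)$ and Chapman--Kolmogorov (together with the symmetry $p(t,x,y)=p(t,y,x)$),
$$\int p(t,x,y)^2\,dy = p(2t,x,x) = (4\pi t)^{-d/2},$$
so $|\varphi(x)|\le e^{\lambda t}(4\pi t)^{-d/4}$ for every $t>0$. Specializing to $t = 1/\lambda$ yields exactly $\|\varphi\|_\infty\le (4\pi)^{-d/4} e\,\lambda^{d/4}$, i.e.~(\ref{1.11}) with $c_1=(4\pi)^{-d/4}e$.

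For the representation formula (\ref{1.12}), the plan is to show that the process
$$M_s = e^{\lambda s}\,\varphi(X_s)\,1_{\{s < T_{U_\o}\}}, \quad s\ge 0,$$
is an $(\cF_s)$-martingale under $P_y$ for any $y\in U_\o$, and then to apply optional stopping at $T_{O'}\wedge t$ and let $t\to\infty$. The martingale property follows from the strong Markov property combined with (\ref{1.10}): on $\{s\ge T_{U_\o}\}$ both $M_s$ and $M_{s+u}$ vanish, while on $\{s<T_{U_\o}\}$ the shift relation $T_{U_\o}=s+T_{U_\o}\circ\theta_s$ together with (\ref{1.10}) rewrites $E_y[M_{s+u}\mid\cF_s]$ as $e^{\lambda s}\varphi(X_s) = M_s$. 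Applying the stopping theorem at $T_{O'}\wedge t$ gives $\varphi(y)=E_y[M_{T_{O'}\wedge t}]$, and since $O'\subset O\subset U_\o$ the stopped variable equals $e^{\lambda (T_{O'}\wedge t)}\varphi(X_{T_{O'}\wedge t})\,1_{\{T_{O'}\wedge t<T_{U_\o}\}}$; as $t\to\infty$ the limit on the event $\{T_{O'}<T_{U_\o}\}$ is the desired integrand, while on $\{T_{U_\o}\le T_{O'}\}$ one gets $0$ since $\varphi(X_{T_{U_\o}})=0$ by the continuity convention on $\varphi$.

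The one delicate point (and the main obstacle in the optional-stopping step) is to justify the passage to the limit $t\to\infty$, for which I would invoke uniform integrability via the hypothesis $\lambda<\lambda'$. Using the already established bound $\|\varphi\|_\infty<\infty$, one has $|M_{T_{O'}\wedge t}|\le\|\varphi\|_\infty e^{\lambda T_{O'}}$, and the hypothesis $\lambda<\lambda'$ yields $E_y[e^{\lambda T_{O'}}]<\infty$. Concretely, the principal Dirichlet eigenvalue $\lambda'$ of $-\frac12\Delta$ on $O'$ satisfies $P_y[T_{O'}>t]\le c(y)\,e^{-\lambda' t}$ for all $t\ge 1$ (as follows from the spectral representation of the Dirichlet heat kernel, or equivalently from Khasminskii's lemma applied to the torsion function), so $e^{\lambda T_{O'}}$ is integrable for every $\lambda<\lambda'$. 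Dominated convergence then produces (\ref{1.12}).
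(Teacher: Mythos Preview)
Your proof of (\ref{1.11}) is correct and matches the paper's argument almost exactly; the only cosmetic difference is that the paper applies Chapman--Kolmogorov to $r_{U,\o}$ first (obtaining $r_{U,\o}(2t,x,x)^{1/2}$) and only then bounds by $p$, whereas you bound by $p$ before applying Chapman--Kolmogorov. Both lead to $(4\pi t)^{-d/4}$ and the same choice $t=1/\lambda$.

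For (\ref{1.12}) the paper gives no argument at all and simply cites (1.54), p.~107 of \cite{Szni98a}. Your martingale/optional-stopping proof is the standard derivation behind that citation and is correct; the key point---that $\lambda<\lambda'$ guarantees $E_y[e^{\lambda T_{O'}}]<\infty$ and hence uniform integrability of the stopped martingale---is exactly what is needed. One small clarification: since $O'\subset O$ and $O$ is a connected component of $U_\o$, one has $T_{O'}\le T_O=T_{U_\o}$ $P_y$-a.s.\ for $y\in O'$, so the complementary event is $\{T_{O'}=T_{U_\o}\}$; there the indicator in $M_{T_{O'}}$ vanishes (equivalently, $X_{T_{O'}}\notin U_\o$ so $\varphi(X_{T_{O'}})=0$ by the convention below (\ref{1.10})), which is what makes the limit agree with the right-hand side of (\ref{1.12}).
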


\begin{proof}
We first prove (\ref{1.11}). We use (\ref{1.10}) with $t = \lambda^{-1}$, so that for any $x \in U_\o$ one has:
\begin{equation}\label{1.13}
\begin{split}
\varphi(x)  = &\; e \dis\int r_{U,\o} (t,x,y) \,\varphi(y) \, dy \stackrel{\rm Cauchy-Schwarz}{\le} 
\\
&\; e \;\big(\dis\int r_{U,\o}  (t,x,y)^2 dy\big)^{1/2} \big(\dis\int \varphi^2(y) \,dy\big)^{1/2} 
\\
&\hspace{-4.5ex} \stackrel{\rm symmetry}{=} e\; \Big(\dis\int r_{U,\o} (t,x,y) \,r_{U,\o} (t,y,x)\,dy \Big)^{1/2}  \stackrel{\rm Chapman-Kolmogorov}{=} e \,r_{U,\o} (2t,x,x)^{1/2}
\\
&\!\!\!\! \!\le \; e \,p(2t,x,x)^{1/2} = e(4 \pi t)^{-d/4} = e (\lambda / 4 \pi)^{d/4},
\end{split}
\end{equation}
and (\ref{1.11}) follows.

\medskip
As for the identity (\ref{1.12}), it follows from the application of (1.54), p.~107 of \cite{Szni98a}.
\end{proof}

\medskip
As already mentioned, when $U$ is a bounded open set and $\o \in \Omega$, the open set $U_\o$ need not be connected, and the eigenvalue $\lambda_{1,\o}(U)$ need not be simple. To take care of this feature, when $U_\o \not= \emptyset$, we denote by $\varphi_{1,U,\o}$ the $L^2$-normalized orthogonal projection of the function $1$ on the eigenspace attached to $\lambda_{1,\o}(U)$:
\begin{equation}\label{1.14}
\varphi_{1,U,\o}(x) = \left\{ \begin{array}{l}
\lim_{t \r \infty} \dis\int r_{U,\o} (t,x,y)\,dy\; \Big/\; \Big\|\dis\int r_{U,\o} (t, \cdot , y) \,dy \Big\|_2 \; \mbox{if $x \in U_\o$},
\\[2ex]
\mbox{$0$, if $x \notin U_\o$}. 
\end{array}\right.
\end{equation}

\n
Note that $\varphi_{1,U,\o}(x)$ is positive exactly when $x$ belongs to a connected component of $U_\o$ with principal Dirichlet eigenvalue for $-\frac{1}{2}\,\Delta$ equal to $\lambda_{1,\o}(U)$. By convention, when $U_\o = \emptyset$, we simply set $\varphi_{1,U,\o} = 0$.

\medskip
From time to time for $U$ bounded open subset of $\IR^d$, we will use the notation 
\begin{equation}\label{1.15}
\lambda_{-\frac{1}{2} \,\Delta} (= \lambda_{1,\o = 0}(U)) \;\mbox{for the principal Dirichlet eigenvalue of $-\frac{1}{2} \, \Delta$ in $U$},
\end{equation}
so that in (\ref{0.3}) with the notation from the beginning of this section
\begin{equation}\label{1.16}
\lambda_d = \lambda_{-\frac{1}{2}\,\Delta} \big(\overset{\circ}{B} (0,1)\big).
\end{equation}

\n
We proceed with the statement of the quantitative Faber-Krahn inequality. The classical Faber-Krahn inequality states that for $U$ bounded open subset of $\IR^d$, $\lambda_{-\frac{1}{2}\, \Delta} (U)$ is bigger or equal to the principal Dirichlet eigenvalue of $-\frac{1}{2} \,\Delta$ in an open ball of same volume as $U$, that is $\lambda_d \,(\o_d / |U|)^{2/d}$. We will use in the proofs of Theorems \ref{theo4.1} and \ref{theo4.2} the following {\it quantitative version of Faber-Krahn's inequality}, see the Main Theorem on p.~1781 in \cite{BrasPhilVeli15}: One has a dimension dependent constant $c_2$ such that for any bounded non-empty open subset $U$ of $\IR^d$:
\begin{equation}\label{1.17}
\lambda_{-\frac{1}{2} \,\Delta} (U) \big(\mbox{\f $\dis\frac{|U|}{\o_d}$}\big)^{2/d} \; / \; \lambda_d - 1 \ge c_2 \,A(U)^2,
\end{equation}

\n
where $A(U) = \inf \{\frac{|U \Delta B|}{|B|}$; $B$ a ball with $|B| = |U|\}$ is the {\it Fraenkel asymmetry} of $U$ (and $\Delta$ stands for the symmetric difference).

\medskip
The Theorem 1.1 of \cite{FuscMaggPrat09} states a similar inequality with $A(U)^2$ replaced by $A(U)^4$ (and a different constant), which would also suffice for our purpose in Section 4.

\medskip
Finally, in several places we refer to {\it soft obstacles}. This corresponds to the case when we have a function $W(\cdot)$, non-negative, bounded, measurable, compactly supported, and not a.e.~equal to zero, and for each $\o = \sum_i \delta_{x_i}$ in $\Omega$ we consider the non-negative locally bounded function (the {\it random potential}\,):
\begin{equation}\label{1.18}
V(x,\o) = \sum\limits_i \,W(x - x_i), \, x \in \IR^d.
\end{equation}
The objects of study are now the Dirichlet eigenvalues and corresponding eigenfunctions of $- \frac{1}{2}\, \Delta + V (\cdot, \o)$ in bounded open subsets $U$ of $\IR^d$. The {\it hard obstacles} under consideration in this article informally correspond to the choice $W(\cdot) = \infty \,1_{\{|\cdot | \le a\}}$.

\section{Inputs from the method of enlargement of obstacles}
\setcounter{equation}{0}

In this section we collect various facts from the method of enlargement of obstacles, see Chapter 4, Sections 1 to 3 of \cite{Szni98a}, which will be employed or underpin some of the results in the next two sections.

\medskip
We begin with an informal description. In a nutshell the method of enlargement of obstacles is a procedure, which for $\ell$ positive real (say bigger than $10$) and a configuration $\o \in \Omega$ attaches in a measurable fashion two disjoint subsets of $\IR^d$ the {\it density set} $\cD_\ell(\o)$ and the {\it bad set} $\cB_\ell(\o)$, so that $\o$ has no point outside $\cD_\ell(\o) \cup \cB_\ell(\o)$:
\begin{equation}\label{2.1}
\o \big(\IR^d\, \backslash \, (\cD_\ell(\o) \cup \cB_\ell(\o))\big) = 0 \; \mbox{and} \; \cD_\ell(\o) \cap \cB_\ell (\o) = \emptyset .
\end{equation}

\n
In the presentation made here $(\log \ell)^{1/d}$ corresponds to the unit scale in Chapter 4 of \cite{Szni98a} and $\ve = (\log \ell)^{-1/d}$ to the small parameter in the same reference. The statements recalled below will hold in the large $\ell$ limit (i.e.~small $\ve$ limit) but uniformly in $\o$.

\medskip
In essence, adding Dirichlet boundary conditions on $\overline{\cD_\ell(\o)}$ (the closure of $\cD_\ell(\o)$) does not increase too much Dirichlet eigenvalues of type $\lambda_{1,\o}(U)$ when they are below $M(\log \ell)^{-2/d}$ (see Theorem \ref{theo2.1} below), and the bad set $\cB_\ell(\o)$ has small relative volume on each box of side-length $(\log \ell)^{1/d}$ in $\IR^d$, see Theorem \ref{theo2.4}. In addition, the sets $\cD_\ell(\o)$ and $\cB_\ell(\o)$ have ``low combinatorial complexity'': their restriction to each cube
\begin{equation}\label{2.2}
C_q = (\log \ell)^{1/d} \,(q + [0,1)^d), \; q \in \IZ^d,
\end{equation}

\n
is a union of disjoint cubes in an $L$-adic decomposition of $C_q$, of size larger than (and of order) $(\log \ell)^{(1 - \gamma)/d}$ in the case of the density set $\cD_\ell(\o)$, and $(\log \ell)^{(1 - \beta)/d}$ in the case of the bad set $\cB_\ell(\o)$, where $0 < \gamma < \beta < 1$. This feature constrains the number of possible shapes of the restriction of $\cD_\ell(\o)$ and of $\cB_\ell(\o)$ to any such cube $C_q$, to at most $2^{(\log \ell)^\gamma}$ in the case of $\cD_\ell(\o)$, and at most $2^{(\log \ell)^\beta}$ in the case of $\cB_\ell(\o)$, see (\ref{2.12}). This reduced combinatorial complexity of the density set and of the bad set underpins the {\it coarse graining} aspect of the method, and its power when bounding the probability of events of a large deviation nature.

\medskip
We now turn to the precise statements that will be helpful for us in the next two sections. One first selects parameters that fulfill the requirements in (3.66), p.~\!181 of \cite{Szni98a} (see also (3.27), p.~\!173 and (3.64), p.~\!180). These parameters are $0 < \alpha < \gamma < \beta < 1$, an integer $L \ge 2$ (entering the $L$-adic decomposition of the boxes $C_q$, $q \in \IZ^d$ in (\ref{2.2})), $\delta > 0$ (entering the definition of the density set), $\rho > 0$ (governing the quality of the eigenvalue estimates), $\kappa > 0$ (governing the local volume of the bad set). As mentioned above they are chosen so as to satisfy (3.66), p.~181 of \cite{Szni98a}. With this choice performed, the results in Chapter 4, Sections 2 and 3 of \cite{Szni98a} apply in the context of the hard spherical obstacles considered here (see (\ref{1.3})). They yield the following statements:

\begin{theorem}\label{theo2.1} {\it (Eigenvalue estimate)}

\medskip\n
For any $M > 0$,
\begin{equation}\label{2.3}
\lim\limits_{\ell \r \infty} \; \sup\limits_{\o, U}\, (\log \ell)^{\rho/d} \, \big[\big\{ \lambda_{1,\o} \big(U \,\backslash \, \overline{\cD_\ell(\o)}\big) (\log \ell)^{2/d} \big\} \wedge M - \{\lambda_{1,\o} (U) (\log \ell)^{2/d} \big\} \wedge M\big] = 0,
\end{equation}
where in the supremum $\o$ runs over $\Omega$ and $U$ over all bounded open sets in $\IR^d$ (and  $\wedge$ refers to the minimum, see the beginning of Section 1).
\end{theorem}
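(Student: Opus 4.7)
Monotonicity of Dirichlet eigenvalues gives the easy inclusion $\lambda_{1,\o}(U \setminus \overline{\cD_\ell(\o)}) \ge \lambda_{1,\o}(U)$, so only the opposite bound with an error of size $o((\log \ell)^{-(2+\rho)/d})$ needs work. After the truncation at level $M$, I may restrict to the regime $\lambda := \lambda_{1,\o}(U) \le M(\log \ell)^{-2/d}$ (otherwise the bracket in (\ref{2.3}) is non-positive, and one is done). Let $\varphi = \varphi_{1,U,\o}$ be the $L^2$-normalised principal eigenfunction on $U_\o$; by (\ref{1.11}) one has the crude bound $\|\varphi\|_\infty \le c_M(\log \ell)^{-1/2}$, uniformly in $(\o, U)$.

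The heart of the argument is a pointwise decay estimate of the form
$$\sup\nolimits_{x \in \overline{\cD_\ell(\o)}} \varphi(x) \le c_M(\log \ell)^{-\rho/d}, \quad \text{for large } \ell, \text{ uniformly in } (\o,U).$$
To establish it, I would iterate the Feynman--Kac type identity (\ref{1.10}) on a short time-scale $t_\ell \asymp (\log \ell)^{2(1-\gamma)/d}$. By the very construction of $\cD_\ell(\o)$ in Chapter 4 of \cite{Szni98a} (together with the parameter choice (3.66), p.~181), Brownian motion started at $x \in \cD_\ell(\o)$ meets $Obs_\o$ before time $t_\ell$ with probability at least $1 - (\log \ell)^{-\rho/d}$; this high local killing probability is precisely the raison d'\^etre of the density set. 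Combining this with Cauchy--Schwarz (as in the derivation of (\ref{1.13})) and the $L^\infty$-bound on $\varphi$, while absorbing the benign factor $e^{\lambda t_\ell} = 1 + o(1)$, yields the desired pointwise decay.

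Granted this decay, I would then perform a standard cutoff. Pick a Lipschitz function $\chi_\ell: \IR^d \to [0,1]$ vanishing on a $(\log \ell)^{(1-\gamma)/d}$-neighbourhood of $\overline{\cD_\ell(\o)}$, equal to $1$ outside a slightly larger neighbourhood, with $|\nabla \chi_\ell| \le c (\log \ell)^{-(1-\gamma)/d}$; the low combinatorial complexity of $\cD_\ell(\o)$ makes this construction quantitatively uniform in $\o$. Then $\chi_\ell \varphi \in H_0^1((U \setminus \overline{\cD_\ell(\o)})_\o)$ is an admissible trial function, and the min-max principle gives
$$\lambda_{1,\o}(U \setminus \overline{\cD_\ell(\o)}) \;\le\; \frac{\tfrac{1}{2}\int |\nabla(\chi_\ell \varphi)|^2\,dx}{\int (\chi_\ell \varphi)^2\,dx}.$$
An IMS-type expansion $\int |\nabla(\chi_\ell \varphi)|^2 = \int \chi_\ell^2 |\nabla \varphi|^2 + \int |\nabla \chi_\ell|^2 \varphi^2 - \int \nabla(\chi_\ell^2) \cdot \varphi \nabla \varphi$, combined with $\int |\nabla \varphi|^2 = 2\lambda$, reduces matters to controlling $\int |\nabla \chi_\ell|^2 \varphi^2\,dx$ and $\int (1-\chi_\ell^2)\varphi^2\,dx$. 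Both integrals live on a thin tube around $\cD_\ell(\o)$ where the pointwise decay applies, and direct bookkeeping yields corrections of size $o((\log \ell)^{-(2+\rho)/d})$. Multiplying through by $(\log \ell)^{(2+\rho)/d}$ and taking the supremum over $(\o, U)$ gives the theorem.

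The main obstacle is the pointwise decay estimate, and in particular its uniformity over arbitrary bounded open $U$ and arbitrary $\o$. A priori $\varphi$ depends on the entire global geometry of $U_\o$, but routing through the \emph{local} hitting-time estimate for Brownian motion strips away this global dependence and reduces matters to the local density of obstacles at scale $(\log \ell)^{(1-\gamma)/d}$ -- which is exactly what the density-set construction encodes. The calibration of the parameters $(\alpha, \gamma, \beta, L, \delta, \rho, \kappa)$ in (3.66), p.~181 of \cite{Szni98a} is what makes both the decay estimate and the subsequent cutoff error deliver precisely the exponent $\rho$, and ensures that the uniformity persists in the $\ell \to \infty$ limit.
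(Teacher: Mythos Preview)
The paper does not give a self-contained proof of Theorem~\ref{theo2.1}; it simply refers to Theorem~2.3, p.~158 of \cite{Szni98a}. Your proposal is therefore being measured against Sznitman's original argument rather than anything in the present text.

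Your overall strategy is natural, but the cutoff step has a genuine gap. By your own construction, the transition shell of $\chi_\ell$ (where $\nabla\chi_\ell \neq 0$) lies \emph{outside} $\overline{\cD_\ell(\o)}$, at distance of order $(\log\ell)^{(1-\gamma)/d}$ from it. Your pointwise decay estimate $\varphi\le c_M(\log\ell)^{-\rho/d}$ is only asserted---and only argued---\emph{on} $\overline{\cD_\ell(\o)}$ itself; on the shell, which may border a clearing, $\varphi$ can be as large as $\|\varphi\|_\infty$, and there is no mechanism in your sketch that propagates the decay outward. Hence you cannot control $\int |\nabla\chi_\ell|^2\varphi^2\,dx$ via the decay bound. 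The only uniform estimate available on the shell is $\int_{\mathrm{shell}}\varphi^2 \le \|\varphi\|_2^2 = 1$, which yields
\[
\int |\nabla\chi_\ell|^2\,\varphi^2\,dx \;\le\; c\,(\log\ell)^{-2(1-\gamma)/d},
\]
and this is far larger than the target $o\big((\log\ell)^{-(2+\rho)/d}\big)$ since $\gamma,\rho>0$. The same obstruction hits the denominator correction $\int(1-\chi_\ell^2)\varphi^2\,dx$. Because $U$ ranges over \emph{all} bounded open sets with no volume control, you also cannot rescue the bookkeeping by bounding the Lebesgue measure of the shell.

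The argument in \cite{Szni98a} bypasses this difficulty by working probabilistically rather than through a spatial cutoff of the eigenfunction: the Feynman--Kac representation is combined with a stopping-time decomposition at the first entrance into the density set, so that the quantitative absorption property of $\cD_\ell(\o)$ (the sharpened form of Lemma~\ref{lem2.2}) enters multiplicatively at the level of path expectations and no integration over an uncontrolled spatial region is ever required. If you wish to repair the variational route, what you would actually need is an $L^2$ estimate of the form $\int_{N_\ell}\varphi^2\,dx \le c\,(\log\ell)^{-(2\gamma+\rho)/d}$ on a full neighborhood $N_\ell$ of $\overline{\cD_\ell(\o)}$, uniformly in $(\o,U)$; obtaining this uniformly is essentially as hard as the theorem itself.
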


\medskip
See Theorem 2.3, p.~158 of \cite{Szni98a} for the proof. In the next sections we will only need the value $M = 2 c_0$ (with $c_0$ from (0.3)). The statement above can actually be extended to arbitrary open sets $U$, but here in (\ref{1.9}) we have only defined $\lambda_{1,\o}(U)$ for bounded open sets $U$, a general enough set-up for our purpose.

\medskip
One also has an estimate, which provides a lower bound on the probability that Brownian motion enters the obstacle set before moving at distance $L(\log \ell)^{(1-\alpha)/d}$:

\begin{lemma}\label{lem2.2} For large $\ell$ one has that for any $\o \in \Omega$ and $x \in \overline{\cD_\ell(\o)}$,
\begin{equation}\label{2.4}
\begin{array}{l}
P_x [ \tau_{L(\log \ell)^{(1-\alpha)/d}} < H_{Obs_{\o}}] \le \frac{1}{2}, \; \mbox{where}
\\[1ex]
\mbox{for $u > 0$, $\tau_u = \inf\{s \ge 0; \,|X_s - X_0|_\infty \ge u \}$ and we recall the notation (\ref{1.3}), and}
\\[0.5ex]
\mbox{$H_{Obs_{\o}}$ is the entrance time in $Obs_\o$, see above (\ref{1.8}).}
\end{array}
\end{equation}
\end{lemma}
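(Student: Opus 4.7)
The plan is to reduce the statement to the very definition of the density set $\cD_\ell(\omega)$ recalled in Chapter 4 of \cite{Szni98a}. In that construction, a point $y$ is placed in $\cD_\ell(\omega)$ precisely when, in the $L$-adic hierarchy of sub-cubes of the enclosing reference cube $C_q$, one can locate a sub-cube $C$ near $y$, of side-length of order at most $L(\log \ell)^{(1-\alpha)/d}$, in which the obstacle mass satisfies a density/capacity condition governed by the parameter $\delta$. Our choice of parameters fulfils (3.66), p.~181 of \cite{Szni98a}, which has exactly been tuned to translate this combinatorial density property into a quantitative Brownian hitting estimate. The target inequality $P_x[\tau_{L(\log \ell)^{(1-\alpha)/d}} < H_{Obs_\omega}] \le \tfrac{1}{2}$ is then a uniform-in-$\omega$ rephrasing of that estimate.

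The steps I would carry out, in order, are the following. First, using the definition of $\cD_\ell(\omega)$ and closing under limits, I would produce, for each $x \in \overline{\cD_\ell(\omega)}$, an $L$-adic sub-cube $C = C(x,\omega)$ with $x$ lying within $L$-adic distance $\le L(\log \ell)^{(1-\alpha)/d}/2$ of $C$, such that $C$ together with its immediate neighbourhood is contained in the $\ell_\infty$-ball $B_\infty(x, L(\log \ell)^{(1-\alpha)/d})$. Second, I would note that the time $\tau_{L(\log \ell)^{(1-\alpha)/d}}$ exceeds the first hit of $C$, so by the strong Markov property it suffices to bound from below, uniformly in the entrance point in $C$, the probability of hitting $Obs_\omega \cap C$ before exiting from a larger concentric enlargement still contained in $B_\infty(x, L(\log \ell)^{(1-\alpha)/d})$. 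Third, I would invoke the capacity/volume lower bound on the obstacle set inside $C$ provided by the density condition, together with the standard comparison between Brownian hitting probability and Newtonian capacity (Proposition 3.2 and the accompanying estimates in Chapter~2 of \cite{Szni98a}), to show that this hitting probability is at least some constant $c_\delta > \tfrac{1}{2}$.

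The main obstacle is technical rather than conceptual: one needs to see that the parameters $(\alpha,\gamma,\beta,L,\delta,\rho,\kappa)$ selected via (3.66) on p.~181 of \cite{Szni98a} indeed force the resulting hitting probability bound to be strictly better than $\tfrac{1}{2}$ (rather than merely bounded away from $0$). This is precisely what the chosen $L$-adic scales and the smallness of $\alpha$ relative to $1$ ensure, because the obstacle set inside $C$ has capacity comparable to that of a full ball of the same size (up to $\delta$), while the enclosing box is only larger by a bounded multiplicative factor. With the choice of $L$ large enough (as is already imposed by (3.66)), the balayage estimate then yields the constant $\tfrac{1}{2}$ in the large-$\ell$ limit, and the conclusion follows uniformly in $\omega$ and $x \in \overline{\cD_\ell(\omega)}$.
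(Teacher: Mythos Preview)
Your sketch is broadly in the spirit of the argument in \cite{Szni98a}, but note that the paper itself does not give a proof of this lemma: it simply refers to Lemma~2.1, p.~154 (and (2.19)$'$, p.~157) of \cite{Szni98a}, remarking that the quoted reference in fact proves a \emph{much stronger} estimate, of which (\ref{2.4}) is an immediate weakening.

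This last point resolves the ``main obstacle'' you flag. You worry that the parameter tuning in (3.66) of \cite{Szni98a} must be checked to yield a constant strictly better than $\tfrac{1}{2}$ rather than merely bounded away from $0$. In fact, the estimate proved in \cite{Szni98a} (specifically (2.19)$'$) gives a bound on $P_x[\tau_{L(\log \ell)^{(1-\alpha)/d}} < H_{Obs_\omega}]$ that tends to $0$ as $\ell \to \infty$, uniformly in $\omega$ and $x \in \overline{\cD_\ell(\omega)}$; this comes from iterating the one-step capacity/hitting bound across the many $L$-adic scales between $(\log\ell)^{(1-\gamma)/d}$ and $(\log\ell)^{(1-\alpha)/d}$, not from a single step. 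So getting $\le \tfrac{1}{2}$ for large $\ell$ is automatic, and no delicate tracking of constants is needed. Your outline---extract from the density-set definition a nearby $L$-adic cube where the obstacle set has substantial relative capacity, then turn that into a hitting bound via standard capacity/hitting-probability comparisons---is the correct mechanism at a single scale; the full strength in the reference comes from the multi-scale iteration, which your sketch does not mention but which is also not needed for the weak form (\ref{2.4}) stated here.

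One minor inaccuracy: your second step asserts that ``$\tau_{L(\log \ell)^{(1-\alpha)/d}}$ exceeds the first hit of $C$''. This is not generally true if $C$ is merely near $x$; what you actually want is that $x$ lies in (the closure of) one of the $L$-adic cubes constituting $\cD_\ell(\omega)$, so Brownian motion starts in $\overline{C}$, and the relevant comparison is between hitting $Obs_\omega$ inside the associated skeleton boxes and exiting the $|\cdot|_\infty$-ball of radius $L(\log\ell)^{(1-\alpha)/d}$. With that correction, your route and the reference's coincide.
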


For the proof, see Lemma 2.1, p.~154 (and (2.19)', p.~157) of \cite{Szni98a}. One actually has a much stronger estimate in the quoted reference, but (\ref{2.4}) will suffice for our purpose.

\medskip
The next result that we quote corresponds to the case of
\begin{equation}\label{2.5}
\mbox{$U_2 \supseteq U_1$ bounded open sets and $\cA$ a closed set in $\IR^d$},
\end{equation}
so that outside of $\cA \cup\overline{\cD_\ell(\o)}$, $U_2$ has ``small relative volume'' in all boxes $C_q$, $q \in \IZ^d$, namely, one has $r > 0$ with
\begin{equation}\label{2.6}
\sup\limits_{q \in \IZ^d} \,| (U_2 \, \backslash \, (\cA \cup\overline{\cD_\ell(\o)} )| \cap C_q | < r^d,
\end{equation}
and in addition, one has $R > 0$ so  that $U_1$ contains the trace on $U_2$ of an $R$-neighborhood of $\cA \cap U_2$ for the supremum distance, that is 
\begin{equation}\label{2.7}
d_\infty (U_2 \, \backslash \, U_1, \, \cA \cap U_2) \ge R.
\end{equation}
The next theorem provides a setting in which $\lambda_{1,\o}(U_1)$ is not much bigger than $\lambda_{1,\o}(U_2)$. Once again, only the choice $M = 2 c_0$ will be used in Sections 3 and 4.

\begin{theorem}\label{theo2.3}
For any $M > 0$ there exists constants $c_3(d) > 0$, $c_4(d,M) \in (1,\infty)$, $r_0(d,M) \in (0, \frac{1}{4})$ such that 
\begin{equation}\label{2.8}
\lim\limits_{\ell \r \infty} \; \wt{\sup} \; \exp\big\{ c_3 \big[ \mbox{\f $\dis\frac{R}{4r}$}\big] \big\} [\{\lambda_{1,\o} (U_1) (\log \ell)^{2/d}\} \wedge M - \{\lambda_{1,\o} (U_2) (\log \ell)^{2/d} \} \wedge M] \le 1,
\end{equation}
where $\wt{\sup}$ denotes the supremum over all $\o \in \Omega$, $U_2 \supseteq U_1$, $\cA$, $R > 0$, $r > 0$, such that (\ref{2.5})~-~(\ref{2.7}) hold and
\begin{align}
& L(\log \ell)^{(1-\alpha)/d} < r < r_0 (\log \ell)^{1/d} \; \mbox{(recall $L$ governs the $L$-adic decomposition)}, \label{2.9}
\\[1ex]
&\mbox{\f $\dis \frac{R}{4r} $} > c_4. \label{2.10}
\end{align}
\end{theorem}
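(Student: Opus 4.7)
The plan is to establish an upper bound on $\lambda_{1,\o}(U_1) - \lambda_{1,\o}(U_2)$ through a propagation estimate showing that Brownian motion starting in $\cA \cap U_{2,\o}$ is exponentially unlikely in $R/r$ to exit $U_{1,\o}$ through the ``interior boundary'' $\partial U_1 \cap U_2$ without being first killed by the obstacle set. First I would set up the eigenvalue comparison variationally: let $\varphi_2 \ge 0$ be an $L^2$-normalized principal eigenfunction for $\lambda_2 = \lambda_{1,\o}(U_2)$ in $U_{2,\o}$, and build a test function $u = \chi \varphi_2 \in H^1_0(U_{1,\o})$, where $\chi$ is a Lipschitz cutoff equal to $1$ on $\cA \cap U_2$, vanishing outside an $R$-neighborhood of $\cA \cap U_2$ in $U_2$, with $|\nabla \chi| \le c/R$; by (\ref{2.7}) such $\chi$ is supported in $U_1$. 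Integration by parts against $-\tfrac{1}{2}\Delta \varphi_2 = \lambda_2 \varphi_2$ yields $\tfrac{1}{2}\int|\nabla u|^2 = \lambda_2 \int \chi^2 \varphi_2^2 + \tfrac{1}{2}\int \varphi_2^2 |\nabla \chi|^2$, so that by the Rayleigh quotient $\lambda_{1,\o}(U_1) - \lambda_2 \le \tfrac{1}{2}\int \varphi_2^2 |\nabla \chi|^2 / \int \chi^2 \varphi_2^2$, reducing matters to showing that the ``leakage'' $\int_{U_1 \setminus \cA} \varphi_2^2$ is exponentially small in $R/r$.

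The core step is to control the hitting probability $P_x[T_{U_{1,\o}} < T_{U_{2,\o}},\, X_{T_{U_{1,\o}}} \in \partial U_1 \cap U_2]$ for $x \in \cA \cap U_{2,\o}$ by iterating a one-shell argument. I would decompose the region between $\cA \cap U_2$ and $\partial U_1 \cap U_2$ into $N = \lfloor R/(4r) \rfloor$ concentric shells of sup-width $\sim r$. By (\ref{2.6}), inside each cube $C_q$ the free set $C_q \setminus (\cA \cup \overline{\cD_\ell(\o)})$ has Lebesgue volume at most $r^d < r_0^d \log \ell = r_0^d |C_q|$, a small fraction of the cube. A Chebyshev / heat-kernel ``time-spent'' argument then shows that Brownian motion trying to cross one shell either enters $\overline{\cD_\ell(\o)}$ with probability close to $1$, or slips through the thin free set with probability strictly $<1$ depending on $r_0$. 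In the former case Lemma~\ref{lem2.2} adds a probability $\ge 1/2$ of being killed before the next sup-move of $L(\log \ell)^{(1-\alpha)/d} < r$, i.e., before completing the crossing. Combining, each shell is crossed alive with probability at most $e^{-c_3}$ for some dimension-dependent $c_3 > 0$, and the strong Markov property over the $N$ shells gives the bound $\exp\{-c_3 N\}$ on the hitting probability.

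To plug this into the eigenvalue comparison, Lemma~\ref{lem1.1} controls $\|\varphi_2\|_\infty \le c_1 \lambda_2^{d/4}$, which under the truncation $\lambda_2 (\log \ell)^{2/d} \le M$ is at most a polynomial in $\log \ell$. A Feynman--Kac representation of $\varphi_2$ then bounds $\int_{U_1 \setminus \cA} \varphi_2^2 \le \|\varphi_2\|_\infty^2 \cdot |U_1| \cdot \exp\{-c_3 N\}$, and in turn $\int_{\cA \cap U_2} \varphi_2^2 \ge 1 - o(1)$. With $|\nabla \chi|^2 \le (c/R)^2$, the Rayleigh bound becomes $(\lambda_{1,\o}(U_1) - \lambda_2)(\log \ell)^{2/d} \le C (\log \ell)^{p} \exp\{-c_3 N\}$ for some polynomial factor in $\log \ell$; the largeness condition (\ref{2.10}), $R/(4r) > c_4$, absorbs this polynomial into a slight weakening of $c_3$, and the truncation by $M$ on the other side yields the $\min\{\cdot, M\}$ form in (\ref{2.8}).

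The main obstacle will be the uniformity of the one-shell estimate in the shape of $\cD_\ell(\o)$ and $\cA$, which must hold over all admissible $\o, U_1, U_2$ simultaneously. The two-sided scale bound (\ref{2.9}) on $r$ is used in opposite ways: $r < r_0 (\log \ell)^{1/d}$ forces the free set in each cube to have density $<r_0^d$, enabling the Chebyshev-type argument that confines the path into $\overline{\cD_\ell(\o)}$; while $r > L(\log \ell)^{(1-\alpha)/d}$ ensures that the killing time scale from Lemma~\ref{lem2.2} is shorter than the shell crossing distance $r$. Without the $M$-truncation, a very large $\lambda_2$ could concentrate $\varphi_2$ on a scale incompatible with the propagation argument, which is why the sup-norm control from Lemma~\ref{lem1.1} is essential at the final step.
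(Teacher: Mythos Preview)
Your overall strategy---propagation through concentric shells using the small-volume condition (\ref{2.6}) to force Brownian motion into $\overline{\cD_\ell(\o)}$, followed by Lemma~\ref{lem2.2} for killing---is correct and is indeed the mechanism behind the cited Theorem~2.6 in \cite{Szni98a}. The identification of the roles of the two-sided bound (\ref{2.9}) on $r$ is also accurate.

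The gap is in the execution via the Rayleigh quotient with the cutoff $\chi\varphi_2$. Your final bound carries a factor $|U_1|$ (and implicitly $|U_2|$ when you claim $\int_{\cA\cap U_2}\varphi_2^2\ge 1-o(1)$), but the supremum $\wt{\sup}$ in (\ref{2.8}) runs over \emph{all} bounded open $U_1\subseteq U_2$ satisfying (\ref{2.5})--(\ref{2.7}); nothing prevents $|U_2|$ from being of order $\ell^d$ (this is exactly how the theorem is applied later, e.g.\ with $U_2=B_\ell$). So the prefactor is not ``polynomial in $\log\ell$'' and condition (\ref{2.10}) cannot absorb it. A second, smaller issue: the pointwise decay of $\varphi_2$ away from $\cA$ scales with $d_\infty(x,\cA)/r$, not with $R/r$, so the bound $\int_{U_1\setminus\cA}\varphi_2^2\le\|\varphi_2\|_\infty^2\,|U_1|\,e^{-c_3N}$ is wrong near the inner edge of the annulus; this is fixable by supporting $\nabla\chi$ only on the outer half of the $R$-collar, but the volume issue remains.

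The proof in \cite{Szni98a} avoids this by not passing through an $L^2$ integral over $U_1$ or $U_2$. Instead one compares the killed semigroups $r_{U_1,\o}(t,\cdot,\cdot)$ and $r_{U_2,\o}(t,\cdot,\cdot)$ directly: the Feynman--Kac difference involves only paths that exit $U_{1,\o}$ through $\partial U_1\cap U_2$ before time $t$, and your shell estimate bounds this \emph{pointwise} in the starting point, uniformly in $\o,U_1,U_2$. Translating the kernel bound into the eigenvalue gap via a sup-norm or operator-norm argument (rather than an $L^2$ inner product) then produces no volume factor. The same shell machinery you set up works; it is the variational packaging that needs to be replaced.
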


For the proof we refer to Theorem 2.6, p.~164 of \cite{Szni98a}. Concerning the volume estimate for the bad set, one has

\begin{theorem}\label{theo2.4}
\begin{equation}\label{2.11}
\lim\limits_{\ell \r \infty} \; \sup\limits_{q \in \IZ^d, \o \in \Omega} (\log \ell)^{\kappa/d} \; \mbox{\f $\dis\frac{|\cB_\ell (\o) \cap C_q|}{|C_q|} $}= 0.
\end{equation}
\end{theorem}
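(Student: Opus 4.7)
The estimate (\ref{2.11}) is Theorem 2.4 of Chapter 4 of \cite{Szni98a}, so the plan is to recall the inductive $L$-adic construction of $\cB_\ell(\o)$ and to extract the uniform-in-$\o$ volume bound from a combinatorial counting argument that uses the admissibility conditions on $(\alpha,\gamma,\beta,L,\delta,\rho,\kappa)$ imposed in (3.66), p.~181 of \cite{Szni98a}.

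Inside each cube $C_q$ the sets $\cD_\ell(\o)$ and $\cB_\ell(\o)$ are built by a top-down $L$-adic refinement. Starting from $C_q$ at scale $(\log \ell)^{1/d}$, one recursively partitions each current sub-cube into $L^d$ children of linear size reduced by a factor $L$, and applies a density test to each child: a child is frozen and added to $\cD_\ell(\o)$ whenever its local obstacle count exceeds a threshold proportional to $\delta$ times its volume. Unfrozen children are processed at the next finer scale. The refinement of $\cD_\ell(\o)$ stabilises at scale of order $(\log \ell)^{(1-\gamma)/d}$, while the refinement is continued further down to the scale $(\log \ell)^{(1-\beta)/d}$ in order to build $\cB_\ell(\o)$: the sub-cubes at this finest scale that have never been frozen by the density criterion are declared \emph{bad} and form $\cB_\ell(\o)\cap C_q$.

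Since each bad sub-cube has volume $(\log \ell)^{1-\beta}$ and $|C_q|=\log \ell$, the uniform volume estimate reduces to a uniform bound on the number $N_q(\o)$ of bad sub-cubes of $C_q$. The key point is that the parameter relations (3.66) in \cite{Szni98a} are tailored so that being unfrozen at every level of the tree forces a definite deficit in the local obstacle count relative to $\delta$; propagating this deficit through the at most $O(\log \log \ell)$ scales separating $C_q$ from the finest one yields
\[
N_q(\o)\,(\log \ell)^{1-\beta}\;\le\;|C_q|\,(\log \ell)^{-(\kappa+o(1))/d},
\]
uniformly in $q\in\IZ^d$ and $\o\in\Omega$, which is exactly (\ref{2.11}).

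The main obstacle I expect lies in the combinatorial bookkeeping through the refinement tree and in verifying that the conditions (3.66) indeed translate into the exponent $\kappa/d$ in the final bound (rather than a weaker one). Uniformity in $\o$ is built into the argument, since the whole notion of being ``bad'' is expressed via deterministic thresholds applied to the given configuration, so no probabilistic input is needed. As this is precisely the content of Chapter~4, Sections~2 and~3 of \cite{Szni98a}, the detailed induction would be referenced there rather than rewritten.
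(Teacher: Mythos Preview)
The paper itself gives no proof here: it simply cites Theorem~3.6, p.~181 of \cite{Szni98a} (your pointer to ``Theorem~2.4 of Chapter~4'' is not the reference the paper uses). Since your proposal also ultimately defers to that book, the two approaches coincide in substance.

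That said, the sketch you offer of the construction contains a real inaccuracy. You describe $\cB_\ell(\o)\cap C_q$ as consisting of \emph{all} finest-scale sub-cubes that were never frozen by the density criterion. If that were the definition, $\cD_\ell(\o)\cup\cB_\ell(\o)$ would tile the whole of $C_q$, and the complement $C_q\setminus(\cD_\ell(\o)\cup\cB_\ell(\o))$ would be empty. This contradicts (\ref{2.1}) only in spirit, but it directly contradicts the way the paper uses these sets in Section~3 (see e.g.\ (\ref{3.11}), (\ref{3.20}), (\ref{3.22})), where $|B\setminus(\cD_\ell(\o)\cup\cB_\ell(\o))|$ is a nontrivial quantity of order $\log\ell$. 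In the actual construction of \cite{Szni98a}, a finest-scale sub-cube is declared bad only if it \emph{contains points of} $\o$ yet fails the density test at every level; unfrozen boxes free of obstacles are neither density nor bad, and this is precisely what makes (\ref{2.1}) hold. Correspondingly, the smallness of $|\cB_\ell(\o)\cap C_q|$ is not driven by a ``deficit in local obstacle count'' propagated down the tree, but by the fact that a bad box carries obstacles whose capacitary effect is anomalously small at every scale, and the Wiener-type criterion underlying the density test controls how many such boxes can coexist. Since you hand the details back to \cite{Szni98a} anyway this does not invalidate your plan, but the heuristic as written would mislead a reader about both the definition and the mechanism.
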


For the proof, see Theorem 3.6, p.~181 of \cite{Szni98a}.

\medskip
As for the combinatorial complexity of the density set and the bad set, one has
\begin{equation}\label{2.12}
\begin{array}{l}
\mbox{for each $\ell \ge 10$, $q \in \IZ^d$ and $\o \in \Omega$, the sets $C_q \cap \cD_\ell(\o)$ and $C_q \cap \cB_\ell(\o)$ take}
\\
\mbox{at most $2^{(\log \ell)^\beta}$ possible shapes}.
\end{array}
\end{equation}

\n
These shapes corresponds to the various unions of $L$-adic subboxes of $C_q$ of the same size, which is bigger or equal to $(\log \ell)^{(1 - \gamma)/d}$ in the case of the density set, see (2.7) and (2.13), pp.~\!151-152 of \cite{Szni98a}, and bigger or equal to $(\log \ell)^{(1- \beta)/d}$ in the case of the bad set, see (3.43)~-~(3.46), p.~\!177 of the same reference.

\section{Setting up typical configurations}
\setcounter{equation}{0}

In this section we collect some first consequences of the method of enlargement of obstacles. We will introduce an event depending on $\ell$, of high probability as $\ell \r \infty$, see (\ref{3.24}), (\ref{3.25}), which will encapsulate the nature of ``typical configurations'', and will be very convenient in the analysis of the first and second Dirichlet eigenvalues $\lambda_{1,\o} (B_\ell)$ and $\lambda_{2,\o}(B_\ell)$ for large $\ell$, see (\ref{1.9}), (\ref{1.5}) for notation. With the exception of Lemma \ref{lem3.1}, we mainly collect here results from Section 4 in Chapter 4 of \cite{Szni98a}, which although written in the context of soft obstacles remains valid in the (simpler) context of hard spherical obstacles. We recall that $a > 0$ denotes the radius of these spheres, $\nu > 0$ the intensity of the Poisson point process, and we have selected a fixed choice of admissible parameters $0 < \alpha < \gamma < \beta < 1$, $L \ge 2$ integer, $\delta > 0$, $\rho > 0$, $\kappa > 0$ that satisfy the requirements in (3.66), p.~181 of \cite{Szni98a} so that the results stated in the previous section apply. We further choose as in (4.41), p.~189 of \cite{Szni98a}
\begin{equation}\label{3.1}
\beta' \in (\beta, 1) .
\end{equation}
Unless otherwise specified, the positive constants will implicitly depend on the dimension $d$ and the above parameters, as explained at the end of the Introduction. We recall that throughout we assume $d \ge 2$ and $\ell > 10$. Corresponding to $r$ in (4.22), p.~186 of \cite{Szni98a}, we have a (``small enough'', see the quoted reference)
\begin{equation}\label{3.2}
r_1(d,\nu) \in \big(0, r_0(d, M = 2c_0)\big)
\end{equation}
(with $r_0$ as in Theorem \ref{theo2.3}) and we define $R_1(d,\ell,\nu)$ corresponding to $R$ in (4.23), p.~186 of \cite{Szni98a}, as the smallest positive integer for which, in the notation of Theorem \ref{theo2.3}, 
\begin{equation}\label{3.3}
\mbox{\f $\dis\frac{R_1}{4r_1}$} > c_4 (d, M = 2c_0) \; \mbox{and} \; c_3(d) \; \big[\mbox{\f $\dis\frac{R_1}{4r_1}$}\big] \ge 3 \log \log \ell .
\end{equation}
We then define the random open set
\begin{equation}\label{3.4}
\begin{split}
O = & \; \mbox{the open $R_1(\log \ell)^{1/d}$-neighborhood for the $| \cdot |_\infty$-norm of the union of boxes}
\\
& \; \mbox{$C_q$ (see (\ref{2.2})) for which $|C_q \, \backslash \, \cD_\ell(\o) | \ge r_1^d (\log \ell)$} .
\end{split}
\end{equation}

\n
(In the terminology of \cite{Szni98a} the boxes $C_q$ satisfying the above condition are the so-called {\it clearing boxes}).

\medskip
\begin{samepage}
Then, by (4.25), (4.26) in Proposition 4.2, p.~186 of \cite{Szni98a}, we have a constant 
\begin{equation}\label{3.5}
\gamma_1 (d,\nu) > 0,
\end{equation}
such that setting for $\ell > 10$
\begin{equation}\label{3.6}
\begin{split}
\cC_\ell =& \;\mbox{the collection of boxes $B = (\log \ell)^{1/d} \big(q + (0, [\gamma_1 \log \log \ell])^d\big),\; q \in \IZ^d$}
\\[-0.5ex]
& \;\mbox{that intersect $B_\ell$},
\end{split}
\end{equation}
the event
\begin{equation}\label{3.7}
\begin{split}
C =& \;\mbox{$\{\o \in \Omega$; all connected components of $O$ intersecting $B_\ell$ are contained},
\\[-0.5ex]
&\; \;\,\mbox{in some $B \in \cC_\ell\}$},
\end{split}
\end{equation}
satisfies
\begin{equation}\label{3.8}
\IP[C] \ge 1 - \ell^d, \; \mbox{for large $\ell$}.
\end{equation}
\end{samepage}

\n
Moreover, see Proposition 4.3, p.~188 of \cite{Szni98a}, one can choose a constant $\gamma_2(d,\nu,a) > 0$ such that the event
\begin{equation}\label{3.9}
\begin{array}{l}
F = \\
\Big\{\o \in \Omega; \mbox{\f $\dis\frac{c_0}{(\log \ell)^{2/d}}$} + \mbox{\f $\dis\frac{\gamma_2}{(\log \ell)^{3/d}}$} \ge \inf\limits_{B \in \cC_\ell} \lambda_{1,\o} (B \cap B_\ell) \ge   \lambda_{1,\o}(B_\ell)  \ge \! \inf\limits_{B \in \cC_\ell} \lambda_{1,\o} (B \cap B_\ell) - \delta_\ell\Big\},
\\[2ex]
\;\mbox{with $\delta_\ell = (\log \ell)^{-(2 + 2/d)}$},
\end{array}
\end{equation}
satisfies
\begin{equation}\label{3.10}
\IP [F] \ge 1 - 2 \ell^{-d}, \; \mbox{for large $\ell$}.
\end{equation}

\medskip\n
Further, see (4.42), p.~189 of \cite{Szni98a}, we introduce the event (see Section 2 for notation): 
\begin{equation}\label{3.11}
G = \big\{\o \in \Omega; \sup\limits_{B \in \cC_\ell} \big| B \, \backslash \, \big(\cD_\ell(\o) \cup \cB_\ell(\o)\big)\big| \le \mbox{\f $\dis\frac{d}{\nu}$} \;(\log \ell) + (\log \ell)^{\beta'}\big\}.
\end{equation}
Then, by Lemma 4.4, p.~189 of \cite{Szni98a}, we have
\begin{equation}\label{3.12}
\IP [G] \ge 1 - \exp\big\{- \mbox{\f $\dis\frac{\nu}{2}$} \;(\log \ell)^{\beta'}\big\}, \; \mbox{for large $\ell$}.
\end{equation}

\n
We also wish to discard the boxes in $\cC_\ell$ that are too close to the boundary of $B_\ell$. To this effect, with $\gamma_1, \gamma_2$ as in (\ref{3.5}) and (\ref{3.9}) above, we define the event
\begin{equation}\label{3.13}
\begin{split}
E = & \; \{\mbox{for all $B \in \cC_\ell$ such that $d_\infty (B, B_\ell^c) \le 100 \,[\gamma_1 \log \log \ell ](\log \ell)^{1/d}$, one has}
\\
& \; \; \lambda_{1,\o} (B) > c_0 (\log \ell)^{-2/d} + 100 \gamma_2 (\log \ell)^{-3/d}\},
\end{split}
\end{equation}
and introduce the sub-collections of {\it interior boxes} and {\it boundary boxes} in $\cC_\ell$:
\begin{equation}\label{3.14}
\begin{array}{l}
\cC^{\rm int}_\ell = \{B \in \cC_\ell, d_\infty(B, B^c_\ell) > 100 \,[\gamma_1 \log \log \ell ] (\log \ell)^{1/d}\} \;\mbox{and}
\\[1ex]
\cC_\ell^{\rm bound} = \cC_\ell \, \backslash \, \cC_\ell^{\rm int}.
\end{array}
\end{equation}
One then has

\begin{lemma}\label{lem3.1}
\begin{equation}\label{3.15}
\mbox{For large $\ell$, $\IP[E] \ge 1 - \exp\big\{ - \mbox{\f $\dis\frac{\nu}{2}$} \;(\log \ell)^{\beta'}\big\}$}.
\end{equation}
\end{lemma}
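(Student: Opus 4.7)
The plan is to reduce to a single-box estimate by union bound and then combine the combinatorial complexity (\ref{2.12}) with a Poisson no-point probability. Since each $B \in \cC_\ell$ has side $[\gamma_1\log\log\ell](\log\ell)^{1/d}$ and the boundary boxes of $\cC_\ell^{\rm bound}$ lie in a slab of $|\cdot|_\infty$-thickness $100[\gamma_1\log\log\ell](\log\ell)^{1/d}$ around $\partial B_\ell$, the cardinality $|\cC_\ell^{\rm bound}|$ is of order $\ell^{d-1}(\log\log\ell)^{-(d-1)}(\log\ell)^{-(d-1)/d}$, in particular at most $\ell^{d-1}$ for large $\ell$. Writing $\lambda_* = c_0(\log\ell)^{-2/d} + 100\gamma_2(\log\ell)^{-3/d}$, it thus suffices to show that uniformly in $B \in \cC_\ell^{\rm bound}$ one has $\IP[\lambda_{1,\o}(B) \le \lambda_*] \le \exp(-d\log\ell + o(\log\ell))$, since union-bounding by $\ell^{d-1}$ then yields $\IP[E^c] \le \exp(-\log\ell + o(\log\ell))$, which is smaller than $\exp\{-\frac{\nu}{2}(\log\ell)^{\beta'}\}$ for large $\ell$ as soon as $\beta' < 1$.

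For this single-box estimate I would first apply Theorem \ref{theo2.1} with $M = 2c_0$: since $\lambda_* < 2c_0(\log\ell)^{-2/d}$ for large $\ell$, on the event $\{\lambda_{1,\o}(B) \le \lambda_*\}$ one has $\lambda_{1,\o}(B \setminus \overline{\cD_\ell(\o)}) \le \lambda_* + \eta_\ell$ with $\eta_\ell = o((\log\ell)^{-(2+\rho)/d})$. Classical Faber-Krahn applied to the connected component of $(B \setminus \overline{\cD_\ell(\o)}) \setminus Obs_\o$ realizing this eigenvalue then produces a component of Lebesgue measure at least $\omega_d(\lambda_d/(\lambda_* + \eta_\ell))^{d/2}$. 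Using the identity $\omega_d(\lambda_d/c_0)^{d/2} = d/\nu$ that follows from (\ref{0.3}) and a first-order expansion, this lower bound simplifies to $V_0 := (d/\nu)\log\ell - O((\log\ell)^{(d-1)/d})$. Since this obstacle-free component sits in $B \setminus \overline{\cD_\ell(\o)}$, which (up to a null set) equals the disjoint union $(\cB_\ell(\o) \cap B) \cup (B \setminus (\cD_\ell(\o) \cup \cB_\ell(\o)))$, and $|\cB_\ell(\o) \cap B| = o(\log\ell)$ by Theorem \ref{theo2.4}, one concludes that on $\{\lambda_{1,\o}(B) \le \lambda_*\}$ one has $|B \setminus (\cD_\ell(\o) \cup \cB_\ell(\o))| \ge V_0 - o(\log\ell)$.

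The single-box bound then follows by enumerating the possible shapes $T \subseteq B$ of $(\cD_\ell(\o) \cup \cB_\ell(\o)) \cap B$. By (\ref{2.12}) and the fact that $B$ contains $[\gamma_1\log\log\ell]^d$ unit boxes $C_q$, there are at most $\exp(C(\log\log\ell)^d(\log\ell)^\beta) = \exp(o(\log\ell))$ such shapes. For each shape $T$ with $|B \setminus T| \ge V_0 - o(\log\ell)$, the inclusion $\{(\cD_\ell(\o) \cup \cB_\ell(\o)) \cap B = T\} \subseteq \{\o(B \setminus T) = 0\}$ supplied by (\ref{2.1}) gives
\begin{equation*}
\IP\bigl[(\cD_\ell(\o) \cup \cB_\ell(\o)) \cap B = T\bigr] \le \exp(-\nu|B \setminus T|) \le \exp(-\nu V_0 + o(\log\ell)) = \exp(-d\log\ell + o(\log\ell)).
\end{equation*}
Summing over the $\exp(o(\log\ell))$ allowable shapes preserves this order, yielding the desired single-box estimate.

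The hard part will be executing the combinatorial enumeration cleanly: while (\ref{2.12}) bounds the number of shapes, the shape of $\cD_\ell(\o) \cup \cB_\ell(\o)$ on $B$ is itself a measurable function of $\o$, so the inequality $\IP[(\cD_\ell \cup \cB_\ell) \cap B = T] \le \exp(-\nu|B \setminus T|)$ has to be justified through the supporting property (\ref{2.1}) rather than by literal conditioning on independent data. This is precisely the technical ingredient already present in the proof of Proposition 4.3 on p.~188 of \cite{Szni98a}, so no new tool is needed beyond what has been recorded in Section 2.
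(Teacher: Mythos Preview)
Your proposal is correct and follows essentially the same route as the paper: reduce $E^c$ via Theorem \ref{theo2.1}, Faber-Krahn, and Theorem \ref{theo2.4} to a volume lower bound on $|B \setminus (\cD_\ell(\o)\cup\cB_\ell(\o))|$, then control the latter by enumerating shapes via (\ref{2.12}) and using the Poisson void probability, and finish with a union bound over $|\cC_\ell^{\rm bound}|\le c\,\ell^{d-1}$. The only cosmetic difference is that the paper packages the argument through an intermediate event $\wt{E}$ with the volume threshold $\frac{d-1}{\nu}\log\ell+(\log\ell)^{\beta'}$ (chosen so the $\ell^{d-1}$ factor cancels exactly and the $(\log\ell)^{\beta'}$ term survives), whereas you use the larger threshold $\frac{d}{\nu}\log\ell-o(\log\ell)$ and obtain the stronger intermediate bound $\IP[E^c]\le \ell^{-1+o(1)}$ before concluding.
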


\begin{proof}
We define the event
\begin{equation}\label{3.16}
\wt{E} = \big\{ \o \in \Omega; \; \mbox{for all} \; B \in \cC_\ell^{\rm bound}, \big| B \, \backslash \, \big(\cD_\ell(\o) \cup \cB_\ell(\o)\big)\big| < \mbox{\f $\dis\frac{d-1}{\nu}$} \; (\log \ell) + (\log \ell)^{\beta'}\big\}.
\end{equation}

\n
We will show that for large $\ell$, the event $\wt{E}$ has high probability, and one has the inclusion $\wt{E} \subseteq E$. With this in mind, we first note that as in (4.44), p.~189 of \cite{Szni98a}, for large $\ell$ and any $B \in \cC_\ell$, one has:
\begin{equation}\label{3.17}
\begin{array}{l}
\IP \big[  \big| B \, \backslash \, \big(\cD_\ell(\o) \cup \cB_\ell(\o)\big) \big| \ge \mbox{\f $\dis\frac{d-1}{\nu}$} \;(\log \ell) + (\log \ell)^{\beta'} \big] \le
\\[1ex]
2^{2 \gamma_1^d (\log \log \ell)^d (\log \ell)^\beta} \,\exp\{ - (d-1) \, \log \ell - \nu(\log \ell)^{\beta'}\}.
\end{array}
\end{equation}

\n
Then, by a union bound and the fact that $|\cC_\ell^{\rm bound}| \le 2d \, \ell^{d-1}$ for large $\ell$, we find that
\begin{equation}\label{3.18}
\IP [ \wt{E}^c] \le |\cC_\ell^{\rm bound} | \, 2^{2 \gamma_1^d (\log \log \ell)^d (\log \ell)^{\beta}} \;\ell^{-(d-1)} \,e^{-\nu(\log \ell)^{\beta'}} \le e^{-\frac{\nu}{2} \,(\log \ell)^{\beta'}}, \;\mbox{for large $\ell$}.
\end{equation}
As we now explain,
\begin{equation}\label{3.19}
\mbox{for large $\ell$, $\wt{E} \subseteq E$}.
\end{equation}

\n
Indeed, when $\ell$ is large, then for any $\o \in \wt{E} \backslash E$, one can find $B \in \cC_\ell^{\rm bound}$ such that $\lambda_{1,\o}(B) \le c_0 (\log \ell)^{-2/d} + 100 \gamma_2 (\log \ell)^{-3/d}$, so that with $\rho$ as in Theorem \ref{theo2.1}, $\lambda_{1,\o}(B) + (\log \ell)^{-(2 + \rho)/d} < 2c_0 (\log \ell)^{-2/d}$. Thus by (\ref{2.3}) with $M = 2c_0$, for large $\ell$, for any $\o \in \wt{E} \backslash E$, one can find $B \in \cC_\ell^{\rm bound}$ such that $\lo (B) \le c_0 \lol^{-2/d} + 100 \gamma_2 \,\lol^{-3/d}$ and
\begin{equation}\label{3.20}
\begin{split}
\lambda_{1,\o} (B)  & \ge \lambda_{1,\o} \big(B \, \backslash \, \overline{\cD_\ell(\o)} \big) - (\log \ell)^{-(2 + \rho)/d}
\\[1ex]
&\hspace{-4.1ex} \stackrel{\rm Faber-Krahn}{\ge} \lambda_d \big\{\o_d \, / \, \big| B\, \backslash \, \overline{\cD_\ell(\o)} \big|\big\}^{2/d} - (\log \ell)^{-(2 + \rho) / d} 
\\[1ex]
&\!\!\!\! \stackrel{(\ref{2.1})}{=} \lambda_d \big\{\o_d  \, / \, \big(\big| B \, \backslash \, \big(\cD_\ell(\o) \cup \cB_\ell(\o)\big) | + |B \cap \cB_\ell(\o)|\big)\big\}^{2/d} - (\log \ell)^{-(2 + \rho)/d}
\\[1ex]
&\hspace{-3.7ex} \stackrel{(\ref{3.16}),(\ref{2.11})}{\ge} \lambda_d \big\{\o_d  \, / \, \big(\mbox{\f $\dis\frac{d-1}{\nu}$} \;(\log \ell) + (\log \ell)^{\beta'} + \gamma_1^d(\log \log \ell)^d (\log \ell)^{1- \kappa/d}\big)\big\}^{2/d}  
\\
& \quad - (\log \ell)^{-(2 + \rho)/d}
\\[1ex]
& > \; c_0 (\log \ell)^{-2/d} + 100 \gamma_2 (\log \ell)^{-3/d}, \; \mbox{since} \; c_0 \stackrel{(\ref{0.3})}{=} \lambda_d \{\o_d \, \nu/d\}^{2/d},
\end{split}
\end{equation}
a contradiction. This proves (\ref{3.19}).

\medskip
Combining (\ref{3.18}) and (\ref{3.19}), the claim (\ref{3.15}) follows. This proves Lemma \ref{lem3.1}.
\end{proof}

We now introduce (see above (4.56), p.~192 of \cite{Szni98a}):
\begin{equation}\label{3.21}
\begin{split}
\wt{\cC}_\ell = & \;\mbox{the collection of boxes $\wt{B} = (\log \ell)^{1/d} \big(q + (0,2 [ \gamma_1 \log \log \ell])^d\big)$},
\\[-0.5ex]
&\;\mbox{$q \in \IZ^d$ that intersect $B_\ell$}.
\end{split}
\end{equation}

\n
(The side-length of the boxes in $\wt{\cC}_\ell$ is the double of that of the boxes in $\cC_\ell$, see (\ref{3.6}).)

\medskip
Then, we consider the event, see (4.56) on p.~192 of \cite{Szni98a} (where we have chosen $\zeta = \frac{3}{4}$):
\begin{equation}\label{3.22}
\begin{split}
H = & \; \big\{ \o \in \Omega: \;\mbox{for all $B, B'$ in $\cC_\ell$ with $B \cap B' = \emptyset$ and ${\rm diam} (B \cup B') \le \ell^{3/4} (\log \ell)^{1/d}$},
\\[-0.5ex]
& \;\; \;|B \backslash \big(\cD_\ell(\o) \cup \cB_\ell(\o)\big)| + \big| B' \backslash (\cD_\ell(\o) \cup \cB_\ell(\o)\big)\big| \le \mbox{\f $\dis\frac{d}{\nu}$} \; \big(1 + \mbox{\f $\dis\frac{3}{4}$}\big) (\log \ell) + (\log \ell)^{\beta'}
\\[-0.5ex]
&\quad \mbox{and for all} \; \wt{B} \in \wt{\cC}_\ell, \, \big|\wt{B} \backslash \big(\cD_\ell(\o) \cup \cB_\ell(\o)\big)\big| \le  \mbox{\f $\dis\frac{d}{\nu}$} \; (\log \ell) + (\log \ell)^{\beta'}\;
\end{split}
\end{equation}

\n
It then follows from Lemma 4.8 on p.~192 of \cite{Szni98a}  that
\begin{equation}\label{3.23}
\mbox{for large $\ell$}, \IP[H]  \ge 1 - \exp \big\{ -\mbox{\f $\dis\frac{\nu}{2}$} \;(\log \ell)^{\beta'}\big\}.
\end{equation}

\n
(We could replace $\frac{3}{4}$ by any $\zeta \in (0,1)$ arbitrarily close to $1$ in the definition (\ref{3.22}) with a corresponding (\ref{3.23}), but the choice $\zeta = \frac{3}{4}$ will suffice for our purpose).

\medskip
We now define the event encapsulating the nature of the typical configurations $\o$, which we will consider in the analysis of $\lambda_{1,\o}(B_\ell)$ and $\lambda_{2,\o}(B_\ell)$, in the next two sections. We recall (\ref{3.7}), (\ref{3.9}), (\ref{3.11}), (\ref{3.13}), (\ref{3.22}) and set
\begin{equation}\label{3.24}
T = C \cap E \cap F \cap G \cap H,
\end{equation}
so that by (\ref{3.8}), (\ref{3.10}), (\ref{3.12}), (\ref{3.15}), (\ref{3.23}), 
\begin{equation}\label{3.25}
\mbox{for large $\ell$}, \IP[T] \ge 1 - 5 \, \exp\big\{ -\mbox{\f $\dis\frac{\nu}{2}$} \;  (\log \ell)^{\beta'}\big\}.
\end{equation}

\section{Localization}
\setcounter{equation}{0}

We now proceed with the investigation of the first two eigenvalues $\lambda_{1,\o}(B_\ell)$, $\lambda_{2,\o}(B_\ell)$, see (\ref{1.5}), (\ref{1.9}), when $\ell$ is large, under the occurrence of the likely event $T$, see (\ref{3.24}). Whereas $\lambda_{1,\o}(B_\ell)$ is comparable to $c_0(\log \ell)^{-2/d}$ on $T$ (with $c_0$ from (\ref{0.3})), we introduce a certain ``resonance event'' $R$ in (\ref{4.5}), where $\lambda_{2,\o}(B_\ell) < \lambda_{1,\o} (B_\ell) + \sigma(\log \ell)^{-(1+2/d)}$ (with fixed $\sigma > 0$). The main Theorem \ref{theo4.1} of this section shows that for large $\ell$ on $T \cap R$, one can find two distant sub-boxes of side-length of order $(\log \ell)^{1/d}$ contained in $B_\ell$, with principal Dirichlet eigenvalues, which are close to $\lambda_{1,\o}(B_\ell)$ in scale $(\log \ell)^{-(1 + 2/d)}$. In addition, the principal Dirichlet eigenfunctions attached to these boxes are well localized in balls of radius $R_0(\log \ell)^{1/d} (1 + o(1))$ with centers close to the respective centers of these boxes. These results will in essence follow from the application of the method of enlargement of obstacles recalled in Section 2, and the quantitative Faber-Krahn inequality (\ref{1.17}). The reduction to the analysis of boxes with size of order $(\log \ell)^{1/d}$ (unlike the boxes in $\cC_\ell$, see (\ref{3.6})) will be important in Section 6 for the quality of the deconcentration estimates in Theorem \ref{theo6.2}. We refer to the beginning of Section 3 concerning the choices of parameters (notably for the method of enlargement of obstacles), which remain in force, and for the convention concerning positive constants.

\medskip
We first need some additional notation. We introduce the length
\begin{equation}\label{4.1}
L_0 = 10 (\lceil R_0 \rceil + 1) (\log \ell)^{1/d} \; \mbox{(with $R_0$ from (\ref{0.2}) and $\ell > 10$ throughout)},
\end{equation}
and consider the open boxes of side-length $L_0$ and center in $(\log \ell)^{1/d} \,\IZ^d$:
\begin{equation}\label{4.2}
D_{0,q} = q (\log \ell)^{1/d} + \big( - \mbox{\f $\dis\frac{L_0}{2}$}, \mbox{\f $\dis\frac{L_0}{2}$}\big)^d, \; q \in \IZ^d.
\end{equation}

\n
We will typically write $D_0$ or speak of an {\it $L_0$-box} to refer to a generic box of the form $D_{0,q}$. Given such a box $D_0$, we will refer to 
\begin{equation}\label{4.3}
\begin{array}{l}
\mbox{{\it the central box} of $D_0$ to denote the closed box of side-length $2(\log \ell)^{1/d}$}
\\
\mbox{and same center as $D_0$}.
\end{array}
\end{equation}
To define the resonance set, we first pick
\begin{equation}\label{4.4}
\sigma > 0.
\end{equation}

\n
We will eventually let $\sigma$ tend to $0$ in Section 6. The {\it resonance event} is then
\begin{equation}\label{4.5}
\mbox{$R = \{ \o \in \Omega; \lambda_{1,\o}(B_\ell) < \infty$ and $\lambda_{2,\o}(B_\ell) < \lambda_{1,\o}(B_\ell) + \rho_\ell\}$, where $\rho_\ell = \sigma (\log \ell)^{-(1 + 2/d)}$}.
\end{equation}

\n
We will show in Theorem \ref{theo6.1} that $\lim_{\sigma \r 0} \limsup_{\ell \r \infty} \IP[R] = 0$ (this will be the lower bound on the spectral gap). For the time being the main object of this section is the proof of Theorem \ref{theo4.1}. The simpler Theorem \ref{theo4.2} is also of interest. We recall from (\ref{3.9}), (\ref{4.5}) that $\delta_\ell = (\log \ell)^{-(2+ 2/d)}$, $\rho_\ell = \sigma(\log \ell)^{-(1 + 2/d)}$, and $\gamma_2$ is the constant from (\ref{3.9}). The likely event $T$ is defined in (\ref{3.24}).

\begin{theorem}\label{theo4.1}
There exist $\wh{\eta}_1, \wh{\eta}$ in $(0,\frac{1}{d})$ and $\ell_0 \ge 10$ such that for $\ell \ge \ell_0$, on the event $T \cap R$, one has boxes $D_0,D_0'$ contained in $B_\ell$ with $d_\infty (D_0, D'_0) \ge \ell^{3/4}$ such that
\begin{equation}\label{4.6}
\left\{ \begin{array}{l}
\lambda_{1,\o} (B_\ell) \le \lambda_{1,\o}(D_0) \le \lambda_{1,\o} (B_\ell) + \rho_\ell + \delta_\ell + e^{-(\log \ell)^{\wh{\eta}}} \le c_0 (\log \ell)^{-2/d} + 2 \gamma_2 (\log \ell)^{-3/d}, 
\\
\mbox{with similar inequalities for $\lambda_{1,\o}(D'_0)$}.
\end{array}\right.
\end{equation}
and there are open balls $\wh{B}, \wh{B}'$ with centers having rational coordinates, belonging to the respective central boxes (see (\ref{4.3})) of $D_0$ and $D'_0$ with radius $\wh{R} = R_0 (\log \ell)^{1/d} + 2 e^{-(\log \ell)^{\wh{\eta}_1}}$ (and $R_0$ from (\ref{0.2})), such that
\begin{equation}\label{4.7}
\left\{ \begin{array}{l}
\lambda_{1,\o} (D_0) \le \lambda_{1,\o}(\wh{B}) \le \lambda_{1,\o} (D_0) + e^{-(\log \ell)^{\wh{\eta}}}, 
\\
\mbox{with similar inequalities for $\lambda_{1,\o}(D'_0), \lambda_{1,\o}(\wh{B}')$},
\end{array}\right.
\end{equation}
and (with the notation (\ref{1.14}))
\begin{equation}\label{4.8}
\left\{ \begin{array}{l}
\varphi_{1,D_0,\o} \le e^{-(\log \ell)^{\wh{\eta}}} \; \mbox{on} \; D_0 \backslash \wh{B},
\\
\mbox{with a similar inequality for $\varphi_{1,D'_0,\o}$ on $D'_0 \backslash \wh{B}$}.
\end{array}\right.
\end{equation}
\end{theorem}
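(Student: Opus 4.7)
\medskip
I plan to prove Theorem \ref{theo4.1} in four steps. \textbf{(i)} Identify a first interior box $B_*\in \cC_\ell^{\rm int}$ whose principal Dirichlet eigenvalue nearly realises $\lambda_{1,\o}(B_\ell)$. \textbf{(ii)} Use the resonance event $R$ together with a Hong--Krahn--Szeg\H{o}-type volume-doubling argument to extract a second such box $B_*'\neq B_*$, then invoke event $H$ to force ${\rm diam}(B_*\cup B_*')>\ell^{3/4}(\log\ell)^{1/d}$. \textbf{(iii)} Apply the quantitative Faber-Krahn inequality (\ref{1.17}) to each relevant clearing to obtain near-spherical approximations, and define the balls $\wh{B},\wh{B}'$ and the enclosing $L_0$-boxes $D_0,D_0'$ accordingly. \textbf{(iv)} Derive the eigenvalue bounds (\ref{4.6})--(\ref{4.7}) from domain monotonicity and a cutoff test-function argument, and prove the localization (\ref{4.8}) via the Feynman-Kac identity (\ref{1.12}) combined with Lemma \ref{lem2.2}.

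\medskip
Step (i) is immediate from events $F$ and $E$: $F$ supplies $B_*\in\cC_\ell$ with $\lambda_{1,\o}(B_*\cap B_\ell)\le \lambda_{1,\o}(B_\ell)+\delta_\ell$, and the lower bound in $E$ excludes $B_*\in \cC_\ell^{\rm bound}$. For step (ii), let $\psi_1,\psi_2$ be $L^2$-orthonormal Dirichlet eigenfunctions of $B_{\ell,\o}$ attached to $\lambda_{1,\o}(B_\ell),\lambda_{2,\o}(B_\ell)$. Either (a) they are supported on distinct connected components of $B_{\ell,\o}$, or (b) they share a common component $\cO$. In case (b), Hong--Krahn--Szeg\H{o} gives $\lambda_2(\cO)\ge \lambda_d(2\omega_d/|\cO|)^{2/d}$, which combined with $\lambda_{2,\o}(B_\ell)\le c_0(\log\ell)^{-2/d}(1+o(1))$ forces $|\cO|\ge 2(d/\nu)\log\ell\,(1-o(1))$. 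With event $C$ placing $\cO$ inside a single $\cC_\ell$-box (or a pair of overlapping ones contained in some $\wt{B}\in\wt{\cC}_\ell$), and after the MoE eigenvalue correction of Theorem \ref{theo2.1}, this contradicts either the free-volume bound of event $G$ or the $\wt{\cC}_\ell$ clause of event $H$. Hence case (a) holds, and the second component sits in some $\cC_\ell$-box $B_*'\neq B_*$ with $\lambda_{1,\o}(B_*'\cap B_\ell)\le \lambda_{1,\o}(B_\ell)+\rho_\ell+\mathrm{(small)}$. The same volume comparison, now in the form ``each of the two clearings has measure at least $(d/\nu)\log\ell(1-o(1))$'' versus the disjoint-pair bound $(7/4)(d/\nu)\log\ell+(\log\ell)^{\beta'}$ of event $H$, rules out ${\rm diam}(B_*\cup B_*')\le \ell^{3/4}(\log\ell)^{1/d}$, yielding $d_\infty(D_0,D_0')\ge \ell^{3/4}$ for large $\ell$.

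\medskip
For step (iii), (\ref{1.17}) applied to each minimizing clearing with $\lambda_1|U|^{2/d}/(\omega_d^{2/d}\lambda_d)-1=O((\log\ell)^{-1/d})$ (from the upper halves of $F$ and $G$) yields Fraenkel asymmetry $A(U)=O((\log\ell)^{-1/(2d)})$, so $U$ differs in measure by $o(\log\ell)$ from a ball $B(x_*,R_*)$ with $R_*=R_0(\log\ell)^{1/d}(1+o(1))$. I then set $\wh{B}=\overset{\circ}{B}(x,\wh{R})$ for $x$ a rational approximation of $x_*$ within $e^{-(\log\ell)^{\wh{\eta}_1}}$ and $\wh{R}=R_0(\log\ell)^{1/d}+2e^{-(\log\ell)^{\wh{\eta}_1}}$, and take $D_0$ the lattice $L_0$-box of (\ref{4.2}) whose central box (\ref{4.3}) contains $x$; the margin $R_1(\log\ell)^{1/d}\gg L_0$ from (\ref{3.3}) ensures $D_0\subset B_*$. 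For step (iv), $\lambda_{1,\o}(B_\ell)\le \lambda_{1,\o}(D_0)\le \lambda_{1,\o}(\wh{B})$ is domain monotonicity, and $\lambda_{1,\o}(D_0)\le \lambda_{1,\o}(B_*\cap B_\ell)+e^{-(\log\ell)^{\wh{\eta}}}$ by testing with the principal eigenfunction of $B_*\cap B_\ell$ (essentially supported in $\wh{B}\subset D_0$). For (\ref{4.8}), I apply (\ref{1.12}) with $O$ the component of $(D_0)_\o$ supporting $\varphi_{1,D_0,\o}$ and $O'=O\setminus\overline{\wh{B}}$; smallness of $|O'|$ gives $\lambda_1(O')\gg\lambda_{1,\o}(D_0)$, and iterating Lemma \ref{lem2.2} across segments of length $L(\log\ell)^{(1-\alpha)/d}$ bounds $P_y[X_{T_{O'}}\in\partial\wh{B}]\le (1/2)^{d_\infty(y,\wh{B})/L(\log\ell)^{(1-\alpha)/d}}$ for $y\in O'$ in the density set, delivering $e^{-(\log\ell)^{\wh{\eta}}}$ decay for any $\wh{\eta}<\alpha/d$ after combining with $\|\varphi\|_\infty\le c_1\lambda^{d/4}$ from Lemma \ref{lem1.1}; points $y$ not in the density set lie in obstacles ($\varphi=0$), in the bad set (of vanishing relative measure by Theorem \ref{theo2.4}), or in the ``escaped clearing'' of measure $o(\log\ell)$, all absorbed via continuity of $\varphi$. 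Finally, $\lambda_{1,\o}(\wh{B})\le \lambda_{1,\o}(D_0)+e^{-(\log\ell)^{\wh{\eta}}}$ follows by using $(\varphi_{1,D_0,\o}-e^{-(\log\ell)^{\wh{\eta}}})_+$ as a Dirichlet test function on $\wh{B}$. The \textbf{main obstacle} is step (ii): making the Hong--Krahn--Szeg\H{o}/volume-doubling argument rigorous when $\cO$ is defined through MoE-killed Brownian motion and has irregular shape, so that one must combine the measure-theoretic input (\ref{1.17}) with the MoE eigenvalue correction of Theorem \ref{theo2.1} and the free-volume constraints of events $G$ and $H$ to extract the required volume lower bound.
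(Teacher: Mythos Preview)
Your overall structure matches the paper's, but Step~(ii), case~(b) has a genuine gap. You write that ``event $C$ plac[es] $\cO$ inside a single $\cC_\ell$-box'', where $\cO$ is a connected component of $B_{\ell,\o}$. This is a misreading of (\ref{3.7}): event $C$ confines the connected components of the random open set $O$ from (\ref{3.4}) (the $R_1(\log\ell)^{1/d}$-neighborhood of \emph{clearing boxes}), not the connected components of $B_{\ell,\o}$. A component $\cO$ of $B_{\ell,\o}$ can easily have volume of order $\ell^d$ --- for small $a$ the complement of the obstacle set percolates --- so the Hong--Krahn--Szeg\H{o} lower bound $|\cO|\ge 2(d/\nu)(\log\ell)(1-o(1))$ is trivially satisfied and yields no contradiction with $G$ or $H$. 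Your argument therefore does not rule out case~(b), and you have not produced a second low-eigenvalue box $B_*'$.

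The paper handles case~(b) differently (Lemma~\ref{lem4.3}): it takes nodal domains $U\subset\{\psi_2>0\}$, $U'\subset\{\psi_2<0\}$ of the second eigenfunction, each of which satisfies $\lambda_{1,\o}(U),\lambda_{1,\o}(U')\le\lambda_{2,\o}(B_\ell)$. This gives two \emph{disjoint} open sets with small principal eigenvalues directly, without any volume argument. Theorem~\ref{theo2.3} (applied as in (4.54) of \cite{Szni98a}) then localizes each of $U,U'$ to a box in $\cC_\ell$; only \emph{after} this localization do the free-volume constraints of $G$ and $H$ enter, to separate the two boxes.

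Two smaller issues in Step~(iv). First, your bound $\lambda_{1,\o}(D_0)\le\lambda_{1,\o}(B_*\cap B_\ell)+e^{-(\log\ell)^{\wh\eta}}$ ``by testing with the principal eigenfunction of $B_*\cap B_\ell$ (essentially supported in $\wh B$)'' is circular: you have not yet shown that eigenfunction is localized. The paper avoids this by applying Theorem~\ref{theo2.3} directly with $U_2=\wt B$, $U_1=\wh B$, $\cA=\wh B_{\rm int}$, which needs only the small-volume input (\ref{4.23}). Second, for (\ref{4.8}) your iteration of Lemma~\ref{lem2.2} only controls points $y$ in $\overline{\cD_\ell(\o)}$; to propagate the decay you must first drive Brownian motion from an arbitrary $y\in D_0\setminus\wh B$ into $\overline{\cD_\ell(\o)}\cup\wh B_{\rm int}$, and for that you need the quantitative small-volume bound $|\wt B\setminus(\cD_\ell(\o)\cup\wh B_{\rm int})|\le(\log\ell)^{\wh\mu}$ from Proposition~\ref{prop4.5}, not merely that the bad set or the escaped clearing has measure $o(\log\ell)$.
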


We stated (\ref{4.7}) for completeness but our main interest in view of Sections 5 and 6 lies in (\ref{4.8}). The rational coordinates of the centers of the balls are mentioned to highlight the measurability of the events under consideration. One also has the simpler

\begin{theorem}\label{theo4.2}
With $\wh{\eta}_1, \wh{\eta}$ as in Theorem \ref{theo4.1}, there exists $\ell_1 >10$ such that for $\ell \ge \ell_1$, on the event $T$, one has a box $D_0$ contained in $B_\ell$ such that
\begin{equation}\label{4.9}
\lob \le \lo(D_0) \le \lob +\delta_\ell + e^{-\lol^{\wh{\eta}}}\le c_0 \lol^{-2/d}  + 2 \gamma_2\lol^{-3/d},
\end{equation}
and an open ball $B^\#$ with center having rational coordinates, belonging to the central box of $D_0$ with radius $\wh{R}$, such that the first lines of (\ref{4.7}) and (\ref{4.8}) hold with $B^\#$ in place of $\wh{B}$.
\end{theorem}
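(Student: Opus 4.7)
The plan is to run a single-clearing analogue of Theorem \ref{theo4.1}: use the method of enlargement of obstacles (Theorems \ref{theo2.1}, \ref{theo2.3}, \ref{theo2.4}) to reduce the problem to a small clearing set $\Omega^{*}$, apply the quantitative Faber--Krahn inequality (\ref{1.17}) to replace $\Omega^{*}$ by an approximate ball $B_0$, build the box $D_0$ and the ball $B^\#$ around $B_0$, and finally use the representation (\ref{1.12}) together with the obstacle-crossing bound of Lemma \ref{lem2.2} to localize the ground-state eigenfunction. Since only one clearing is needed, the event $H$ (which would produce \emph{two} distant clearings) and the resonance event $R$ play no role here.

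First, on the event $T \subseteq F \cap E \cap C \cap G$, select $B^{*} \in \cC_\ell^{\rm int}$ realizing $\inf_{B \in \cC_\ell} \lo(B \cap B_\ell)$. By event $F$,
$$\lob \le \lo(B^{*}) \le \lob + \delta_\ell \le c_0\lol^{-2/d} + \gamma_2\lol^{-3/d},$$
and event $E$ forces $B^{*}$ into the interior with margin $100\gamma_1\log\log\ell \cdot \lol^{1/d}$. Set $\Omega^{*} := B^{*} \setminus \overline{\cD_\ell(\o)}$. Theorem \ref{theo2.1} with $M = 2c_0$ gives $\lo(\Omega^{*}) = \lo(B^{*}) + o(\lol^{-(2+\rho)/d})$, and event $G$ combined with Theorem \ref{theo2.4} yields $|\Omega^{*}| \le \tfrac{d}{\nu}\lol + O(\lol^{\beta'})$. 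Plugging these into (\ref{1.17}),
$$\frac{\lo(\Omega^{*})(|\Omega^{*}|/\o_d)^{2/d}}{\lambda_d} - 1 = O(\lol^{-1/d}),$$
so the Fraenkel asymmetry satisfies $A(\Omega^{*}) \le c\lol^{-\eta_0}$ for some $\eta_0 > 0$, and one finds an open ball $B_0$ with $|B_0| = |\Omega^{*}|$, radius $R_0\lol^{1/d}(1+o(1))$, and $|\Omega^{*} \Delta B_0| \le c|B_0|\lol^{-\eta_0}$. Since $|B_0 \cap \Omega^{*}| \ge (1-o(1))|B_0|$ and $\Omega^{*} \subset B^{*}$, the center of $B_0$ lies inside $B^{*}$ (so deep inside $B_\ell$).

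Choose $D_0$ to be the $L_0$-box on the lattice $\lol^{1/d}\IZ^d$ whose center is the nearest lattice point to that of $B_0$: the lattice spacing matches the half-side of the central box (\ref{4.3}), placing the center of $B_0$ in the central box of $D_0$, and $D_0 \subset B_\ell$ by the interior margin of $B^{*}$. Define $B^\#$ as the open ball of radius $\wh R$ centered at a rational-coordinate point within $e^{-\lol^{\wh{\eta}_1}}$ of the center of $B_0$; then $B^\# \subset D_0$ and $B^\#$ contains a concentric $e^{-\lol^{\wh{\eta}_1}}$-enlargement of $B_0$. The upper bound on $\lo(D_0)$ in (\ref{4.9}) then follows from Theorem \ref{theo2.3} applied with $U_2 = B_\ell$, $U_1 = D_0$, $r$ slightly above $r_1\lol^{1/d}$, and $\cA$ a closed neighborhood of $\overline{B_0}$ that, using event $C$ and $F$, absorbs every clearing component of $B_\ell$ of comparable principal eigenvalue; the volume condition (\ref{2.6}) is then secured by events $C$, $G$ and Theorem \ref{theo2.4}, while the interior margin of $B^{*}$ gives $R/(4r) \ge c\lol^{\alpha/d}$, producing the error $\exp\{-c_3[R/(4r)]\} \le e^{-\lol^{\wh\eta}}$ for any $\wh\eta < \alpha/d$.

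The main obstacle is the eigenfunction localization (\ref{4.8}). For $y \in D_0 \setminus B^\#$, apply (\ref{1.12}) to $\varphi_{1,D_0,\o}$ with $O'$ the connected component of $(D_0 \setminus \overline{B^\#})_\o$ containing $y$: the set $(D_0 \setminus \overline{B^\#}) \setminus \overline{\cD_\ell(\o)}$ has volume at most $|\Omega^{*} \Delta B_0| + O(\lol^{\beta'}) = o(\lol)$, so Faber--Krahn combined with Theorem \ref{theo2.1} gives $\lo(O') \gg \lo(D_0)$, making the factor $e^{\lambda T_{O'}}$ essentially harmless; the event $T_{O'} < T_{D_{0,\o}}$ then coincides with the Brownian motion reaching $\partial \overline{B^\#}$ before hitting an obstacle. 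Lemma \ref{lem2.2} bounds this probability by $(\tfrac12)^{\Omega(\lol^{\alpha/d})} = e^{-\lol^{\wh\eta}}$, since crossing $D_0 \setminus B^\#$ through $\overline{\cD_\ell(\o)}$ requires $\Omega(\lol^{\alpha/d})$ successful escapes of scale $L\lol^{(1-\alpha)/d}$ each carrying probability $\le \tfrac12$; the small component of $\Omega^{*} \setminus B^\#$ is handled by first diffusing the motion out of it into the density set. Finally the comparison $\lo(B^\#) \le \lo(D_0) + e^{-\lol^{\wh\eta}}$ in the first line of (\ref{4.7}) is obtained by plugging $\chi \cdot \varphi_{1,D_0,\o}$ (for a smooth cutoff $\chi$ supported in $B^\#$) into the Rayleigh quotient on $B^\#$; the defect is controlled by (\ref{4.8}). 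The exponents $\wh{\eta}, \wh{\eta}_1$ are fixed so that the errors from Theorem \ref{theo2.3} and from the obstacle-crossing bound coincide.
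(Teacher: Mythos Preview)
Your overall strategy matches the paper's single-clearing version of Theorem \ref{theo4.1} (Remark \ref{rem4.7} 1)): pick $B^*\in\cC_\ell^{\rm int}$ via $F\cap E$, apply quantitative Faber--Krahn, then Theorem \ref{theo2.3} and Lemma \ref{lem2.2} with (\ref{1.12}). However, your application of Theorem \ref{theo2.3} with $U_2=B_\ell$ cannot deliver the error $e^{-(\log\ell)^{\wh\eta}}$. Far from $B_0$ there are in general many clearing boxes $C_q$ in $B_\ell$ with $|C_q\setminus\cD_\ell(\o)|\ge r_1^d\log\ell$; a closed neighbourhood $\cA$ of $\overline{B_0}$ does not cover them, so condition (\ref{2.6}) forces $r^d\ge r_1^d\log\ell$, i.e.\ $r\gtrsim(\log\ell)^{1/d}$. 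But (\ref{2.7}) with $U_1=D_0$ bounds $R$ by $d_\infty(\cA,B_\ell\setminus D_0)\lesssim L_0\sim(\log\ell)^{1/d}$, hence $R/(4r)=O(1)$ and (\ref{2.8}) yields only an $O(1)$ error, not the stretched exponential you claim. Enlarging $\cA$ to absorb all distant clearings breaks (\ref{2.7}) instead.

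The paper avoids this by inserting the intermediate box $\wt B\in\wt\cC_\ell$ (single-box analogue of Proposition \ref{prop4.5}) and establishing the key volume bound $|\wt B\setminus(\cD_\ell(\o)\cup\wh B_{\rm int})|\le(\log\ell)^{\wh\mu}$ with $\wh\mu<1$, via (\ref{4.32}); this uses the second clause of event $H$ (the bound on $\wt B$-boxes), so your assertion that $H$ plays no role is incorrect. With $U_2=\wt B$, $U_1=\wh B$, $\cA=\wh B_{\rm int}$ one may take $r=(\log\ell)^{\wh\mu/d}$ and $R=\tfrac{1}{\sqrt d}(\log\ell)^{\wh\eta_1}$, giving $R/r=(\log\ell)^{\wh\eta_1-\wh\mu/d}$ and the desired error. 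The same volume bound underlies the uniform estimate (\ref{4.37}), which is what makes the iterated strong-Markov argument (\ref{4.41})--(\ref{4.44}) for the eigenfunction decay work; your single-shot bound on $E_y[e^{\lambda T_{O'}},\,T_{O'}<T_{D_{0,\o}}]$ with $O'$ a full component of $(D_0\setminus\overline{B^\#})_\o$ is not justified as written, since $\lambda_1(O')\gg\lambda$ does not by itself control the size of that expectation. Fixing these points amounts to restoring the $\wt B$-step and the sharp volume estimate $(\log\ell)^{\wh\mu}$, after which both (\ref{4.7}) and (\ref{4.8}) follow as in the paper.
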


The proof of Theorem \ref{theo4.2} is simpler than that of Theorem \ref{theo4.1} and is also quite similar. We mainly focus on the proof of Theorem \ref{theo4.1} in the remainder of this section. In Remark \ref{rem4.7} 1) we briefly sketch the main steps in the proof of Theorem \ref{theo4.2}.

\medskip
It may be appropriate to describe here the general line of the arguments, which we use. We begin with two lemmas, which for large $\ell$ on $T \cap R$ provide us with two distant boxes $B, B'$ in $\cC_\ell^{\rm int}$ (see (\ref{3.14})) with principal Dirichlet eigenvalues, essentially within $\rho_\ell$ ($= \sigma \lol^{-(1 + 2/d)}$) from $\lob$, and such that after deletion of the closure of the density set, the corresponding principal Dirichlet eigenvalues do not increase too much (but may well be much bigger than $\lob + \rho_\ell)$. In the Proposition \ref{prop4.5} we combine the volume estimates and the eigenvalue estimates for $B \backslash \overline{\cD_\ell(\o)}$ and $B' \backslash \overline{\cD_\ell(\o)}$ with the quantitative Faber-Krahn inequality (\ref{1.17}) to bring into play two balls of same respective volumes as the above two sets, and symmetric differences with these respective sets of small volume. We also find two suitable boxes of $\wt{\cC}_\ell$ (see (\ref{3.21})) each containing one of the above balls close to their center, and such that after deletion of the ball and $\cD_\ell(\o)$, the volume of the remaining set in each box is small compared to $\log \ell$. Once Proposition \ref{prop4.5} is proved, we can apply Theorem \ref{theo2.3}, and also use the representation formula (\ref{1.12}) for eigenfunctions combined with Lemma \ref{lem2.2}, to establish the existence of two distant $L_0$-boxes (concentric with the above boxes of $\wt{\cC}_\ell$) having the desired principal Dirichlet eigenvalue estimates, and adequately small principal Dirichlet eigenfunctions outside balls of deterministic radius $\wh{R}$ (see above (\ref{4.7})) with suitable centers in the central boxes (see (\ref{4.3})) of these $L_0$-boxes.

\medskip
With this plan in mind, the first step is a deterministic statement.
\begin{lemma}\label{lem4.3}
For $\ell > 10$ and $\o \in \Omega$,
\begin{equation}\label{4.10}
\begin{array}{l}
\mbox{when $\lob < \infty$, one can find disjoint open subsets $U, U'$ in $B_\ell$ such that}
\\
\mbox{$\lo(U)$ and $\lo(U')$ belong to the interval $[\lob, \llob]$}.
\end{array}
\end{equation}
\end{lemma}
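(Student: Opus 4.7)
The plan is to decompose $B_{\ell,\o}$ into its finitely many connected components and do a short case analysis on how the two smallest Dirichlet eigenvalues of $B_\ell$ are distributed among them. Since $B_\ell$ is bounded and $\o$ is locally finite, we may write $B_{\ell,\o}$ as a disjoint union $O_1 \cup \cdots \cup O_k$ of bounded connected open sets, and the Dirichlet spectrum of $-\tfrac{1}{2}\Delta$ on $B_{\ell,\o}$ is the multiset union (with multiplicity) of the Dirichlet spectra on the $O_j$. In particular $\lob$ and $\llob$ are the two smallest elements of that multiset.

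First, suppose these two values are contributed by two different components, i.e.\ there exist $i \neq j$ with $\lo(O_i) = \lob$ and $\lo(O_j) = \llob$ (and whenever the two smallest entries of the multiset come from distinct components, minimality forces both to be principal eigenvalues of their respective components). Then $U := O_i$ and $U' := O_j$ are disjoint open subsets of $B_\ell$ whose principal Dirichlet eigenvalues equal $\lob$ and $\llob$, and we are done.

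The remaining case is that both values are contributed by a single component, say $O_i$, so that $\lo(O_i) = \lob$ and $\lambda_2(O_i) = \llob$ (the two smallest Dirichlet eigenvalues of $O_i$ counted with multiplicity). Here I would invoke the classical nodal-domain construction. Let $\varphi_2 \in H^1_0(O_i)$ be an $L^2$-normalized eigenfunction for $\lambda_2(O_i)$, chosen as the continuous representative on $\IR^d$ guaranteed by (\ref{1.10}) and the continuity statement preceding (\ref{1.9}). Since $\varphi_2$ is $L^2$-orthogonal to the pointwise positive principal Dirichlet eigenfunction on $O_i$, it must change sign in $O_i$, and the nodal sets
\[
U := \{x \in O_i : \varphi_2(x) > 0\}, \qquad U' := \{x \in O_i : \varphi_2(x) < 0\}
\]
are non-empty disjoint open subsets of $B_\ell$. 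The truncations $\varphi_2^{\pm} := \max(\pm \varphi_2, 0)$ lie in $H^1_0(O_i)$ by the standard truncation lemma, and continuity of $\varphi_2$ on $\IR^d$ forces them into $H^1_0(U)$ and $H^1_0(U')$ respectively. Testing the weak eigenvalue equation against $\varphi_2^+$ yields $\int_U \tfrac12 |\nabla \varphi_2^+|^2\,dx = \lambda_2(O_i) \int_U (\varphi_2^+)^2\,dx$, so the Rayleigh characterization gives $\lo(U) \le \lambda_2(O_i) = \llob$, while domain monotonicity via $U \subseteq O_i$ gives $\lo(U) \ge \lo(O_i) = \lob$; the same bounds hold for $U'$.

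The one non-bookkeeping step is the claim $\varphi_2^+ \in H^1_0(U)$, which requires ruling out trace contributions from the internal nodal boundary $\{\varphi_2 = 0\} \cap O_i$. I would handle this by approximating $\varphi_2^+$ in $H^1$ by $(\varphi_2 - \varepsilon)_+$ as $\varepsilon \downarrow 0$: by continuity of $\varphi_2$ on $\IR^d$, these approximants are supported in the relatively closed subset $\{\varphi_2 \ge \varepsilon\} \subset U$, hence belong to $H^1_0(U)$. This is the only genuinely analytic ingredient of the argument; the rest is purely organizational.
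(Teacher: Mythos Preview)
Your proof is correct and follows essentially the same approach as the paper: a case split on whether $\lob$ and $\llob$ arise from distinct components of $B_{\ell,\o}$ or from a single one, and in the latter case a nodal-domain argument using the second eigenfunction, with the $H^1_0$-membership handled via the $(\varphi_2-\varepsilon)_+$ approximation. The only cosmetic difference is that the paper takes $U,U'$ to be single connected components of $\{\varphi_2>0\}$ and $\{\varphi_2<0\}$ rather than the full nodal sets, but this changes nothing in the argument.
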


\begin{proof}
When $\lob < \infty$ (or equivalently when $B_{\ell,\o} \not= \emptyset$, see (\ref{1.4})), as noted below (\ref{1.9}), the eigenvalues $\lambda_{i,\o}, i \ge 1$, correspond to the reordering of the union (with multiplicities) of the Dirichlet eigenvalues of $-\frac{1}{2} \, \Delta$ in each of the finitely many connected components of $B_{\ell, \o}$. Thus, at least one of the items below occurs:
\begin{equation}\label{4.11}
\left\{ \begin{array}{rl}
{\rm i)} & \mbox{$\lob$ and $\llob$ correspond to principal Dirichlet eigenvalues}
\\
&\mbox{of $-\frac{1}{2} \, \Delta$ in distinct connected components of $B_{\ell,\o}$},
\\[2ex]
{\rm ii)} & \mbox{$\lob$ and $\llob$ are the first two Dirichlet eigenvalues }
\\
&\mbox{of $-\frac{1}{2} \, \Delta$ in one of the connected components of $B_{\ell,\o}$}.
\end{array}\right.
\end{equation}

\n
When (\ref{4.11}) i) occurs, the claim (\ref{4.10}) is immediate: one simply chooses $U$ and $U'$ as the connected components mentioned in (\ref{4.11}) i).

\medskip
We thus assume that (\ref{4.11}) ii) occurs. We denote by $W$ a connected component of $B_{\ell,\o}$ such that the first two Dirichlet eigenvalues of $-\frac{1}{2} \, \Delta$ in $W$ respectively coincide with $\lob$ and $\llob$ and by $\psi$ an $L^2$-normalized Dirichlet eigenfunction in $W$ corresponding to $\llob$. Note that $W$ satisfies an exterior cone condition (each boundary point of $W$ belongs to $B^c_\ell$ or to a closed ball of radius $a$ in $W^c$). As explained below (\ref{1.10}), the function $\psi$ is continuous and it equals $0$ outside $W$. Since $\psi$ is attached to the second Dirichlet eigenvalue of $-\frac{1}{2} \, \Delta$ (and orthogonal to the principal Dirichlet eigenfunction on the connected open set $W$), it changes sign, and we can choose non-empty connected components $U$ of $\{\psi > 0\}$ and $U'$ of $\{\psi < 0\}$. As we now explain
\begin{equation}\label{4.12}
\mbox{$\lo(U)$ and $\lo(U')$ are equal to $\llob$}.
\end{equation}

\n
This will complete the proof of (\ref{4.10}). We now prove (\ref{4.12}) and consider the case of $U$. The eigenfunction $\psi$ satisfies $- \frac{1}{2}\, \Delta \psi = \llob \psi$ in $U$ (in a classical sense, see for instance Theorem 11.7, p.~279 of \cite{LiebLoss01}). As we now explain $\psi 1_U$ belong to $H^1_0(U)$. Indeed, for $\alpha > 0, (\psi - \alpha)_+ 1_U$ is compactly supported in $U$, and has gradient equal to $\nabla \psi 1_{U \cap \{\psi > \alpha\}}$, see Corollary 6.18, p.~153 of \cite{LiebLoss01}. As $\alpha$ tends to $0$, $(\psi - \alpha)_+ 1_U$ is thus Cauchy in $H^1_0(U)$ and converges in $L^2(\IR^d)$ to $\psi 1_U$, so that $\psi 1_U \in H^1_0 (U)$.

\medskip
We write $\varphi$ as a shorthand for $\varphi_{1,U,\o}$, see (\ref{1.14}). Consider now two sequences $\psi_n$ and $\varphi_n$ of smooth functions compactly supported in $U$ respectively converging in $H^1_0(U)$ to $\psi 1_U$ and $\varphi$. Then for each $n \ge 1$, we have
\begin{equation}\label{4.13}
\begin{array}{rl}
{\rm i)}& \llob \dis\int \psi \varphi_n \,dx = \dis\int - \fr  \, \Delta \, \psi \varphi_n \, dx \stackrel{\rm int.\;by\;parts}{=} \fr \; \dis\int \nabla \psi \, \nabla \varphi_n \, dx.
\\[2ex]
{\rm ii)} &\lambda_{1,\o}(U) \dis\int \varphi \psi_n \,dx = \dis\int - \fr \, \Delta \varphi \psi_n\, dx \stackrel{\rm int.\;by\;parts}{=} \fr \; \dis\int \nabla \varphi \, \nabla \psi_n \, dx.
\end{array}
\end{equation}

\n
Thus, letting $n$ tend to infinity, we find that the right expressions in i) and ii) both converge to $\int_U \nabla \psi \nabla \varphi \,dx$ and the left expressions respectively converge to $\lambda_{2,\o}(B_\ell) \,\int \psi \varphi \, dx$ and $\lambda_{1,\o}(U) \int \varphi \psi \,dx$. The integral being positive, this shows that $\lo(U) = \llob$. In a similar fashion one has $\lo (U') = \llob$ and (\ref{4.12}) follows. This completes the proof of Lemma \ref{lem4.3}.
\end{proof}

We recall that $\rho_\ell = \sigma(\log \ell)^{-(1 + 2/d)}$, see (\ref{4.5}), and $\delta_\ell = (\log \ell)^{-(2 + 2/d)}$, see (\ref{3.9}). The next step towards the proof of Theorem \ref{theo4.1} is

\begin{lemma}\label{lem4.4}
For large $\ell$, on $T \cap R$ there exist $B, B'$ in $\cC^{\rm int}_\ell$ (see (\ref{3.14})) such that
\begin{equation}\label{4.14}
d_\infty (B,B') \ge \mbox{\f $\dis\frac{1}{2d}$} \; \ell^{3/4} (\log \ell)^{1/d},
\end{equation}
\begin{equation}\label{4.15}
\left\{ \begin{array}{rl}
{\rm i)} & \lob \le \lo (B) \le \lob + \rho_\ell + \delta_\ell \le c_0 \lol^{-2/d} + 2 \gamma_2 \lol^{-3/d},
\\[2ex]
{\rm ii)} & \lo \big(B \backslash \overline{\cD_\ell (\o)}\big) \le \lo (B) + \lol^{-(2 + \rho)/d},
\\[2ex]
&\hspace{-4.5ex} \mbox{and similar inequalities as {\rm i)} and {\rm ii)} with $B'$ in place of $B$.}
\end{array}\right.
\end{equation}
(With $\gamma_2$ as in (\ref{3.9}) and $\rho$ as in Theorem \ref{theo2.1}).
\end{lemma}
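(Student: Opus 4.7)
On $T$, event $F$ forces $\lob \le c_0 \lol^{-2/d} + \gamma_2 \lol^{-3/d} < \infty$, so Lemma \ref{lem4.3} yields disjoint open $U, U' \subseteq B_\ell$ with $\lo(U), \lo(U') \in [\lob, \llob]$. Event $R$ gives $\llob - \lob < \rho_\ell$, so for large $\ell$ both $\lo(U)$ and $\lo(U')$ lie below $M \lol^{-2/d}$ with $M = 2c_0$. Theorem \ref{theo2.1} then yields $\lo(U \setminus \overline{\cD_\ell(\o)}) \le \lo(U) + \lol^{-(2+\rho)/d}$ for large $\ell$, and identically for $U'$. Let $V^U$ be a connected component of $U \setminus \overline{\cD_\ell(\o)}$ realizing the principal Dirichlet eigenvalue on $(U \setminus \overline{\cD_\ell(\o)})_\o$, and $V^{U'}$ analogously; these two sets are disjoint. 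One then has $\lo(V^U), \lo(V^{U'}) \le c_0 \lol^{-2/d} + 2\gamma_2 \lol^{-3/d}$ for $\ell$ large (with $\rho$ chosen $>1$ in the admissible parameter list, so that $\rho_\ell$ and $\lol^{-(2+\rho)/d}$ are both $o(\lol^{-3/d})$), and classical Faber-Krahn delivers the volume lower bound
\[
|V^U|,\ |V^{U'}| \;\ge\; \dis\frac{d}{\nu}\, \lol \,(1-o(1)).
\]

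The main obstacle is the geometric placement of $V^U$ and $V^{U'}$ inside distinct boxes $B, B' \in \cC_\ell$. A non-clearing cube $C_q$ contains at most $r_1^d \lol$ of $\complement \cD_\ell(\o)$-mass, much less than the macroscopic $(d/\nu)\lol$ mass of $V^U$, so $V^U$ must meet a clearing box and hence the open set $O$ of (\ref{3.4}); by event $C$, the connected component of $O$ containing this intersection is contained in some $B \in \cC_\ell$. The quantitative Faber-Krahn inequality (\ref{1.17}) further bounds the Fraenkel asymmetry of $V^U$, ruling out thin ``tendrils'' of $V^U$ leaving that component through chains of non-clearing cubes, and one concludes $V^U \subseteq B$; similarly $V^{U'} \subseteq B'$ for some $B' \in \cC_\ell$. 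Disjointness of $V^U, V^{U'}$ forces $B \ne B'$. Event $E$ then rules out $\cC_\ell^{\rm bound}$: there $\lo(B)$ would exceed $c_0\lol^{-2/d} + 100 \gamma_2 \lol^{-3/d}$, contradicting $\lo(B) \le \lo(V^U) \le c_0\lol^{-2/d} + 2\gamma_2\lol^{-3/d}$, so in fact $B, B' \in \cC_\ell^{\rm int}$.

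The inequalities (\ref{4.15}) i) now follow: the lower bound $\lob \le \lo(B)$ uses $B \subseteq B_\ell$, the bound $\lo(B) \le \lo(U) + \lol^{-(2+\rho)/d} \le \lob + \rho_\ell + \delta_\ell$ uses admissibility $\rho \ge 2d$ so that $\lol^{-(2+\rho)/d} \le \delta_\ell$, and the final upper bound $c_0\lol^{-2/d}+2\gamma_2\lol^{-3/d}$ absorbs $\rho_\ell + \delta_\ell = o(\lol^{-3/d})$ together with event $F$. Assertion (\ref{4.15}) ii) is Theorem \ref{theo2.1} applied directly to $B$. For the separation (\ref{4.14}) I argue by contradiction: if $d_\infty(B, B') < \frac{1}{2d}\ell^{3/4}\lol^{1/d}$, then since $B, B' \in \cC_\ell$ have side-length $[\gamma_1\log\log\ell]\lol^{1/d}$, one has ${\rm diam}(B \cup B') \le \ell^{3/4}\lol^{1/d}$ for large $\ell$. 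Event $H$ then gives $|B\setminus(\cD_\ell(\o)\cup\cB_\ell(\o))| + |B'\setminus(\cD_\ell(\o)\cup\cB_\ell(\o))| \le \frac{7d}{4\nu}\lol + \lol^{\beta'}$. Combined with $V^U \subseteq B \setminus \cD_\ell(\o)$, $V^{U'} \subseteq B' \setminus \cD_\ell(\o)$ and Theorem \ref{theo2.4} (which gives $|B \cap \cB_\ell(\o)|, |B' \cap \cB_\ell(\o)| = o(\lol)$), one obtains $|V^U| + |V^{U'}| \le \frac{7d}{4\nu}\lol + o(\lol)$, contradicting the Faber-Krahn lower bound since $2d/\nu > 7d/(4\nu)$.
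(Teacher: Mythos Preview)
Your proposal contains a genuine gap in the localization step. The claim that the quantitative Faber-Krahn inequality (\ref{1.17}) forces $V^U \subseteq B$ is incorrect: small Fraenkel asymmetry only says $V^U$ is close to some ball in \emph{symmetric difference}, and this in no way rules out thin tendrils of arbitrarily small volume escaping the box $B$. Such tendrils contribute negligibly to $|V^U \Delta B^*|$ and hence to $A(V^U)$, yet they destroy the inclusion $V^U \subseteq B$ on which your chain $\lo(B) \le \lo(V^U)$ rests. Without that inclusion you have no bound on $\lo(B)$, and (\ref{4.15}) i) does not follow. The subsidiary claim ``disjointness of $V^U, V^{U'}$ forces $B \ne B'$'' is also false as stated (two disjoint sets can sit in the same box; one needs the volume argument via event $G$), and your use of the first clause of $H$ in the final paragraph presupposes $B \cap B' = \phi$, which is stronger than $B \ne B'$ since the boxes of $\cC_\ell$ overlap. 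Finally, you freely impose $\rho \ge 2d$ to get $\lol^{-(2+\rho)/d} \le \delta_\ell$; the parameter $\rho$ is constrained by the admissibility conditions (3.66) of \cite{Szni98a} and cannot be taken arbitrarily large.

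The paper's proof avoids all of this by never attempting to contain a connected component in a box. Instead it invokes Theorem \ref{theo2.3} (not Theorem \ref{theo2.1}) together with event $C$: on $C$ every connected component of $O$ meeting $B_\ell$ lies in some $B \in \cC_\ell$, and Theorem \ref{theo2.3} applied with $U_2 = U$, $U_1 = U \cap B$, and $\cA$ built from those components, directly yields $\lo(U) \ge \lo(U \cap B) - \delta_\ell$ as in (\ref{4.17}). This gives $\lo(B) \le \lo(U \cap B) \le \llob + \delta_\ell$ with no containment claim and no constraint on $\rho$. The separation (\ref{4.14}) and the case $B \cap B' \ne \phi$ are then handled via the volume lower bounds (\ref{4.21}) (obtained through Faber-Krahn applied to $(U\cap B)\backslash\overline{\cD_\ell(\o)}$, not to a connected component) together with both clauses of event $H$.
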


\begin{proof}
We first observe that for large $\ell$ on $T \cap R$ we have by (\ref{3.9}) and (\ref{4.5})
\begin{equation}\label{4.16}
\llob \le c_0 \lol^{-2/d} + \gamma_2 \lol^{-3/d} + \rho_\ell \le c_0 \lol^{-2/d} + \mbox{\normalsize $\frac{3}{2}$}\; \gamma_2 \lol^{-3/d}
\end{equation}

\n
and by Lemma \ref{lem4.3} there are disjoint open subsets $U, U'$ of $B_\ell$ such that $\lo (U)$ and $\lo (U')$ are at most $\llob < 2 c_0 \lol^{-2/d} - \delta_\ell$.

\medskip
Then, as in (4.54) on p.~192 of \cite{Szni98a} (using the fact that $C \subseteq T$, see (\ref{3.7}), (\ref{3.24}), and Theorem \ref{theo2.3} with $M = 2c_0$), one can find boxes $B, B'$ in $\cC_\ell$ such that
\begin{equation}\label{4.17}
\left\{ \begin{array}{l}
\llob \ge \lo (U) \ge \lo (U \cap B) - \delta_\ell,
\\[2ex]
\llob  \ge \lo (U') \ge \lo (U' \cap B') - \delta_\ell.
\end{array}\right.
\end{equation}

\n
In addition, since $\lo (B)$ and $\lo (B')$ are at most $\llob + \delta_\ell \le c_0 \lol^{-2/d} + 2 \gamma_2 \lol^{-3/d}$, by (\ref{4.16}), and keeping in mind that $T \subseteq E$, see (\ref{3.13}), (\ref{3.14}), we can additionally assume ($\ell$ being large enough) that
\begin{equation}\label{4.18}
\mbox{$B, B'$ belong to $\cC^{\rm int}_\ell$ (and in particular are included in $B_\ell$)}.
\end{equation}
This already proves that
\begin{equation}\label{4.19}
\mbox{the statement (\ref{4.15}) i) and the corresponding statement for $B'$ hold}.
\end{equation}

\n
Next, as a result of Theorem \ref{theo2.1} (with $M = 2c_0$), when $\ell$ is large so that in particular $c_0\lol^{-2/d} + \rho_\ell + \delta_\ell + \lol^{-(2 + \rho)/d} < 2 c_0 \lol^{-2/d}$, on $T \cap R$ in addition to (\ref{4.16}) - (\ref{4.19}), one has
\begin{equation}\label{4.20}
\begin{split}
\lo (U \cap B) & \ge \lo \big((U \cap B) \, \backslash \, \overline{\cD_\ell(\o)} \,\big) \wedge \big(2 c_0 \lol^{-2/d}\big) - \lol^{-(2 + \rho)/d}
\\[1ex]
& = \lo  \big((U \cap B) \, \backslash \, \overline{\cD_\ell(\o)} \,\big) -  \lol^{-(2 + \rho)/d}
\\[1ex]
&\hspace{-4.3ex} \stackrel{\rm Faber-Krahn}{\ge} \lambda_d \big\{ \o_d \, / \, |(U \cap B)  \, \backslash \, \overline{\cD_\ell(\o)} \,|\big\}^{2/d} - \lol^{-(2 + \rho)/d}
\\[1ex]
& \ge  \lambda_d \big\{ \o_d \, / \, \big[\big|(U \cap B)  \, \backslash \, (\cD_\ell(\o) \cup \cB_\ell(\o)\big)\big| + | B \cap \cB_\ell(\o)|\big]\big\}^{2/d} 
\\[1ex]
&\quad -  \lol^{-(2 + \rho)/d}
\end{split}
\end{equation}
Then, making use of Theorem \ref{theo2.4} and (\ref{3.6}) to bound $|B \cap \cB_\ell(\o)|$, and arguing in the same fashion for $U'$ and $B'$, we see that for large $\ell$ on $T \cap R$, one has in addition to (\ref{4.16}) - (\ref{4.19})
\begin{equation}\label{4.21}
\left\{ \begin{array}{l}
\lo (U \cap B) \ge \lambda_d\big\{\o_d \, / \, \big[\big|(U \cap B) \, \backslash \, \big(\cD_\ell (\o) \cup \cB_\ell(\o)\big)\big| +
\\[1ex]
\qquad \qquad \qquad \;   \gamma_1^d (\log \log \ell)^d \,  \lol^{1- \kappa/d}\big]\big\}^{2/d} - \lol^{-(2 + \rho)/d}
\\[1ex]
\mbox{and similar inequalities with $U', B'$ in place of $U,B$}.
\end{array}\right.
\end{equation}

\medskip\n
Now, both $\lo (U \cap B)$ and $\lo (U' \cap B')$ are at most $c_0 \lol^{-2/d} + \gamma_2 \lol^{-3/d} + \rho_\ell + \delta_\ell$ by (\ref{4.17}), (\ref{4.16}), and $c_0 = \lambda_d\{\o_d \, \nu/d\}^{2/d}$ by (\ref{0.3}). It now follows from this observation and (\ref{4.21}) that both $|(U \cap B) \backslash (\cD_\ell(\o) \cup \cB_\ell(\o))|$ and $|(U' \cap B') \backslash (\cD_\ell(\o) \cup \cB_\ell(\o))|$ exceed $\frac{d}{\nu} \, (1 - c_\ell) \, \log \ell$ where $c_\ell$ is a deterministic function of $\ell$ tending to $0$ as $\ell$ goes to infinity. Moreover, $U$ and $U'$ are disjoint, and if $B \cap B' \not= \emptyset$ then $B \cap B'$ is contained in some $\wt{B} \in \wt{\cC}_\ell$ for which
\begin{equation}\label{4.22}
\begin{array}{l}
\big| \wt{B} \, \backslash \, \big( \cD_\ell(\o) \cup \cB_\ell(\o)\big)\big| \ge \big|(U \cap B)  \, \backslash \,\big( \cD_\ell(\o) \cup B_\ell(\o)\big)\big|  \;+ 
\\[1ex]
\big| (U' \cap B') \, \backslash \, ( \cD_\ell(\o) \cup \cB_\ell(\o)\big)\big| \ge \mbox{\f $\dis\frac{2d}{\nu}$} \;(1 - c_\ell).
\end{array}
\end{equation}
On the other hand, it follows from the inclusion $T \subseteq H$, see (\ref{3.22}), (\ref{3.24}), that the left member of (\ref{4.22}) is at most $\frac{d}{\nu}\,\lol + \lol^{\beta '}$. So when $\ell$ is large, we can in addition assume that $B \cap B' = \emptyset$ and coming back to (\ref{3.22}) that ${\rm diam} (B \cup B') \ge \ell^{3/4} \lol^{1/d}$ so that (\ref{4.14}) holds.

\medskip
The claim (\ref{4.15}) ii) and the corresponding claim for $B'$ now follow from the application of Theorem \ref{theo2.1}, the bound $\lo (B) + \lol^{-(2 + \rho)/d} < 2 c_0 \lol^{-2/d}$ and the similar bound for $B'$, which are consequences of (\ref{4.15}) i) and the corresponding bound for $B'$ ($\ell$ being large). This concludes the proof of Lemma \ref{lem4.4}.
\end{proof}

We will now gather upper bounds on eigenvalues and on volume, and combine them with the quantitative Faber-Krahn inequality (\ref{1.17}) in the course of the proof of the next proposition. We recall the definition of $\wt{\cC}_\ell$ in (\ref{3.21}), and we define the central box of $\wt{B}_\ell$ in $\wt{\cC}_\ell$ similarly as in (\ref{4.3}), namely as the closed concentric box of $\wt{B}_\ell$ with side-length $2 \lol^{1/d}$. The parameter $\alpha$ appeared in the selection made for the method of enlargement of obstacles, see Lemma \ref{lem2.2} and Remark \ref{rem4.6} below.

\begin{proposition}\label{prop4.5}
There exist $\wh{\mu} \in (1 - \alpha, 1)$ and $\wh{\eta}_0 < \wh{\eta}_1$ in $(\frac{\wh{\mu}}{d}, \frac{1}{d})$ such that for large $\ell$ on $T \cap R$, there are two boxes $\wt{B}$ and $\wt{B}'$ in $\wt{\cC}_\ell$, included in $B_\ell$, with $d_\infty (\wt{B}, \wt{B}') \ge \ell^{3/4}$, which respectively contain open balls $\wh{B}$ and $\wh{B}'$ with centers having rational coordinates that belong to the respective central boxes of $\wt{B}$ and $\wt{B}'$, and have same radius $\wh{R} = R_0 \lol^{1/d} + 2 \lol^{\wh{\eta}_1}$. These balls have the property that denoting by $\wh{B}_{\rm int}$ and $\wh{B}'_{\rm int}$ the smaller closed concentric balls with radius $\wh{R}_{\rm int} = R_0 \lol^{1/d} + \lol^{\wh{\eta}_0}$, one has
 \begin{equation}\label{4.23}
\left\{ \begin{array}{l}
| \wt{B} \, \backslash \, (\cD_\ell (\o) \cup \wh{B}_{\rm int} )| \le \lol^{\wh{\mu}},
\\[1ex]
\mbox{and a similar inequality with $\wt{B}'$ and $\wh{B}'_{\rm int}$ in place of $\wt{B}$ and $\wh{B}_{\rm int},$~~~~}
\end{array}\right.
\end{equation}
as well as (in the notation of (\ref{4.5}) and (\ref{3.9}))
\begin{equation}\label{4.24}
\left\{ \begin{array}{l}
\lob \le \lo (\wt{B}) \le \lob + \rho_\ell + \delta_\ell \le c_0 \lol^{-2/d} + 2 \gamma_2 \lol^{-3/d},
\\[1ex]
\mbox{and similar inequalities with $\wt{B}'$  in place of $\wt{B}$.}
\end{array}\right.
\end{equation}
\end{proposition}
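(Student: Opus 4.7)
The plan is to apply the quantitative Faber--Krahn inequality (\ref{1.17}) to $U = B \backslash \overline{\cD_\ell(\o)}$ and $U' = B' \backslash \overline{\cD_\ell(\o)}$, where $B, B' \in \cC_\ell^{\rm int}$ come from Lemma \ref{lem4.4}, extract approximating balls, and then position $\wh{B}, \wt{B}$ (and $\wh{B}', \wt{B}'$) around them. I describe the construction for $\wh{B}, \wt{B}$; the primed version is analogous.

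First, combining the eigenvalue upper bound (\ref{4.15}) ii), $\lo(U) \le c_0 \lol^{-2/d} (1 + O(\lol^{-1/d}))$, with the volume upper bound $|U| \le \frac{d}{\nu} \lol + \lol^{\beta'} + O((\log \log \ell)^d \lol^{1-\kappa/d})$ coming from $G \subseteq T$ (see (\ref{3.11})) and Theorem \ref{theo2.4} (used to handle $|B \cap \cB_\ell(\o)|$), together with the identity $c_0 = \lambda_d (\nu \o_d / d)^{2/d}$, I would show that the left-hand side of (\ref{1.17}) applied to $U$ is $O(\lol^{-\theta})$ for some $\theta > 0$ depending on the admissible parameters. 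Hence $A(U) = O(\lol^{-\theta/2})$, which produces a ball $B^* = \overset{\circ}{B}(x^*, R_{B^*})$ with $|B^*| = |U|$ and $|U \Delta B^*| = O(\lol^{1 - \theta/2})$; the matching lower bound on $|U|$ derived by applying classical Faber--Krahn in the other direction then forces $R_{B^*} = R_0 \lol^{1/d} (1 + o(1))$.

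Next I would locate $x^*$. Since $B$ is convex, if $x^*$ were not in $B$ a spherical cap comparison would give $|B^* \cap B| \le \frac{1}{2}|B^*|$, contradicting $|B^* \cap B| \ge |B^*| - |U \Delta B^*| = |B^*|(1 - o(1))$; the same comparison refines this to $d_\infty(x^*, \partial B) \ge R_{B^*}(1 - o(1))$, so $x^*$ sits deep inside $B$. Pick a rational $\hat x$ with $|\hat x - x^*| \le \lol^{\hat\eta_0}/4$ and define $\wh{B}, \wh{B}_{\rm int}$ concentric at $\hat x$ with radii $\wh{R}, \wh{R}_{\rm int}$; choosing $\hat\eta_0 \in (\hat\mu/d, 1/d)$ above the error exponent in $R_{B^*}$ ensures $B^* \subseteq \wh{B}_{\rm int} \subseteq \wh{B}$. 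Because $\hat x$ is deep in $B$, a direct calculation on the grid parametrizations of $\cC_\ell$ and $\wt\cC_\ell$ produces $\wt{B} \in \wt\cC_\ell$ with $B \subseteq \wt{B}$ and $\hat x$ in the central box of $\wt{B}$. The interior condition $B \in \cC_\ell^{\rm int}$ then gives $\wt{B} \subseteq B_\ell$, and the distance bound (\ref{4.14}) yields $d_\infty(\wt{B}, \wt{B}') \ge \ell^{3/4}$ for large $\ell$, since $\wt{B}, \wt{B}'$ are only slightly larger than $B, B'$.

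The eigenvalue bound (\ref{4.24}) is then immediate from $B \subseteq \wt{B}$, monotonicity, and (\ref{4.15}) i). The volume bound (\ref{4.23}) is the main computation: using $\wh{B}_{\rm int} \subseteq \wt{B}$,
\[
|\wt{B} \backslash (\cD_\ell(\o) \cup \wh{B}_{\rm int})| = |\wt{B} \backslash \cD_\ell(\o)| - |\wh{B}_{\rm int} \backslash \cD_\ell(\o)|.
\]
The first term is at most $\frac{d}{\nu}\lol + \lol^{\beta'} + O((\log \log \ell)^d \lol^{1-\kappa/d})$ from $H \subseteq T$ applied to $\wt{B} \in \wt\cC_\ell$ (see (\ref{3.22})) and Theorem \ref{theo2.4}. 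For the second, $|\wh{B}_{\rm int} \backslash \cD_\ell(\o)| \ge |U \cap \wh{B}_{\rm int}| = |U| - |U \backslash \wh{B}_{\rm int}|$, and $|U \backslash \wh{B}_{\rm int}| \le |U \Delta B^*| = O(\lol^{1-\theta/2})$ since $B^* \subseteq \wh{B}_{\rm int}$, while $|U| \ge \frac{d}{\nu} \lol (1 - o(1))$ by direct Faber--Krahn. The leading $\frac{d}{\nu}\lol$ terms cancel, leaving a bound by $\lol^{\hat\mu}$ for any $\hat\mu$ strictly greater than $\max(\beta', 1 - \theta/2, 1 - \kappa/d)$, absorbing the $\log\log\ell$ factors. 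The main obstacle is aligning the parameter constraints simultaneously: $\hat\mu \in (1 - \alpha, 1)$, $\hat\eta_0 < \hat\eta_1$ in $(\hat\mu/d, 1/d)$, and $\hat\eta_0$ above the error exponent in $R_{B^*}$. This is handled by the admissible choice of parameters made as in (3.66), p.~181 of \cite{Szni98a}.
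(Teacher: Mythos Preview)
Your proposal is correct and follows essentially the same route as the paper: invoke Lemma~\ref{lem4.4}, bound $|B\setminus\overline{\cD_\ell(\o)}|$ above via $G$ and Theorem~\ref{theo2.4}, feed the eigenvalue and volume bounds into the quantitative Faber--Krahn inequality (\ref{1.17}) to produce an approximating ball, select $\wt B\in\wt{\cC}_\ell$ containing $B$ with the ball's center in its central box, and then derive (\ref{4.23}) by combining the $\wt{\cC}_\ell$-clause of $H$ with Theorem~\ref{theo2.4} against the Faber--Krahn lower bound on $|U|$. Your decomposition $|\wt B\setminus(\cD_\ell(\o)\cup\wh B_{\rm int})|=|\wt B\setminus\cD_\ell(\o)|-|\wh B_{\rm int}\setminus\cD_\ell(\o)|$ is a slightly more direct variant of the paper's computation (\ref{4.32}), but the content is the same.

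One minor imprecision: your closing sentence attributes the compatibility of the constraints on $\wh\mu,\wh\eta_0,\wh\eta_1$ to ``the admissible choice of parameters made as in (3.66)''. That reference fixes $\alpha,\gamma,\beta,\kappa,\rho,\delta,L$ for the enlargement method; the parameters $\wh\mu,\wh\eta_0,\wh\eta_1$ are \emph{new} choices made here. What actually makes the constraints compatible is that each of $1-\alpha$, $\beta'$, $1-\kappa/d$, $1-\theta/2$ is strictly less than $1$, so one may pick $\wh\mu\in(\max\{1-\alpha,\beta',1-\kappa/d,1-\theta/2\},1)$; the interval $(\wh\mu/d,1/d)$ is then nonempty, and the radius error exponent $\tfrac1d-(1-\mu')$ (with $\mu'$ any value in $(\max(\beta',1-\kappa/d),1)$ absorbing the $\log\log\ell$ factors) is also strictly below $\tfrac1d$, so $\wh\eta_0<\wh\eta_1$ can be chosen above both thresholds. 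This is a routine check rather than something inherited from (3.66). Also, your refined estimate $d_\infty(x^*,\partial B)\ge R_{B^*}(1-o(1))$, while true, is stronger than needed: mere membership $x^*\in B$ already allows one to select $\wt B\in\wt{\cC}_\ell$ with $B\subseteq\wt B$ and $x^*$ (hence $\hat x$) in its central box, since the $\wt{\cC}_\ell$-grid has spacing $(\log\ell)^{1/d}$.
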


\begin{proof}
We first choose (recall that $\beta', \kappa, \rho$ are among the parameters selected at the beginning of Section 3)
\begin{equation}\label{4.25}
\mu' \in\big(\max \big(\beta', 1- \mbox{\f $\dis\frac{\kappa}{d}$}\big), 1\big) \;\mbox{and} \; \chi' \in \big(0, \min \big(1- \mu', \, \mbox{\f $\dis\frac{1 \wedge \rho}{d}$}\big)\big).
\end{equation}

\n
Then by Lemma \ref{lem4.4}, for large $\ell$, on $T \cap R$ we have two boxes $B, B' \in \cC^{\rm int}_\ell$, which satisfy (\ref{4.14}), (\ref{4.15}). By the inclusion $G \subseteq T$, see (\ref{3.11}), (\ref{3.24}), and the volume bound on the bad set from Theorem \ref{theo2.4}, we can further assume that
\begin{equation*}
\begin{split}
\big| B \, \backslash \, \overline{\cD_\ell(\o)} \,\big| \le & \; \big| B \, \backslash \, \big(\cD_\ell(\o) \cup \cB_\ell(\o)\big)\big| + | B \cap \cB_\ell(\o)| \le \mbox{\f $\dis\frac{d}{\nu}$} \; \lol + \lol^{\beta'} +
\\
& \; \gamma^d_1 (\log \log)^d \lol^{1-\kappa/d},
\end{split}
\end{equation*}
and a similar bound for $| B' \backslash \overline{\cD_\ell(\o)} |$.

\medskip
So, by our choice of $\mu'$ in (\ref{4.25}), we can further assume that
\begin{equation}\label{4.26}
\mbox{$| B \, \backslash \, \overline{\cD_\ell(\o)} | \le  \mbox{\f $\dis\frac{d}{\nu}$} \;\lol + \lol^{\mu '}$ and a similar inequality with $B'$ in place of $B$}.
\end{equation}
By (\ref{4.15}) i) and ii) we additionally know that
\begin{equation}\label{4.27}
\begin{array}{l}
\lo \big(B \, \backslash \, \overline{\cD_\ell(\o)} \big) \le c_0 \lol^{-2/d}  + 2 \gamma_2 \lol^{-3/d} + \lol^{-(2 + \rho)/d}
\\[1ex]
\mbox{and a similar inequality with $B'$ in place of $B$}.
\end{array}
\end{equation}

\begin{samepage}
\n
The eigenvalues in (\ref{4.27}) are bigger or equal to $\lambda_{-\frac{1}{2}\, \Delta} (B \backslash \overline{\cD_\ell(\o)})$ and $\lambda _{-\frac{1}{2}\, \Delta} (B' \backslash \overline{\cD_\ell(\o)})$ respectively. With $c_0 = \lambda_d (\nu \o_d / d)^{2/d}$, see (\ref{0.3}), and the choice of $\chi'$ in (\ref{4.25}), we can thus assume that for large $\ell$ on $T \cap R$ we have $B, B'$ in $\cC^{\rm int}_\ell$ satisfying (\ref{4.14}), (\ref{4.15}), (\ref{4.26}), (\ref{4.27}) and
\begin{equation}\label{4.28}
\begin{array}{l}
0 \le \big\{\lambda_{-\frac{1}{2}\, \Delta}  \big(B \, \backslash \, \overline{\cD_\ell(\o)} \big) \, /\, \lambda_d \big\} \; \big\{ \big| B \, \backslash \, \overline{\cD_\ell(\o)}\big| \, / \, \o_d\big\}^{2/d} - 1 \le \lol^{-\chi'}
\\[1ex]
\mbox{and similar inequalities with $B'$ in place of $B$}.
\end{array}
\end{equation}
(The first inequality in (\ref{4.28}) results from the Faber-Krahn inequality).
\end{samepage}

\medskip
By the quantitative Faber-Krahn inequality (\ref{1.17}) one can then find balls $\check{B}$ and $\check{B}'$ with centers having rational coordinates, with same respective volumes as $|B\backslash \overline{\cD_\ell(\o)}|$ and $|B'\backslash \overline{\cD_\ell(\o)}|$ and so that the ratio of the volumes of the symmetric differences $(B\backslash \overline{\cD_\ell(\o)}) \, \Delta \, \check{B}$ and $(B' \backslash \overline{\cD_\ell(\o)}) \, \Delta \, \check{B}'$ to the respective volumes $| \check{B}|$ and $| \check{B}'|$ is smaller than  $2 / \sqrt{c}_2  \linebreak \lol^{-\chi'/2}< \frac{1}{2}$. In particular, the centers of $\check{B}$ and $\check{B}'$ respectively belong to $B$ and $B'$ (otherwise the above mentioned ratios would be at least $\frac{1}{2}$).

\medskip
In addition, by (\ref{4.27}) and the Faber-Krahn inequality, due to the value of $c_0$ recalled above (\ref{4.28}), and the choice $\chi' < (1 \wedge \rho)/d$ in (\ref{4.25}), we can additionally assume that
\begin{equation}\label{4.29}
\begin{array}{l}
\big|  B \, \backslash \, \overline{\cD_\ell(\o)}  \big|  \ge \o_d \, \lambda_d^{d/2} \; \lambda_{-\frac{1}{2} \, \Delta} \big(B \,\backslash \,\overline{\cD_\ell(\o)}\big)^{-d/2} \stackrel{(\ref{4.27})}{\ge} \mbox{\f $\dis\frac{d}{\nu}$} \;\lol - \lol^{1- \chi'}
\\[1ex]
\mbox{and a similar inequality with $B'$ in place of $B$}.
\end{array}
\end{equation}

\n
Thus, combining (\ref{4.26}) and (\ref{4.29}), we have upper and lower bounds on the volumes of $B \backslash \overline{\cD_\ell(\o)}$ and $B' \backslash \overline{\cD_\ell(\o)}$ which respectively coincide with $|\check{B}|$ and $|\check{B}'|$. By the bound on the volumes of the symmetric differences stated below (\ref{4.28}), we see that, $\ell$ being large, we can assume that
\begin{equation*}
\begin{split}
\big| \big(B \, \backslash \, \overline{\cD_\ell(\o)} \big) \cap \check{B}\big| & \ge | \check{B} | \,\big(1 -  2 / \sqrt{c}_2 \,\lol^{-\chi'/2}\big)
\\[1ex]
&\hspace{-4ex} \stackrel{(\ref{4.29}),(\ref{4.26})}{\ge} \; \mbox{\f $\dis\frac{d}{\nu}$} \; \lol - \lol^{1 - \chi'} - c \lol^{1- \chi'/2},
\end{split}
\end{equation*}
with similar inequalities for $B'$, $\check{B}'$, so that
\begin{equation}\label{4.30}
\left\{ \begin{array}{l} 
\big| \big(B \, \backslash \, \overline{\cD_\ell(\o)} \big) \cap \check{B} \big|  \ge \mbox{\f $\dis\frac{d}{\nu}$} \; \lol - \ov{c} \,\lol^{1- \chi'/2}
\\[1ex]
\mbox{with a similar lower bound with $B'$ and $\check{B}'$ in place of $B$ and $B'$}.
\end{array}\right.
\end{equation}
We have thus established that for large $\ell$ on $T \cap R$ there are boxes $B$, $B'$ in $\cC_\ell^{\rm int}$, which satisfy (\ref{4.14}), (\ref{4.15}) and balls $\check{B}$ and $\check{B}'$ with centers having rational coordinates respectively belonging to $B$ and $B'$ with same respective volumes as $B \backslash \overline{\cD_\ell(\o)}$ and $B' \backslash \overline{\cD_\ell(\o)}$, which are at most $\frac{d}{\nu} \, \lol + \lol^{\mu'}$, see (\ref{4.26}), at least $\frac{d}{\nu} \, \lol - \lol^{1- \chi'}$, see (\ref{4.29}), and so that (\ref{4.30}) holds.

\medskip
We can now find boxes $\wt{B}, \wt{B}' \in \wt{\cC}_\ell$, see (\ref{3.21}), such that the centers of $\check{B}$ and $\check{B}'$ belong to the respective central boxes of $\wt{B}, \wt{B}'$, and so that $B \subseteq \wt{B}$, $B' \subseteq \wt{B}'$. Further, $\ell$ being large, we can assume that $\check{B} \subseteq \wt{B}$ and $\check{B}' \subseteq \wt{B}'$, and since $B, B' \in \cC_\ell^{\rm int}$ satisfy (\ref{4.14}), (\ref{4.15} i) that in addition
\begin{equation}\label{4.31}
\mbox{$\wt{B}, \wt{B}' \subseteq B_\ell$ satisfy (\ref{4.24}) and $d_\infty (\wt{B}, \wt{B}') \ge \mbox{\f $\dis\frac{1}{3d}$} \;\ell^{3/4} \, \lol^{1/d}$}.
\end{equation}
As we now explain, there is little volume in $\wt{B}$ outside $\check{B} \cup \cD_\ell(\o)$ and in $\wt{B}'$ outside $\check{B}' \cup \cD_\ell(\o)$ (the remaining claim (\ref{4.23}) will quickly follow). Indeed, for large $\ell$ on $T \cap R$, we can further assume that
\begin{equation}\label{4.32}
\begin{split}
\big| \wt{B}\, \backslash \, \big(\check{B}  \cup \cD_\ell(\o) \big)\big| & = | \wt{B}\, \backslash \,  \cD_\ell(\o)| - \big| \big(\wt{B}\, \backslash \,\cD_\ell(\o)\big) \cap \check{B} \big|
\\[1ex]
&\hspace{-1.5ex}  \stackrel{(\ref{4.30})}{\le} \big| \wt{B}\, \backslash \, \big( \cD_\ell(\o)  \cup \cB_\ell (\o)\big) \big| + |\wt{B} \cap \cB_\ell(\o)| -\mbox{\f $\dis\frac{d}{\nu}$} \, \lol + \ov{c} \,\lol^{1 -  \frac{\chi '}{2}}
\\[1ex]
&\hspace{-3.8ex} \stackrel{(\ref{3.22}),(\ref{2.11})}{\le}  \mbox{\f $\dis\frac{d}{\nu}$} \;\lol + \lol^{\beta'} + 2^d \,\gamma^d_1 \,(\log \log \ell)^d \,\lol^{1- \frac{\kappa}{d}} 
\\
& \qquad \quad - \mbox{\f $\dis\frac{d}{\nu}$} \; \lol +  \ov{c} \,\lol^{1 -  \frac{\chi '}{2}}
\\
& \le \lol^{\wh{\mu}}
\\
&\hspace{-3cm} \mbox{and a similar inequality with $\wt{B}', \check{B}'$ in place of $B, \check{B}$},
\end{split}
\end{equation}
where we have chosen $\wh{\mu}$ in $(0,1)$ bigger than $\max\{1 - \alpha, \beta', 1 - \kappa/d, 1 - \chi'/2\}$ (see also Remark \ref{rem4.6} below).

\medskip
As mentioned above, the balls $\check{B}$ and $\check{B}'$ have volume at most $\frac{d}{\nu} \, \lol + \lol^{\mu'}$, see (\ref{4.26}), and below (\ref{4.28}). Thus, choosing $\wh{\eta}_0 < \wh{\eta}_1$ in $(\frac{\wh{\mu}}{d}, \frac{1}{d})$, both bigger than $\frac{1}{d} - (1 - \mu')$, we see that for large $\ell$, the balls $\check{B}$ and $\check{B}'$ are contained in the balls $\wh{B}_{\rm int}$ and $\wh{B}'_{\rm int}$ with same respective centers as $\check{B}$ and $\check{B}'$ and radius equal to $\wh{R}_{\rm int} = R_0 \lol^{1/d} + \lol^{\wh{\eta}_0}$. The claim (\ref{4.3}) follows. This concludes the proof of Proposition \ref{prop4.5}.
\end{proof}

\begin{remark}\label{rem4.6} \rm The conditions $\wh{\mu} > 1 - \alpha$ and $\frac{\wh{\mu}}{d} < \wh{\eta}_0 < \wh{\eta}_1$ will be helpful in the proof of Theorem \ref{theo4.1} below when bounding the principal Dirichlet eigenvalues in $\wh{B}$ and $\wh{B}'$ as well as the size of the principal Dirichlet eigenfunctions outside $\wh{B}_{\rm int}$ and $\wh{B}'_{\rm int}$. \hfill $\square$
\end{remark}

We now proceed with the

\medskip\n
{\it Proof of Theorem \ref{theo4.1}:} We know by Proposition \ref{prop4.5} that for large $\ell$ on $T \cap R$ we have $\wt{B}, \wt{B}'$ in $\wt{\cC_\ell}$ (see (\ref{3.21})) with $d_\infty (\wt{B}, \wt{B}') \ge \ell^{3/4}$ contained in $B_\ell$ and concentric balls $\wh{B}_{\rm int}$, $\wh{B}$ and $\wh{B}'_{\rm int}, \wh{B}'$ with radii $\wh{R}_{\rm int}, \wh{R}$ and centers with rational coordinates in the respective central boxes of $\wt{B},\wt{B}'$ so that (\ref{4.23}), (\ref{4.24}) hold. We recall that $\wh{B}_{\rm int}, \wh{B}'_{\rm int}$ are closed balls and $\wh{B}, \wh{B}'$ open balls. We denote by (see below (\ref{4.2}) for the terminology)
\begin{equation}\label{4.33}
\mbox{$D_0, D'_0$ the $L_0$-boxes with same respective centers as $\wt{B}$ and $\wt{B}'$}.
\end{equation}

\n
Our main task is to show that $\lo (\wh{B})$ and $\lo (\wh{B}')$ are not much bigger than $\lo (\wt{B})$ and $\lo (\wt{B}')$ respectively, and that $\varphi_{1,\wt{D}_0,\o}$ and $\varphi_{1,D_0,\o}$ are small outside $\wh{B}$ and $\wh{B}'$ respectively. The first point combined with (\ref{4.24}) will yield (\ref{4.6}), (\ref{4.7}), and the second point will prove (\ref{4.8}).

\medskip
We begin with the first point. With $\wh{\mu}, \wh{\eta}_0 < \wh{\eta}_1$ as in Proposition \ref{prop4.5}, we apply Theorem \ref{theo2.3} with the choices $M = 2c_0$, $r = \lol^{\frac{\wh{\mu}}{d}}$, $R = \frac{1}{\sqrt{d}} \;\lol^{\wh{\eta}_1}$, $U_2 = \wt{B}$ (or $\wt{B}')$, $U_1 = \wh{B}$ (or $\wh{B}'$), $\cA = \wh{B}_{\rm int}$ (or $\wh{B}'_{\rm int}$). We note that by (\ref{4.23}) and the fact that $\frac{1-\alpha}{d} < \frac{\wh{\mu}}{d} < \wh{\eta}_0 < \wh{\eta}_1 < \frac{1}{d}$, for large $\ell$ for any $\o$ in $T \cap R$, the assumptions (\ref{2.5}) - (\ref{2.7}) and (\ref{2.9}), (\ref{2.10}) are fulfilled. We then set
\begin{equation}\label{4.34}
\wh{\eta} = ( \wh{\eta}_1 - \wh{\eta}_0) / 2,
\end{equation}
and find by Theorem \ref{theo2.3}, and the upper bounds on $\lo (\wt{B})$ and $\lo (\wt{B}')$ in (\ref{4.24}), that for large $\ell$ on $T \cap R$ (with the above choices for $R$ and $r$):
\begin{equation}\label{4.35}
\left\{ \begin{array}{l}
\lo (\wh{B}) \le \lo (\wt{B}) + 2c_0 \lol^{-2/d} \,\exp\big\{- c_3 \,\big[\mbox{\f $\dis\frac{R}{4r}$}\big]\big\} \le \lo (\wt{B}) + \exp\{ - \lol^{\wh{\eta}}\}
\\[1ex]
\mbox{and similar inequalities with $\wh{B}', \wt{B}'$ in place of $\wh{B}, \wt{B}$}.
\end{array}\right.
\end{equation}
Taking into account the inclusions $\wh{B} \subseteq D_0 \subseteq \wt{B} \subseteq B_\ell$ and $\wh{B}' \subseteq D'_0 \subseteq \wt{B}' \subseteq B_\ell$ for large $\ell$, the claims (\ref{4.6}), (\ref{4.7}) follow.

\medskip
We now turn to the second point, namely the proof of (\ref{4.8}). Recall that $\wh{\eta}_0 > \wh{\mu}/d > (1-\alpha)/d$. Using Lemma \ref{lem2.2}, we assume from now on that $\ell$ is large enough so that for any $y \in \overline{\cD_\ell(\o)}$, the Brownian motion starting at $y$ enters the obstacle set before moving at $| \cdot |_\infty$-distance $\frac{1}{2} \;\lol^{\wh{\eta}_0}$ with probability at least $\frac{1}{2}$, see (\ref{2.4}):
\begin{equation}\label{4.36}
P_y [H_{Obs_\o} < \tau_{\frac{1}{2} \, \lol^{\wh{\eta}_0}}] \ge \fr, \;\mbox{for all $y \in \overline{\cD_\ell(\o)}$}.
\end{equation}

\n
We prove (\ref{4.8}) in the case of $D_0$ and $\wh{B}$ (the case of $D'_0$ and $\wh{B}'$ is handled in the same fashion). By (\ref{4.23}) for any $x \in \wt{B} \backslash \wh{B}_{\rm int}$ the $| \cdot |_\infty$-ball with center $x$ and volume $2 (\log \ell)^{\wh{\mu}}$ has at least half of its volume occupied by $\wt{B}^c \cup \wh{B}_{\rm int} \cup \cD_\ell (\o)$. Thus, Brownian motion starting at $x$ enters $\wt{B}^c \cup \wh{B}_{\rm int} \cup \ov{\cD_\ell (\o)}$ before exiting the concentric box of double radius with a probability at least $c(d) > 0$. Since $\wh{\mu} / d < \wh{\eta}_0$ and $\ell$ is large, the strong Markov property and (\ref{4.36}) shows that the Brownian motion starting at $x$ exits $\wt{B}_\o \backslash \wh{B}_{\rm int}$ (i.e. enters $\wt{B}^c \cup \wh{B}_{\rm int} \cup Obs_\o)$ before moving at $| \cdot |_\infty$-distance $\lol^{\wh{\eta}_0}$ with a non-degenerate probability, namely:
\begin{equation}\label{4.37}
\mbox{for $x \in \wt{B} \backslash \wh{B}_{\rm int}$, $P_x[T_{\wt{B}_\o \backslash \wh{B}_{\rm int}} < \tau] \ge c_5(d) \in (0,1)$,}
\end{equation}
where we have set $\tau = \tau_{\lol^{\wh{\eta}_0}}$ in the notation of (\ref{2.4}).

\medskip
We now write $\varphi$ as a shorthand for $\varphi_{1,D_0,\o}$, see (\ref{1.14}). Since $\wh{\eta}_0 < \frac{1}{d}$ and $\ell$ is large, using (\ref{4.6}), we can further assume that (see (\ref{1.5}) for notation):
\begin{equation}\label{4.38}
\lo (D_0) < 2 c_0 \lol^{-2/d} < \fr \; \lol^{-2 \wh{\eta}_0} \, \lambda_{-\frac{1}{2} \, \Delta} (B_1) \big(= \fr \, \lambda_{-\frac{1}{2}\, \Delta} (B_{\lol^{\wh{\eta}_0}})\big).
\end{equation}
Then, for any $x \in D_{0,\o}$ (see (\ref{1.4})), with $\varphi(x) > 0$ (or equivalently such that the connected component of $D_{0,\o}$ containing $x$ has a principal Dirichlet eigenvalue of $-\frac{1}{2} \, \Delta$ equal to $\lo (D_0)$, see below (\ref{1.14})), we have by (\ref{1.12}):
\begin{equation}\label{4.39}
\varphi(x) = E_x [ \varphi(X_\tau) \, \exp\{ \lo(D_0) \, \tau\}, \; \tau < T_{D_{0,\o}}].
\end{equation}
Denote by $\tau^k$, $k \ge 0$, the iterates of the stopping time $\tau$, that is
\begin{equation}\label{4.40}
\mbox{$\tau^0 = 0, \tau^1 = \tau$, and $\tau^{k+1} = \tau^k + \tau \circ \theta_{\tau^k}$, for $k \ge 1$}
\end{equation}
(with $(\theta_t)_{t \ge 0}$ the canonical shift, see below (\ref{1.7})). Using the strong Markov property and induction, it then follows that
\begin{equation}\label{4.41}
\varphi(x) = E_x[\varphi(X_{\tau^k}) \, \exp\{\lo(D_0) \, \tau^k\}, \; \tau^k < T_{D_{0,\o}}], \; \mbox{for all $k \ge 1$}.
\end{equation}

\n
When in addition $x \in D_{0,\o} \backslash \wh{B}$ and $k\lol^{\wh{\eta}_0} < \frac{1}{\sqrt{d}} \; \lol^{\wh{\eta}_1} \; (\le d_\infty (x, \wh{B}_{\rm int}))$, we find that, with $\wt{\rm sup}_z$ denoting the supremum over $z$ in $D_{0,\o}$ with $d_\infty (z, \wh{B}_{\rm int}) > \lol^{\wh{\eta}_0}$,
\begin{equation}\label{4.42}
\begin{split}
\varphi(x) & \stackrel{(\ref{4.41})}{\le} \|\varphi \|_\infty \, E_x [e^{\lambda_{1,\o}(D_0) \, \tau^k}, \tau^k < T_{D_{0,\o}}]
\\[1ex]
&\hspace{-3ex}  \stackrel{\rm strong \; Markov}{\le}  \|\varphi \|_\infty \; (\wt{\rm sup}_z \;E_z [e^{\lo (D_0) \, \tau}, \tau < T_{D_{0,\o}}])^k
\\[1ex]
&\hspace{-4ex} \stackrel{\rm Cauchy-Schwarz}{\le} \| \varphi \|_\infty \, (\wt{\rm sup}_z \;E_z [e^{2\lo (D_0) \, \tau}] \;P_z [\tau < T_{D_{0,\o}}])^{k/2}.
\end{split}
\end{equation}
Note that for $z \in D_{0,\o}$ with $d_\infty (z, \wh{B}_{\rm int}) > \lol^{\wh{\eta}_0}$ we have
\begin{equation}\label{4.43}
P_z [\tau < T_{D_{0,\o}}] = P_z [\tau < T_{D_{0,\o} \backslash \wh{B}_{\rm int}}] \stackrel{(\ref{4.37})}{\le} (1 - c_5).
\end{equation}
Thus, coming back to (\ref{4.42}), using translation invariance and scaling for the expectation in the last line of (\ref{4.42}), we find for large $\ell$ (see (\ref{2.4}) for notation):
\begin{equation}\label{4.44}
\begin{split}
\varphi(x) &\le  \|\varphi \|_\infty \, \big(E_0 [e^{2\lambda_{1,\o}(D_0)\lol^{2 \wh{\eta}_0} \, \tau_1}] (1 - c_5)\big)^{k/2}
\\
&\hspace{-3.8ex}  \stackrel{(\ref{1.11}),(\ref{4.38})}{\le} c(d,\nu) \lol^{-1/2} \, \big(E_0 [\exp\{4 c_0 \lol^{2 (\wh{\eta}_0 - \frac{1}{d})} \,\tau_1\}] \,(1-c_5)\big)^{k/2}
\\
&\hspace{-1ex} \stackrel{\wh{\eta}_0 < \frac{1}{d}}{\le} c(d,\nu) \lol^{-1/2} \,\big(1 - \mbox{\f $\dis\frac{c_5}{2}$}\big)^{k/2}, \; \mbox{for $x \in D_{0,\o} \backslash \wh{B}$ and $k < \mbox{\f $\dis\frac{1}{\sqrt{d}} $}\; \lol^{\wh{\eta}_1 - \wh{\eta}_0}$}.
\end{split}
\end{equation}

\n
With $\wh{\eta}$ as in (\ref{4.34}), and $\ell$ being large, we see that
\begin{equation}\label{4.45}
\varphi_{1,D_0,\o} \le e^{- \lol^{\wh{\eta}}} \; \mbox{on} \; D_{0, \o} \backslash \wh{B}.
\end{equation}
A similar bound holds for $\varphi_{1,D'_0,\o}$ on $D'_{0,\o} \backslash \wh{B}'$ so that (\ref{4.8}) is proved. This concludes the proof of Theorem \ref{theo4.1}. \hfill $\square$

\begin{remark}\label{rem4.7} \rm 1) The proof of Theorem \ref{theo4.2} is simpler, but similar to the proof of Theorem \ref{theo4.1}. One shows the statement corresponding to Lemma \ref{lem4.4} with one single box $B$ in $\cC^{\rm int}_\ell$ and the statement corresponding to (\ref{4.15}) i), ii) with $\rho_\ell$ set to $0$ (the item i) already follows from $T \subseteq E \cap F$, see (\ref{3.9}), (\ref{3.13})). The statement corresponding to Proposition \ref{prop4.5} now involves a single box $\wt{B}$ and a single ball $\wh{B}_{\rm int}$, with $\rho_\ell$ set to $0$ in the statement corresponding to (\ref{4.24}). The proof then proceeds as that of Theorem \ref{theo4.1} below Remark \ref{rem4.6}.

\medskip\n
2) As already mentioned, we will mainly use (\ref{4.6}) and (\ref{4.8}) of Theorem \ref{theo4.1} in what follows. The statement (\ref{4.7}) is there for clarity and completeness. \hfill $\square$
\end{remark}

\section{Tuning and resonance control}
\setcounter{equation}{0}

In this section we derive an asymptotic upper bound on the resonance event $R$, see (\ref{4.5}), in the large $\ell$ limit, in terms of a quantity, which measures the deconcentration of the law of $\lo (D_0)$ in a suitably tuned regime of low values, with an additional information on the corresponding eigenfunction $\varphi_{1,D_0,\o}$, see Proposition \ref{prop5.3}. In the next section we will prove deconcentration estimates, which will bound the above quantity,  and lead to a lower bound on the spectral gap. We recall the definition of $D_0$-boxes in (\ref{4.2}). Their side-length is $L_0 = 10 (\lceil R_0   \rceil + 1) (\log \ell)^{1/d}$, see (\ref{4.1}).

\medskip
Our first task in this section is to suitably tune the ``low level'' of $\lo(D_0)$ that is pertinent for our purpose. We first need some notation. For $\ell > 10$ we define
\begin{align}
&\mbox{$\wh{\cC}_\ell$ the collection of $D_0$-boxes included in $B_\ell$}, \label{5.1}
\\[1ex]
& \mbox{$\wh{\cC}_\ell^*$ the sub-collection of $\wh{\cC}_\ell$ consisting of boxes $D_{0,q} \subseteq B_\ell$} \label{5.2}
\\
&\mbox{such that $q \in 20 (\lceil R_0 \rceil + 1) \, \IZ^d$ (see (\ref{4.2}) for notation)}. \nonumber
\end{align}

\n
Thus, for large $\ell$,
\begin{align}
&\mbox{the boxes in $\wh{\cC}_\ell^*$ have mutual $| \cdot |_\infty$-distance at least $(\log \ell)^{1/d}$}, \label{5.3}
\\[2ex]
&\mbox{$\wh{\cC}_\ell$ is covered by $\wh{c}$ translates of $\wh{\cC}_\ell^*$ (the choice $\wh{c} = \{40 (\lceil R_0 \rceil + 1)\}^d$ will do)}. \label{5.4}
\end{align}
Also, given an $L_0$-box $D_0$, we write
\begin{equation}\label{5.5}
\mbox{$D_0^{\rm int}$ for the closed concentric box to $D_0$ with side-length $(2 \lceil R_0 \rceil + 4) \lol^{1/d}$}.
\end{equation}
Note that for large $\ell$, in the terminology of (\ref{4.3}), with $\wh{R}$ as above (\ref{4.7}),
\begin{equation}\label{5.6}
\mbox{any ball with center in the central box of $D_0$ and radius $\wh{R}$ is contained in $D_0^{\rm int}$}.
\end{equation}

\n
To specify the relevant low levels of $\lo (D_0)$ for our purpose, we further pick (a large)
\begin{equation}\label{5.7}
\Gamma > 0.
\end{equation}

\n
We will eventually let $\Gamma$ tend to infinity in the next section.

\medskip
Note that $\IP [\lo (D_0) \le t]$ is a non-decreasing, right-continuous function of $t$ in $\IR_+$, which takes the value $0$ for $t = 0$ (actually, for any $t < \lambda_{-\frac{1}{2}\, \Delta}(D_0))$, which tends to $\IP[\lo (D_0) < \infty] = \IP[D_{0,\o} \not= \emptyset]$ as $t$ tends to infinity (see  (\ref{1.4}) for notation). We know from (\ref{1.6}) and monotone convergence that this last quantity tends to $1$ as $\ell$ (and hence $L_0$) goes to infinity. Thus, for $\ell \ge c_6(d,\nu,a, \Gamma)$ so that $\IP[\lo (D_0) < \infty] > \Gamma / |\wh{\cC}_\ell^*|$, we introduce the following  quantile of the law of $\lod$:
\begin{equation}\label{5.8}
t_\ell (\Gamma) = \inf\{ t \ge 0; \, \IP[\lod \le t] \ge \Gamma \, / \, |\wh{\cC}_\ell^*| \},
\end{equation}
so that writing $t_\ell$ as a shorthand for $t_\ell (\Gamma)$, one has for such $\ell$
\begin{equation}\label{5.9}
\left\{ \begin{array}{rl}
{\rm i)} & \IP[ \lod \le t_\ell] \ge  \Gamma \, / \, |\wh{\cC}_\ell^*| ,
\\[2ex]
{\rm ii)} & \IP[ \lod < t_\ell]  \le  \Gamma \,/ \, |\wh{\cC}_\ell^*| .
\end{array}\right.
\end{equation}
We also record the value
\begin{equation}\label{5.10}
s_\ell = \lambda_{-\frac{1}{2} \, \Delta}(D_0) =  \mbox{\f $\dis\frac{d\, \pi^2}{2}$} \;L^{-2}_0, \;\mbox{for which} \; \IP [\lod < s_\ell] = 0.
\end{equation}
The next lemma shows that for large $\ell$ the events $\{\min_{D_0 \in \wh{\cC}_\ell} \lod \ge t_\ell\}$ and $\{\min_{D_0 \in \wh{\cC}_\ell}$ $\lod \le t_\ell\}$ occur with a probability bounded away from $0$.

\begin{lemma}\label{lem5.1}
\begin{equation}\label{5.11}
\liminf_{\ell \r \infty} \; \IP[\min_{D_0 \in \wh{\cC}_\ell} \, \lod \ge t_\ell ] \ge e^{-\wh{c} \; \Gamma} (\mbox{with $\wh{c}$ from (\ref{5.4}))},
\end{equation}
and for large $\ell$,
\begin{equation}\label{5.12}
\IP [ \min_{D_0 \in \wh{\cC}_\ell} \; \lod > t_\ell ] \le e^{-\Gamma}.
\end{equation}
\end{lemma}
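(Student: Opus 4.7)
The plan is to exploit the near-independence of the local eigenvalues $\lod$ over well-separated boxes, combined with the defining quantile inequalities (\ref{5.9}). The key structural input is (\ref{5.3}): for $\ell$ large, the boxes of $\wh{\cC}_\ell^*$ are pairwise at $|\cdot|_\infty$-distance at least $(\log \ell)^{1/d} > 2a$, so their closed $a$-neighborhoods are disjoint. Since $\lod$ is a measurable function of the restriction of $\omega$ to the $a$-neighborhood of $D_0$ (only obstacles whose centers lie within distance $a$ of $D_0$ can affect $D_{0,\o}$), the family $(\lod)_{D_0 \in \wh{\cC}_\ell^*}$ is independent under $\IP$, and by translation invariance of $\IP$ identically distributed.

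For (\ref{5.12}), since $\wh{\cC}_\ell^* \subseteq \wh{\cC}_\ell$, I would write
\[
\IP\bigl[\min_{D_0 \in \wh{\cC}_\ell} \lod > t_\ell \bigr] \;\le\; \IP\Big[\bigcap_{D_0 \in \wh{\cC}_\ell^*} \{\lod > t_\ell\}\Big] \;=\; \prod_{D_0 \in \wh{\cC}_\ell^*} \IP[\lod > t_\ell],
\]
invoking independence in the last step. By (\ref{5.9}) i), each factor is bounded by $1 - \Gamma/|\wh{\cC}_\ell^*|$, and the elementary inequality $1 - x \le e^{-x}$ yields the bound $e^{-\Gamma}$ on the whole product.

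For (\ref{5.11}), independence alone is not enough because the event involves all of $\wh{\cC}_\ell$, whose boxes need not be disjoint. I would instead invoke the FKG (Harris) inequality for Poisson point processes: each event $\{\lod \ge t_\ell\}$ is increasing in $\omega$ (adding a point enlarges $Obs_\o$, shrinks $D_{0,\o}$, and can only raise its Dirichlet eigenvalues, with the convention $\lod = \infty$ on $\{D_{0,\o} = \phi\}$). Therefore
\[
\IP\Big[\bigcap_{D_0 \in \wh{\cC}_\ell} \{\lod \ge t_\ell\}\Big] \;\ge\; \prod_{D_0 \in \wh{\cC}_\ell} \IP[\lod \ge t_\ell] \;\ge\; \bigl(1 - \Gamma/|\wh{\cC}_\ell^*|\bigr)^{|\wh{\cC}_\ell|},
\]
using stationarity and (\ref{5.9}) ii). By (\ref{5.4}) one has the cardinality bound $|\wh{\cC}_\ell| \le \wh{c}\, |\wh{\cC}_\ell^*|$, so the right-hand side is at least $(1 - \Gamma/|\wh{\cC}_\ell^*|)^{\wh{c}|\wh{\cC}_\ell^*|}$, which converges to $e^{-\wh{c}\,\Gamma}$ as $\ell \to \infty$ (since $|\wh{\cC}_\ell^*| \to \infty$), yielding (\ref{5.11}).

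There is no serious obstacle here; the argument is essentially bookkeeping around independence, FKG, and the defining quantile inequalities. The one point worth verifying carefully is that the disjointness of $a$-neighborhoods (needed for independence) and the monotonicity of $\o \mapsto \lod$ (needed for FKG) both hold with the stated conventions when $D_{0,\o}$ may be empty, which is immediate from (\ref{1.9}).
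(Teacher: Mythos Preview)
Your proof is correct and follows essentially the same approach as the paper: independence over $\wh{\cC}_\ell^*$ together with (\ref{5.9}) i) for (\ref{5.12}), and the Harris--FKG inequality together with (\ref{5.9}) ii) for (\ref{5.11}). The only cosmetic difference is that for (\ref{5.11}) the paper first groups $\wh{\cC}_\ell$ into the $\wh{c}$ translates of $\wh{\cC}_\ell^*$, applies FKG to those $\wh{c}$ increasing events, and then invokes independence within each translate, whereas you apply FKG directly to all $|\wh{\cC}_\ell|$ single-box events and then use the cardinality bound $|\wh{\cC}_\ell| \le \wh{c}\,|\wh{\cC}_\ell^*|$ implied by (\ref{5.4}); both routes land on $(1 - \Gamma/|\wh{\cC}_\ell^*|)^{\wh{c}\,|\wh{\cC}_\ell^*|}$.
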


\begin{proof}
We first prove (\ref{5.11}). Note that $\lod$ is a non-decreasing function of $\o$. Thus, by the Harris-FKG inequality, see Theorem 20.4, p.~217 of \cite{LastPenr18}, translation invariance, and (\ref{5.4}), we have for large $\ell$
\begin{equation}\label{5.13}
\begin{array}{l}
\IP [ \min_{D_0 \in \wh{\cC}_\ell} \, \lod \ge t_\ell] \ge \IP [ \min_{D_0 \in \wh{\cC}_\ell^*} \, \lod  \ge t_\ell]^{\wh{c}}\; \stackrel{\rm independence}{=}
\\
\IP [ \lod \ge t_\ell]^{\,\wh{c}\, |\wh{\cC}_\ell^*|} = (1- \IP[ \lod < t_\ell])^{\wh{c}\, |\wh{\cC}_\ell^*|}   \stackrel{(\ref{5.9}) ii)}{\ge}
\\[1ex]
(1 - \Gamma \, / \, | \wh{\cC}_\ell^*| )^{\wh{c}\, |\wh{\cC}_\ell^*|}   \underset{\ell \r \infty}{\longrightarrow} e^{-\wh{c} \,\Gamma}.
\end{array}
\end{equation}
This proves (\ref{5.11}). As for (\ref{5.12}), we note that for large $\ell$,
\begin{equation}\label{5.14}
\begin{array}{l}
\IP [ \min_{D_0 \in \wh{\cC}_\ell} \, \lod > t_\ell] \le \IP [ \min_{D_0 \in \wh{\cC}_\ell^*} \, \lod > t_\ell]\;  \underset{\rm transl.\; inv.}{\stackrel{\rm indep.}{=}}
\\
( 1- \IP [ \lod \le t_\ell])^{|\wh{\cC}_\ell^*|}   \stackrel{(\ref{5.9})\, {\rm i)}}{\le} (1 - \Gamma \, / \, | \wh{\cC}_\ell^*| )^{|\wh{\cC}_\ell^*|}  \le e^{-\Gamma}
\end{array}
\end{equation}

\n
(using $1 - s \le e^{-s}$ for all $s$ and $\Gamma / |\wh{\cC}_\ell^*| \le 1$ in the last step). This shows (\ref{5.12}) and hence  Lemma \ref{lem5.1} is proved.
\end{proof}

In the next lemma we collect some coarse asymptotic information on $t_\ell(\Gamma)$ as $\ell$ goes to infinity. We recall $\chi \in (0,d)$ from (\ref{0.4}) and $\gamma_2 > 0$ from (\ref{3.9}).

\begin{lemma}\label{lem5.2}
Given any $\Gamma > 0$, then for large $\ell$
\begin{equation}\label{5.15}
c_0 \lol^{-2/d} - \lol^{-(2 + \chi)/d} \le t_\ell (\Gamma) \le c_0 \lol^{-2/d} + 2 \gamma_2 \lol^{-3/d},
\end{equation}
and in particular
\begin{equation}\label{5.16}
t_\ell (\Gamma) \sim c_0 \lol^{-2/d}, \; \mbox{as $\ell \r \infty$}.
\end{equation}
\end{lemma}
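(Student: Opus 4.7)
My plan is to prove the two inequalities in (\ref{5.15}) separately; the asymptotic (\ref{5.16}) then follows immediately since both bounds agree with $c_0(\log\ell)^{-2/d}$ to leading order as $\ell\to\infty$.

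For the upper bound, set $s := c_0(\log\ell)^{-2/d} + 2\gamma_2(\log\ell)^{-3/d}$; by (\ref{5.8}) it suffices to show $\IP[\lambda_{1,\omega}(D_0) \le s] \ge \Gamma/|\wh{\cC}_\ell^*|$ for $\ell$ large. I would partition $B_\ell$ into $M := \lceil 2\wh{c}\,\Gamma\rceil$ disjoint sub-boxes $B^{(1)},\dots,B^{(M)}$, each a translate of $B_{\ell'}$ with $\ell' \sim \ell/M^{1/d}$. Applying Theorem \ref{theo4.2} via translation invariance of $\IP$ to each $B^{(i)}$ yields, on a high-probability event $T^{(i)}$ depending only on the Poisson cloud in $B^{(i)}$, a box $D_0^{(i)} \subseteq B^{(i)}$ (in the grid $\wh{\cC}_\ell$, modulo the scale-matching subtlety discussed below) with
\[
\lambda_{1,\omega}(D_0^{(i)}) \le c_0(\log\ell')^{-2/d} + 2\gamma_2(\log\ell')^{-3/d}.
\]
Since $M$ is a fixed constant, the discrepancy between this bound and $s$ is of order $(\log\ell)^{-(1+2/d)} = o((\log\ell)^{-3/d})$, so the bound is $\le s$ for $\ell$ large. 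As the events $T^{(i)}$ depend on disjoint regions, they are jointly independent and $\IP[\bigcap_i T^{(i)}] \to 1$. Setting $X := \#\{D_0 \in \wh{\cC}_\ell : \lambda_{1,\omega}(D_0) \le s\}$, this gives $\IE[X] \ge M(1-o(1))$; by translation invariance $\IE[X] = |\wh{\cC}_\ell|\cdot\IP[\lambda_{1,\omega}(D_0) \le s]$, and $|\wh{\cC}_\ell| \le \wh{c}\,|\wh{\cC}_\ell^*|$ by (\ref{5.4}), yielding $\IP[\lambda_{1,\omega}(D_0) \le s] \ge 2\Gamma(1-o(1))/|\wh{\cC}_\ell^*| \ge \Gamma/|\wh{\cC}_\ell^*|$ for $\ell$ large.

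For the lower bound, set $s' := c_0(\log\ell)^{-2/d} - (\log\ell)^{-(2+\chi)/d}$; the inclusion $D_0 \subseteq B_\ell$ combined with domain monotonicity of Dirichlet eigenvalues gives $\lambda_{1,\omega}(D_0) \ge \lambda_{1,\omega}(B_\ell)$, hence $\IP[\lambda_{1,\omega}(D_0) \le s'] \le \IP[\lambda_{1,\omega}(B_\ell) \le s']$. The latter is controlled by the quantitative per-$\ell$ estimate underlying the Borel--Cantelli derivation of the lower bound in (\ref{0.4}) in Theorem 4.6 of \cite{Szni98a}: combined with the Lifshitz tail (\ref{0.9}), this yields $\IP[\lambda_{1,\omega}(B_\ell) \le s'] \le \exp\{-c(\log\ell)^{1-\chi/d}\}$ for some $c > 0$ and $\ell$ large, a decay far faster than $\Gamma/|\wh{\cC}_\ell^*| \sim c'\Gamma(\log\ell)/\ell^d$, which establishes the claim.

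The main obstacle lies in the upper bound --- specifically, reconciling the boxes produced by Theorem \ref{theo4.2} at the reduced scale $\ell'$ (of side $L_0(\ell') < L_0(\ell)$ and centered on the $(\log\ell')^{1/d}\IZ^d$ lattice) with the grid collection $\wh{\cC}_\ell$ appearing in (\ref{5.1}). This can be handled either by a mild generalization of Theorem \ref{theo4.2} allowing the selected $D_0$-box to retain the outer-scale size $L_0(\ell)$ (the proof goes through since the geometric slack $L_0 \gg R_0(\log\ell)^{1/d}$ is preserved), or by monotonicity combined with a translation-averaging argument exploiting the invariance of $p = \IP[\lambda_{1,\omega}(D_0) \le s]$ under translation of $D_0$.
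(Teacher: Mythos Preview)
Your lower-bound argument contains a genuine error. You claim that $\IP[\lambda_{1,\omega}(B_\ell)\le s']\le\exp\{-c(\log\ell)^{1-\chi/d}\}$ is ``a decay far faster than $\Gamma/|\wh{\cC}_\ell^*|\sim c'\Gamma(\log\ell)/\ell^d$'', but this comparison goes the wrong way: since $\chi\in(0,d)$ we have $0<1-\chi/d<1$, so $(\log\ell)^{1-\chi/d}=o(\log\ell)$ and $\exp\{-c(\log\ell)^{1-\chi/d}\}$ decays \emph{slower} than any negative power of $\ell$. The detour through $B_\ell$ costs you precisely the factor $\ell^{-d}$ that you need. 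One could repair this by estimating $\IP[\lambda_{1,\omega}(D_0)\le s']$ directly at the $D_0$-scale (the Lifshitz heuristic then gives the extra $\ell^{-d}$), but that is a separate argument which you have not supplied.

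Your upper-bound route is plausible but, as you yourself note, leaves a scale-matching issue between $L_0(\ell')$-boxes and the grid $\wh{\cC}_\ell$ that is only sketched, not closed.

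The paper avoids both difficulties by a much shorter indirect argument that exploits Lemma~\ref{lem5.1}. For the upper bound: by Theorem~\ref{theo4.2} and (\ref{3.25}), with probability tending to $1$ one has $\min_{D_0\in\wh{\cC}_\ell}\lambda_{1,\omega}(D_0)\le c_0(\log\ell)^{-2/d}+2\gamma_2(\log\ell)^{-3/d}$; but by (\ref{5.11}) the same minimum is $\ge t_\ell$ with probability at least $e^{-\wh{c}\Gamma}/2>0$ for large $\ell$. These two events must overlap, forcing $t_\ell$ below the stated bound. For the lower bound: by (\ref{0.4}), with probability tending to $1$, $\min_{D_0\in\wh{\cC}_\ell}\lambda_{1,\omega}(D_0)\ge\lambda_{1,\omega}(B_\ell)\ge s'$; but by (\ref{5.12}) the same minimum is $\le t_\ell$ with probability at least $1-e^{-\Gamma}>0$, forcing $t_\ell\ge s'$. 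No quantitative per-$\ell$ tail estimate is needed; only the qualitative fact that two events of positive (respectively, near-one) limiting probability must intersect.
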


\begin{proof}
We only need to prove (\ref{5.15}) since (\ref{5.16}) is an immediate consequence of (\ref{5.15}). We begin with the right inequality in (\ref{5.15}) and argue as follows. By Theorem \ref{theo4.2} (see (\ref{4.9})) and (\ref{3.25}), with probability tending to $1$ as $\ell$ goes to infinity, $\min_{\wh{\cC}_\ell} \lod$ is at most $c_0 \lol^{-2/d} + 2 \gamma_2 \lol^{-3/d}$. However, by (\ref{5.11}), for large $\ell$, the same quantity is at least $t_\ell (\Gamma)$ with probability bigger or equal to $e^{-\wh{c}\;\Gamma} / 2$. The right inequality in (\ref{5.15}) thus holds for large $\ell$. As for the left inequality in (\ref{5.15}), we note that by (\ref{0.4}), with probability tending to $1$ as $\ell$ goes to infinity, $\min_{D_0 \in \wh{\cC}_\ell} \lod \ge \lob \ge c_0 \lol^{-2/d} - \lol^{-(2 + \chi)/d}$, whereas by (\ref{5.12}), with probability at least $1 - e^{-\Gamma}$ when $\ell$ is large, $\min_{D_0 \in \wh{\cC}_\ell} \lod \le t_\ell (\Gamma)$. The left inequality of (\ref{5.15}) follows. This completes the proof of Lemma \ref{lem5.2}.
\end{proof}

We now come to the main result of this section. It provides an asymptotic upper bound on the probability of the resonance event $R$, see (\ref{4.5}), in terms of the quantity $\Sigma$ in (\ref{5.19}) below. It will play a central role in the ``deconcentration approach'' to the lower bound on the spectral gap developed in the next section. Recall $\wh{\eta} \in (0, \frac{1}{d})$ in Theorem \ref{theo4.1}, $\rho_\ell = \sigma \lol^{-(1+2/d)}$ from (\ref{4.5}), and $\delta_\ell = \lol^{-(2+ 2/d)}$ from (\ref{3.9}). We now set
\begin{equation}\label{5.17}
\ve_\ell = \rho_\ell + \delta_\ell + e^{-\lol^{\wh{\eta}}}.
\end{equation}
We also recall the notation (\ref{5.4}) for $\wh{c}$, (\ref{5.5}) for $D^{int}_0$, $s_\ell$ from (\ref{5.8}) and remind that $t_\ell$ is a shorthand for $t_\ell(\Gamma)$, see below (\ref{5.8}).

\begin{proposition}\label{prop5.3}
For any $\Gamma > 0$ one has
\begin{align}
&\limsup\limits_{\ell \r \infty} \; \IP[R] \le e^{-\Gamma}  + \wh{c} \, \Gamma \,\Sigma + \Sigma^2, \; \mbox{where}\label{5.18}
\\[1ex]
& \Sigma =  \limsup\limits_{\ell \r \infty}  \;|\wh{\cC}_\ell | \; \sup\limits_{t \in [s_\ell, t_\ell]} \; \IP[\lod \in [t, t + \ve_\ell], \; \varphi_{1,D_0,\o} \le e^{-\lol^{\wh{\eta}}} \; \mbox{on} \; D_0 \backslash D_0^{\rm int}]. \label{5.19}
\end{align}
\end{proposition}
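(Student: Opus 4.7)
The plan is to decompose $\IP[R]$ using the typical-configuration event $T$ from (\ref{3.24}) and the event $M = \{\min_{D_0 \in \wh{\cC}_\ell}\lo(D_0) > t_\ell\}$ of Lemma~\ref{lem5.1}, writing
\[
\IP[R] \leq \IP[T^c] + \IP[M] + \IP[T \cap R \cap M^c].
\]
By (\ref{3.25}), $\IP[T^c] \to 0$ as $\ell\to\infty$, while (\ref{5.12}) yields $\limsup_\ell \IP[M] \leq e^{-\Gamma}$. Everything reduces to bounding the third term by $\wh{c}\,\Gamma\,\Sigma + \Sigma^2$ in the limsup.

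On $T \cap R \cap M^c$, Theorem~\ref{theo4.1} provides two $L_0$-boxes $D_0, D_0' \in \wh{\cC}_\ell$ with $d_\infty(D_0, D_0') \geq \ell^{3/4}$, satisfying $\lo(D_0), \lo(D_0') \in [\lo(B_\ell), \lo(B_\ell) + \ve_\ell]$, together with the localization $\varphi_{1,D_0,\o}, \varphi_{1,D_0',\o} \leq e^{-\lol^{\wh\eta}}$ outside $D_0^{\rm int}$ and $(D_0')^{\rm int}$ respectively (using (\ref{4.8}) and (\ref{5.6})). On $M^c$, $\lo(B_\ell) \leq t_\ell$, hence $\lo(D_0), \lo(D_0') \leq t_\ell + \ve_\ell$. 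Labeling so that $\lo(D_0) \leq \lo(D_0')$, these requirements define an event $\cE(D_0, D_0')$, and $T \cap R \cap M^c \subseteq \bigcup \cE(D_0, D_0')$, the union ranging over ordered pairs of distinct boxes in $\wh{\cC}_\ell$ at mutual $|\cdot|_\infty$-distance at least $\ell^{3/4}$. For such pairs, the $a$-neighborhoods of $D_0$ and $D_0'$ are disjoint when $\ell$ is large, so by independence of the Poisson cloud on disjoint regions, $(\lo(D_0), \varphi_{1,D_0,\o})$ and $(\lo(D_0'), \varphi_{1,D_0',\o})$ are independent.

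The per-pair estimate rests on a case split on whether $\lo(D_0) \leq t_\ell$ (Case~1) or $\lo(D_0) \in (t_\ell, t_\ell + \ve_\ell]$ (Case~2). With
\[
p_\ell = \sup_{t \in [s_\ell, t_\ell]}\IP[\lo(D_0) \in [t, t+\ve_\ell],\ \varphi_{1,D_0,\o} \leq e^{-\lol^{\wh\eta}} \text{ on } D_0\setminus D_0^{\rm int}],
\]
so that $\Sigma = \limsup_\ell |\wh{\cC}_\ell|\,p_\ell$ by translation invariance and (\ref{5.19}), conditioning on the $D_0$-data and using independence to integrate out the $D_0'$-factor gives $\IP[\cE \cap \{\lo(D_0) \leq t_\ell\}] \leq \IP[\lo(D_0) \leq t_\ell]\cdot p_\ell$ in Case~1, and $\IP[\cE \cap \{\lo(D_0) \in (t_\ell, t_\ell + \ve_\ell]\}] \leq p_\ell^2$ in Case~2 (the $D_0$-eigenfunction condition, dropped in Case~1 in favor of the quantile, is retained in Case~2 and produces the additional factor of $p_\ell$). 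Summing over at most $|\wh{\cC}_\ell|^2$ ordered pairs, using translation invariance, $|\wh{\cC}_\ell| \leq \wh c\,|\wh{\cC}_\ell^*|$ from (\ref{5.4}), and the quantile bound (\ref{5.9})\,ii) --- with a small deterministic perturbation of $t_\ell$ to absorb any possible atom of $\lo(D_0)$ at $t_\ell$ --- converts the Case~1 contribution into the limsup $\wh c\,\Gamma\,\Sigma$ and the Case~2 contribution into $\Sigma^2$, establishing (\ref{5.18}).

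The main technical point is the asymmetric use of the eigenfunction localization: only one factor of each pair exploits the refined quantity $p_\ell$, while in Case~1 the other factor is controlled purely by the quantile estimate $\IP[\lo(D_0) \leq t_\ell] \lesssim \Gamma/|\wh{\cC}_\ell^*|$. Applying $p_\ell$ symmetrically to both factors would force a discretization of $\lo(D_0)$ over $[s_\ell, t_\ell]$ incurring a factor $\asymp \log\ell/\sigma$ that $\Sigma$ cannot absorb, so this asymmetry is essential to reach the bound of the proposition.
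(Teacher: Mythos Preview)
Your proof is correct and follows essentially the same route as the paper: invoke Theorem~\ref{theo4.1} on $T\cap R$, compare the minimum of $\lo(D_0)$ over $\wh{\cC}_\ell$ with $t_\ell$ via (\ref{5.12}) to extract the $e^{-\Gamma}$ term, and then split into two sub-cases handled respectively by the quantile bound (\ref{5.9})\,ii) and independence (yielding $\wh c\,\Gamma\,\Sigma$) and by a product of two $p_\ell$ factors (yielding $\Sigma^2$). The only cosmetic difference is that in the first sub-case the paper brings in a \emph{third} box $D''$ realizing $\min_{\wh{\cC}_\ell}\lo(\cdot)<t_\ell$ and pairs it with whichever of $D_0,D_0'$ lies at $|\cdot|_\infty$-distance at least $\frac14\ell^{3/4}$, whereas you work directly with the ordered pair from Theorem~\ref{theo4.1}; your variant is arguably cleaner. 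Your remark on the atom at $t_\ell$ is correct in spirit, but the simplest fix is to put the boundary case $\lo(D_0)=t_\ell$ into Case~2 (i.e.\ make Case~1 the strict inequality $\lo(D_0)<t_\ell$), which lets you apply (\ref{5.9})\,ii) verbatim and keeps Case~2 as $[t_\ell,t_\ell+\ve_\ell]$---this is exactly how the paper proceeds in (\ref{5.22}).
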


\begin{proof}
By Theorem \ref{theo4.1} and the observation (\ref{5.6}), we see that for large $\ell$ one has the inclusion (with hopefully obvious notation):
\begin{equation}\label{5.20}
\begin{split}
T \cap R \subseteq  &\; \mbox{$\{$there are $D_0,D'_0$ in $\wh{\cC}_\ell$ such that $d_\infty(D_0, D'_0) \ge \ell^{3/4}$}
\\
&\; \mbox{and for $D = D_0$ and $D= D'_0$ one has $\lob \le \lo (D) \le \lob + \ve_\ell$}
\\
&\; \mbox{and $\varphi_{1,D,\o} \le e^{-\lol^{\wh{\eta}}}$ on $D \backslash D^{\rm int}\}$}.
\end{split}
\end{equation}
Comparing $\min_{D''_0 \in \wh{\cC}_\ell} \; \lo (D''_0)$ with $t_\ell \, (= t_\ell (\Gamma)$), we see that for large $\ell$ one has by (\ref{5.12})
\begin{equation}\label{5.21}
\begin{array}{l}
\mbox{$\IP[T \cap R]\le e^{-\Gamma} + \IP [\min_{D''_0 \in \wh{\cC}_\ell} \,\lo (D''_0) \le t_\ell$ and there are $D_0,D'_0$ in $\wh{\cC}_\ell$ with}
\\
\mbox{$d_\infty (D_0, D'_0) \ge \ell^{3/4}$, such that for $D = D_0$ and $D = D'_0, \lob \le  \lo (D) \le$}
\\
\mbox{$\lob + \ve_\ell$ and $\varphi_{1,D,\o} \le e^{-\lol^{\wh{\eta}}} \; \mbox{on} \; D \, \backslash \, D^{\rm int}]$}.
\end{array}
\end{equation}
Thus, splitting between the case when $\min_{D''_0 \in \wh{\cC}_\ell} \, \lo (D''_0)$ is strictly smaller, or is equal to $t_\ell$, we now find that for large $\ell$
\begin{equation}\label{5.22}
\begin{array}{l}
\mbox{$\IP[T \cap R]\le e^{-\Gamma} + \IP[$there are $D''$ and $D$ in $\wh{\cC}_\ell$ with $d_\infty (D'', D) \ge \frac{1}{4} \;\ell^{3/4}$}
\\
\mbox{such that $\lo (D'') < t_\ell$ and $\lo (D'') \le \lo (D) \le \lo (D'') + \ve_\ell$, and}
\\
\mbox{$\varphi_{1,D,\o} \le e^{-\lol^{\wh{\eta}}}$ on $D \backslash D^{\rm int}] + \IP[$there are $D$ and $D'$ in $\wh{\cC}_\ell$, with}
\\
\mbox{$d_\infty (D,D') \ge \ell^{3/4}$ such that $\lo(D) \in [t_\ell, t_\ell + \ve_\ell]$, $\varphi_{1,D,\o} \le e^{-\lol^{\wh{\eta}}}$}
\\
\mbox{on $D \backslash D^{\rm int}$, and $\lo(D') \in [t_\ell, t_\ell + \ve_\ell]$, $\varphi_{1,D',\o} \le e^{-\lol^{\wh{\eta}}}$ on $D' \backslash D'^{\rm int}]$}.
\end{array}
\end{equation}

\n
Using the independence of random variables corresponding to $D_0$-boxes at mutual distance bigger than $2a$, one finds with a union bound, independent variables $\o_1$ and $\o_2$, and hopefully obvious notation
\begin{equation}\label{5.23}
\begin{array}{l}
\IP[T \cap R]\le 
\\[0.5ex]
\mbox{$e^{-\Gamma} + | \wh{\cC}_\ell |^2 \,\IP \otimes \IP [\lambda_{1,\o_1} (D'') < t_\ell, \lambda_{1,\o_2}(D) \in [\lambda_{1,\o_1}(D''), {\lo}_1(D'') + \ve_\ell]$ and}
\\[0.5ex]
\mbox{$\varphi_{1,D,\o_2} \le e^{-\lol^{\wh{\eta}}}$ on $D \backslash D^{\rm int}] + | \wh{\cC}_\ell |^2 \;\IP[\lo (D) \in [t_\ell, t_\ell + \ve_\ell]$ and}
\\[0.5ex]
\mbox{$\varphi_{1,D,\o} \le e^{-\lol^{\wh{\eta}}}$ on $D \backslash D^{\rm int}]^2$}.
\end{array}
\end{equation}

\medskip\n
We then bound the product probability in the right member of (\ref{5.23}) with the help of (\ref{5.9}) ii), (\ref{5.4}), as well as (\ref{5.10}), and find that for large $\ell$
\begin{equation}\label{5.24}
\begin{array}{l}
\IP[T \cap R]\le 
\\[0.5ex]
\mbox{$e^{-\Gamma} + \wh{c} \; \Gamma | \wh{\cC}_\ell | \; \sup\limits_{s_\ell \le t \le t_\ell} \, \IP[\lo(D) \in [t,t + \ve_\ell], \varphi_{1,D,\o} \le e^{-\lol^{\wh{\eta}}}$ on $D \backslash D^{\rm int}] \;+$}
\\
\mbox{$| \wh{\cC}_\ell |^2  \, \IP[\lo(D) \in [t_\ell,t_\ell + \ve_\ell], \varphi_{1,D,\o} \le e^{-\lol^{\wh{\eta}}}$ on $D \backslash D^{\rm int}]^2$}.
\end{array}
\end{equation}

\medskip\n
Since $\lim_\ell \,\IP[T] = 0$ by (\ref{3.25}), the claim (\ref{5.18}) now follows from the above inequality and the definition of $\Sigma$ in (\ref{5.19}). This proves Proposition \ref{prop5.3}.
\end{proof}

\section{Lower bound on the spectral gap via deconcentration}
\setcounter{equation}{0}

In this section we prove the main asymptotic lower bound on the spectral gap in Theorem \ref{theo6.1}. The scale $\lol^{-(1 + 2/d)}$ that appears in Theorem \ref{theo6.1} is expected to capture the correct size of the spectral gap, see Remark \ref{rem6.5} 1) at the end of the section. The main ingredient in the proof of Theorem \ref{theo6.1} lies in the deconcentration estimates shown in Theorem \ref{theo6.2}. Whereas the results of the previous sections can be adapted with the techniques of Chapter 4 of \cite{Szni98a} to the case of soft obstacles, see (\ref{1.18}),  the proof of Theorem \ref{theo6.2} uses in a substantial manner the hard sphere obstacles considered here.

\medskip
The main result is 

\begin{theorem}\label{theo6.1} (Lower bound on the spectral gap)
\begin{equation}\label{6.1}
\begin{array}{l}
\lim\limits_{\sigma \r 0} \; \limsup\limits_{\ell \r \infty} \; \IP [R] = 0, \;\mbox{where}
\\
\mbox{$R = \{\o \in \Omega; \lob < \infty$ and $\llob < \lob + \sigma \lol^{-(1 + 2/d)}\}$,}
\\
\mbox{see (\ref{4.5})}.
\end{array}
\end{equation}
\end{theorem}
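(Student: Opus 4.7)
The plan is to combine Proposition \ref{prop5.3} with a deconcentration estimate (the key being Theorem \ref{theo6.2}) to drive the quantity $\Sigma$ from (\ref{5.19}) to zero as $\sigma\to 0$, and then to send $\Gamma\to\infty$. Concretely, by Proposition \ref{prop5.3} one has, for every $\Gamma>0$,
\begin{equation*}
\limsup_{\ell\to\infty}\IP[R]\le e^{-\Gamma}+\wh c\,\Gamma\,\Sigma(\sigma,\Gamma)+\Sigma(\sigma,\Gamma)^2,
\end{equation*}
where the dependence of $\Sigma$ on $\sigma$ enters through $\ve_\ell=\rho_\ell+\delta_\ell+e^{-\lol^{\wh{\eta}}}$ (with $\rho_\ell=\sigma\lol^{-(1+2/d)}$). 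So if one can show that for each fixed $\Gamma$ one has $\lim_{\sigma\to 0}\Sigma(\sigma,\Gamma)=0$, then sending first $\sigma\to 0$ and then $\Gamma\to\infty$ yields $\lim_{\sigma\to 0}\limsup_{\ell\to\infty}\IP[R]=0$, which is exactly (\ref{6.1}).

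The heart of the matter is therefore controlling the deconcentration quantity $\Sigma$ appearing in (\ref{5.19}). For this I would invoke Theorem \ref{theo6.2}, whose content should be: there exist a constant $c>0$ and, for each admissible $t\in[s_\ell,t_\ell]$, a collection $J_1,\dots,J_m$ of pairwise disjoint sub-intervals of $(0,t_\ell)$ with $m=m(\sigma,\ell)\to\infty$ as $\sigma\to 0$, such that
\begin{equation*}
\IP\bigl[\lod\in[t,t+\ve_\ell],\;\varphi_{1,D_0,\o}\le e^{-\lol^{\wh{\eta}}}\text{ on }D_0\backslash D_0^{\rm int}\bigr]\le c\,\IP[\lod\in J_i]\quad\text{for every }i=1,\dots,m.
\end{equation*}
Averaging over $i$ and using disjointness together with (\ref{5.9})~i), one obtains
\begin{equation*}
\IP\bigl[\lod\in[t,t+\ve_\ell],\;\varphi_{1,D_0,\o}\le e^{-\lol^{\wh{\eta}}}\text{ on }D_0\backslash D_0^{\rm int}\bigr]\le \frac{c}{m}\,\IP[\lod\le t_\ell]\le \frac{c'\,\Gamma}{m\,|\wh\cC_\ell|}.
\end{equation*}
Multiplying by $|\wh\cC_\ell|$ and taking the supremum over $t$ and then the $\limsup$ in $\ell$, this bounds $\Sigma(\sigma,\Gamma)\le c''\Gamma/m(\sigma)\to 0$ as $\sigma\to 0$, as required.

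The main obstacle, and the real content of the argument, is therefore constructing the intervals $J_i$ and establishing the domination by $\IP[\lod\in J_i]$. The mechanism, sketched in the introduction and to be implemented in Theorem \ref{theo6.2}, is a ``gentle blow-up'' of the Poisson cloud by homotheties $x\mapsto e^{u_i/|D_0|}x$ with $0<u_1<\dots<u_m<1$: under such a map, the image of a Poissonian configuration is still Poissonian (with a slightly smaller intensity), and one can compare densities through an explicit Radon-Nikodym calculation that crucially uses the hard spherical nature of the obstacles (the boundaries of balls map to boundaries of balls of a slightly rescaled radius, which can be compensated). The constraint $\varphi_{1,D_0,\o}\le e^{-\lol^{\wh{\eta}}}$ on $D_0\backslash D_0^{\rm int}$ ensures that the underlying clearing, which supports the principal eigenfunction, is well-centered inside $D_0^{\rm int}$, so that for small $u_i$ the blown-up clearing still fits inside $D_0$ and shifts $\lod$ downward by a controlled amount of the right order $\sim u_i\,\lol^{-(1+2/d)}$ (in accordance with the derivative computation sketched in the introduction relating volumes and principal eigenvalues).

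Once the deconcentration estimate of Theorem \ref{theo6.2} is in place, the proof of Theorem \ref{theo6.1} reduces to routine combinatorics: choose $m$ of order $1/\sigma$ (so that $m$ disjoint intervals of length of order $\ve_\ell$ fit into an interval of length of order $\lol^{-2/d}$ just below $t_\ell$, which is permitted since $t_\ell-s_\ell$ is of order $\lol^{-2/d}$ by (\ref{5.15}) and (\ref{5.10})), conclude $\Sigma(\sigma,\Gamma)\le c''\Gamma/m\to 0$ as $\sigma\to 0$, and then send $\Gamma\to\infty$ in Proposition \ref{prop5.3}. This completes the proof of (\ref{6.1}).
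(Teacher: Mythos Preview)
Your proposal is correct and follows the same route as the paper: sum the $m$ inequalities from Theorem \ref{theo6.2} and combine with (\ref{5.9})~ii) (not i) --- you need the \emph{upper} bound $\IP[\lod<t_\ell]\le\Gamma/|\wh{\cC}_\ell^*|$) to get $\Sigma\le c\,\Gamma/m$, then let $m$ grow and send $\Gamma\to\infty$ in (\ref{5.18}). The paper makes the specific choice $m=\lceil K\Gamma^3\rceil$ tied to $\Gamma$, yielding $\Sigma\le\wh c/\Gamma^2$ in one step rather than your double limit; your heuristic ``$m$ of order $1/\sigma$'' is off (the dependence of $\sigma_0$ on $m$ in (\ref{6.49}) is exponential, so one only gets $m$ of order $\log(1/\sigma)$), but since you use Theorem \ref{theo6.2} as a black box this does not affect the argument for (\ref{6.1}).
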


The main tool in proving Theorem \ref{theo6.1} are the following deconcentration estimates, which take place in a large deviation regime of low values for $\lod$. We recall $D_0$ from (\ref{4.2}), $D_0^{\rm int}$ from (\ref{5.5}), $\wh{\eta} \in (0, \frac{1}{d})$ from Theorem \ref{theo4.1}, $\ve_\ell = \sigma\lol^{-(1 + 2/d)} + \lol^{-(2 + 2/d)} + e^{-\lol^{\wh{\eta}}}$ from (\ref{5.17}), as well as $s_\ell$ and $t_\ell\; (= t_\ell(\Gamma))$ from (\ref{5.10}) and (\ref{5.8}).

\begin{theorem}\label{theo6.2} (Deconcentration estimates)

\n
There is a $K > 0$ such that for every $\Gamma > 0$ and $m \ge 1$, there is a $\sigma_0 > 0$ such that for all $\sigma \in (0,\sigma_0)$, for large $\ell$, for all $t \in [s_\ell, t_\ell]$, setting $J = [t,t + \ve_\ell]$, there are pairwise disjoint compact intervals $J_1,\dots,J_m$ in $(0,t_\ell)$ such that
\begin{equation}\label{6.2}
\begin{array}{l}
\mbox{$\IP [\lod \in J$ and $\varphi_{1,D_0,\o} \le e^{- \lol^{\wh{\eta}}}$ on $D_0 \backslash D_0^{\rm int}] \le K \, \IP [\lod \in J_i]$,}
\\
\mbox{for any $1 \le i \le m$}.
\end{array}
\end{equation}
(We actually prove Theorem \ref{theo6.2} with $K = e^{2 d \nu (1 + 2a)^d}$, see (\ref{6.19})).
\end{theorem}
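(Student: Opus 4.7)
My approach follows the strategy sketched in the Introduction: perform a ``gentle blow-up'' of the Poisson cloud via homotheties $T_{r_i}\omega = \sum_j \delta_{r_i x_j}$, with $r_i = \exp(u_i/|D_0|)$ and $u_i = i\tau$ in $(0,1)$ for a small spacing $\tau > 0$ depending on $\sigma$ and $m$. Writing $A$ for the event on the left of (\ref{6.2}) and $B_i = \{\lambda_{1,\omega}(D_0) \in J_i\}$, the plan is to establish, for appropriately chosen pairwise disjoint intervals $J_i \subset (0, t_\ell)$: (i) the eigenvalue inclusion $T_{r_i}(A) \subseteq B_i$, and (ii) a Poisson change-of-measure bound $\IP_\nu[A] \le K\, \IP_\nu[T_{r_i}(A)]$.

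Step (ii) is essentially computational. Both $A$ and $B_i$ are measurable with respect to $\omega \cap V$ where $V = D_0^a$ is the closed $a$-neighborhood of $D_0$. Since $T_r$ pushes $\IP_\nu$ forward to $\IP_{\nu r^{-d}}$, the identity $\IP_\nu[T_{r_i}(A)] = \IP_{\nu r_i^d}[A]$ follows (from $T_{1/r_i}\omega \sim \IP_{\nu r_i^d}$ when $\omega \sim \IP_\nu$), and the Radon-Nikodym derivative
\begin{equation*}
\frac{d\IP_\nu}{d\IP_{\nu r^d}}\bigg|_V (\omega) = \exp\bigl(\nu(r^d-1)|V|\bigr) \cdot r^{-dN}, \qquad N = \omega(V),
\end{equation*}
is bounded above by $\exp(\nu(r^d-1)|V|)$. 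Using $r_i^d-1 \le 2du_i/|D_0|$ for $u_i \in (0,1)$ and large $\ell$, together with $|V|/|D_0| \le (1+2a)^d$, this yields the uniform bound $K = \exp(2d\nu(1+2a)^d)$, and combining with (i) gives $\IP_\nu[A] \le K\,\IP_\nu[B_i]$.

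Step (i) rests on the exact scaling identity
\begin{equation*}
\lambda_{1, T_r\omega}(D_0) = r^{-2}\,\lambda_{1,\omega,\,a/r}(D_0/r),
\end{equation*}
where the subscript $a/r$ indicates obstacle radius $a/r$. For the upper bound I use, as test function on $D_0 \setminus \mathrm{Obs}_{T_r\omega}$, the rescaled eigenfunction $\wt{\varphi}(y) := r^{-d/2}\, \varphi_{1,D_0,\omega}(y/r)$ smoothly cut off to $D_0$: it vanishes on $\bigcup_x B(rx, ra) \supset \mathrm{Obs}_{T_r\omega}$, and the localization hypothesis on $\varphi_{1,D_0,\omega}$ ensures $\wt\varphi$ is concentrated in $rD_0^{\mathrm{int}} \subset D_0$ (for $r$ close to $1$), so the cutoff affects the Rayleigh quotient only by $O(e^{-(\log\ell)^{\wh{\eta}}})$, yielding $\lambda_{1,T_r\omega}(D_0) \le r^{-2}\lambda_{1,\omega}(D_0) + O(e^{-(\log\ell)^{\wh{\eta}}})$. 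The matching lower bound $\lambda_{1,T_r\omega}(D_0) \ge r^{-2}\lambda_{1,\omega}(D_0) - O(e^{-(\log\ell)^{\wh{\eta}}})$ comes from running the same argument in reverse, starting from the principal eigenfunction of the transformed configuration; the localization propagates because the clearing supporting the new eigenfunction is essentially the $r$-dilate of the original clearing, which remains safely inside $D_0$. Defining $J_i$ as an interval of width $\approx \varepsilon_\ell$ centered at $r_i^{-2} t$ then secures $T_{r_i}(A) \subseteq B_i$.

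The spacing $\tau$ is constrained by two competing demands. For pairwise disjointness, the gap $(r_{i-1}^{-2} - r_i^{-2})t \approx 2\tau t/|D_0|$ must exceed $\varepsilon_\ell$; since $t \sim c_0(\log\ell)^{-2/d}$, $|D_0| = O(\log\ell)$, and $\varepsilon_\ell \sim \sigma(\log\ell)^{-(1+2/d)}$, this reduces to $\tau \gtrsim \sigma$. The constraint $u_m = m\tau < 1$ then forces $\sigma < \sigma_0(m)$, after which $J_i \subset (0, t_\ell)$ follows automatically (each $J_i$ sits strictly to the left of $J$, well below $t_\ell$). The principal technical obstacle I anticipate is the lower bound in step (i): propagating the localization $\varphi_{1,D_0,\omega} \le e^{-(\log\ell)^{\wh{\eta}}}$ on $D_0 \setminus D_0^{\mathrm{int}}$ from $\omega$ to $T_r\omega$ with quantitative control. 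This propagation relies crucially on the hard-sphere geometry --- under the homothety each ball $B(x,a)$ is \emph{enlarged} to $B(rx, ra)$, strictly containing the new obstacle $B(rx, a)$, so rescaled eigenfunctions automatically vanish on a superset of the new obstacle set --- an automatic rigidity that breaks down for soft obstacles with smooth potentials, consistent with the remark in the Introduction.
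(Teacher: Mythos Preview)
Your overall architecture matches the paper's: homothetic blow-up, Poisson change of measure with the constant $K=e^{2d\nu(1+2a)^d}$, and a two-sided eigenvalue comparison. Step (ii) is fine and is essentially the paper's (6.14)--(6.19). The gap is in Step (i), specifically the lower bound and, as a consequence, in your choice of spacing $u_i=i\tau$.

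For the upper bound $\lambda_{1,T_r\omega}(D_0)\le r^{-2}\lambda_{1,\omega}(D_0)+O(\eta_\ell)$ your argument is correct and matches Lemma~6.3: the rescaled eigenfunction vanishes on $B(rx,ra)\supset B(rx,a)$, and the localization hypothesis on $\varphi_{1,D_0,\omega}$ controls the cutoff to $D_0$. But the reverse direction does \emph{not} run symmetrically. If you start from $\psi=\varphi_{1,D_0,T_r\omega}$ and rescale by $1/r$, the resulting function vanishes only on the balls $B(x,a/r)$, which are \emph{strictly smaller} than the original obstacles $B(x,a)$. To use it as a test function for $D_{0,\omega}$ you must show it is small on each annulus $B(x,a)\setminus B(x,a/r)$. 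This has nothing to do with localization in $D_0^{\rm int}$ and is not implied by any ``propagation of the clearing''; it is a boundary-regularity estimate near every obstacle. The paper proves the analogous bound (Lemma~6.4 and (6.32)--(6.37)) via the probabilistic representation (1.12) and a harmonic-measure computation, obtaining that the eigenfunction is at most $c_7\,u/|D_0|^{3/2}$ on these annuli. The resulting error in the eigenvalue comparison is of order $u/|D_0|$ on the logarithmic scale (see (6.41)), \emph{not} $O(\eta_\ell)$.

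This larger error is fatal to your linear spacing. In log-coordinates the interval $J_i$ has lower endpoint $\zeta_{\min}-(c_*+2)u_i/|D_0|$ and upper endpoint $\zeta_{\max}-2u_i/|D_0|$ (up to negligible $\eta_\ell$-terms), so its width grows linearly in $u_i$. Disjointness of $J_i$ and $J_{i+1}$ forces $2u_{i+1}>\bar c\,\sigma+(c_*+2)u_i$, a genuinely geometric recursion (this is exactly (6.45)--(6.49) in the paper). With $u_i=i\tau$ the condition becomes $2\tau>\bar c\,\sigma+c_*\,i\tau$, which fails for $i$ larger than $2/c_*$ regardless of $\tau$ and $\sigma$. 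You must instead take $u_i$ growing like $(c_*+2)^i$, and only then choose $\sigma_0$ small enough that $u_m<1$.
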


We first explain how Theorem \ref{theo6.1} follows from Theorem \ref{theo6.2}.

\bigskip\n
{\it Proof of Theorem \ref{theo6.1} based on Theorem \ref{theo6.2}:}  We consider $\Gamma > 0$, $m = \lceil K \, \Gamma^3\rceil$, and a corresponding $\sigma_0 > 0$ as in Theorem \ref{theo6.2}. Then, for all $\sigma \in (0, \sigma_0)$, for large $\ell$, for all $t \in [s_\ell, t_\ell]$, there are pairwise disjoint compact intervals $J_1,\dots, J_m$ in $(0,t_\ell)$ such that (\ref{6.2}) holds. Adding these $m$ inequalities, we see that the left member of (\ref{6.2}) is at most 
\begin{equation*}
\mbox{\f $\dis\frac{K}{m}$}\; \IP[\lod < t_\ell] \stackrel{(\ref{5.9}) {\rm ii)}}{\le}\mbox{\f $\dis\frac{K}{m}$} \; \Gamma / | \wh{\cC}_\ell^*| \stackrel{(\ref{5.4}), \,\rm choice \; of\; m}{\le} \wh{c} / (\Gamma^2 \, |\wh{\cC}_\ell |).
\end{equation*}
Thus, coming back to (\ref{5.19}), we see that for all $\sigma < \sigma_0$,
\begin{equation}\label{6.3}
\Sigma \le \wh{c} \, / \, \Gamma^2.
\end{equation}
It then follows from (\ref{5.18}) that for all $\sigma < \sigma_0$ (which depends on $\Gamma$)
\begin{equation}\label{6.4}
\limsup\limits_{\ell \r \infty} \; \IP [R] \le e^{-\Gamma} + \wh{c}\,^2 \, / \, \Gamma + \wh{c}\,^2 \, / \, \Gamma^4.
\end{equation}
Letting $\Gamma$ tend to infinity, (\ref{6.1}) follows. This proves Theorem \ref{theo6.1}. \hfill $\square$

\medskip
We will now turn to the proof of Theorem \ref{theo6.2}. To establish the deconcentration estimate in a large deviation regime of low values of $\lod$ corresponding to (\ref{6.2}), we will bring into play transformations of the obstacle configurations which do not change too much probabilities (see the multiplicative factor $K$ in (\ref{6.2})), and induce a controlled decrease of $\lod$. In this features lies a difficulty. Whereas increasing $\lod$ is not difficult (for instance by inserting an additional obstacle in a spot where the principal Dirichlet eigenfunction $\varphi_{1,D_0,\o}$ is not too small, see Theorem 2.3, p.~109 of \cite{Szni98a}), decreasing $\lod$ in  a controlled (to have many disjoint intervals $J_i, 1 \le i \le m$, in (\ref{6.2})) and probabilistically economical fashion is a more delicate endeavor. To this end, we will exploit the effect of a ``gentle expansion'' of the Poisson cloud $\o$ to lower eigenvalues. The constraint on $\varphi_{1,D_0,\o}$ in the left member of (\ref{6.2}) will ensure a ``proper centering of the underlying clearing'' in the box (so that it does not get damaged by the expansion). We also refer to Lemma 3.3 of \cite{DingFukuSunXu21} for other kind of transformations, which however do not seem adequate for the task of proving (\ref{6.2}).

\begin{samepage}
\bigskip\n
{\it Proof of Theorem \ref{theo6.2}:} Throughout the proof, using translation invariance, without loss of generality, we let $D_0$ stand for the $L_0$-box centered at the origin, i.e.~corresponding to $q=0$ in (\ref{4.2}). We then consider $\Gamma > 0$, $m \ge 1$ and $\sigma > 0$ to be later chosen small, see (\ref{6.49}). We further introduce the ``expansion ratio''
\begin{equation}\label{6.5}
\mbox{$\lambda = e^{u/ |D_0|}$ with $u \in (0,1)$ \quad $\big($note that $\lambda^d \le 2$ since $\mbox{\f $\dis\frac{d}{|D_0|}$} \stackrel{(\ref{4.1})}{\le} \mbox{\f $\dis\frac{d}{10^d}$} < \log 2\big)$},
\end{equation}
and the homothety centered at the origin of ratio $\lambda$:
\begin{equation}\label{6.6}
h(x) = \lambda x, \; \mbox{for $x \in \IR^d$}.
\end{equation}
Given an $\o = \sum_i \, \delta_{x_i}$ in $\Omega$, we write
\begin{equation}\label{6.7}
\mbox{$\wt{\o} = h \, \circ \, \o = \sum\limits_i \, \delta_{h(x_i)} \in \Omega$, for the point measure image of $\o$ under $h$},
\end{equation}
so that
\begin{equation}\label{6.8}
\begin{array}{l}
\mbox{$\wt{\o}$ under $\IP$ is distributed as $\wt{\IP}$, where $\wt{\IP}$ stands for the law on $\Omega$ of a Poisson}
\\
\mbox{point process on $\IR^d$ with intensity $\frac{\nu}{\lambda^d}$}.
\end{array}
\end{equation}
\end{samepage}

\n
We need some further notation. When $U$ is a bounded open set in $\IR^d$ and $\o \in \Omega$, we write (see (\ref{1.15}))
\begin{equation}\label{6.9}
\wt{\lambda}_{1,\o} (U) = \lambda_{- \frac{1}{2} \, \Delta} \,\big(U \, \backslash \, \bigcup_{x \in \o} B(x,\lambda \, a)\big)
\end{equation}
(i.e.~the principal Dirichlet eigenvalue of $- \frac{1}{2} \, \Delta$ in $U \, \backslash \, \bigcup_{x \in \o} B(x,\lambda \, a)$), and
\begin{equation}\label{6.10}
\begin{array}{l}
\mbox{$\wt{\varphi}_{1,U,\o}$ for the corresponding principal Dirichlet eigenfunction}
\\[0.5ex]
\mbox{(defined similarly to (\ref{1.14}) and below (\ref{1.14}) with $a$ replaced by $\lambda \,a$)}.
\end{array}
\end{equation}
Then, by Brownian scaling, for $U$ bounded open set and $\o \in \Omega$, one has
\begin{align}
 \wt{\lambda}_{1,\wt{\o}} (\lambda U) & = \; \mbox{\f $\dis\frac{1}{\lambda^2}$} \;\lo (U), \; \mbox{and} \label{6.11}
 \\[1ex]
 \wt{\varphi}_{1,\lambda U,\wt{\o}} (\cdot) & = \mbox{\f $\dis\frac{1}{\lambda^{d/2}}$} \; \varphi_{1,U,\o} \, \big(\mbox{\f $\dis\frac{\cdot}{\lambda}$}\big) \; \;\mbox{(both sides identically vanish if $U_\o = \emptyset$)}. \label{6.12}
\end{align}

\n
Of central interest for us is the choice $U = D_0$. By the observation below (\ref{1.10}), $\wt{\varphi}_{1,\lambda D_0,\o}(\cdot)$ is a random continuous function on $\IR^d$, which vanishes outside $\lambda D_0 \backslash \bigcup_{x \in \o} B(x,\lambda\,a)$. Letting $D^a_0$ stand for the open $a$-neighborhood of $D_0$, we see that 
\begin{equation}\label{6.13}
\begin{array}{l}
\mbox{$\wt{\lambda}_{1,\o} (\lambda D_0)$ and $\wt{\varphi}_{1, \lambda D_0, \o}(\cdot)$ are measurable with respect to the $\sigma$-algebra $\cG_{\lambda D_0^a}$} 
\\[0.5ex]
\mbox{on $\Omega$ generated by the random variables $\o(C)$ with $C$ Borel subset of $\lambda D^a_0$}.
\end{array}
\end{equation}

\n
The law $\wt{\IP}_{\lambda D^a_0}$ of the restriction of $\wt{\IP}$ to the $\sigma$-algebra $\cG_{\lambda D^a_0}$ is absolutely continuous with respect to the corresponding restriction $\IP_{\lambda D^a_0}$ of $\IP$ to $\cG_{\lambda D^a_0}$. Indeed, they respectively correspond to a Poisson point process of intensity $\frac{\nu}{\lambda^d}$ and $\nu$ in the box $\lambda D^a_0$ and one has
\begin{equation}\label{6.14}
\begin{split}
\wt{\IP}_{\lambda D^a_0} & = \exp\{(\log \lambda^{-d}) \, \o(\lambda D_0^a) + \nu\, |\lambda D_0^a| - \nu \, \lambda^{-d}\, |\lambda D_0^a|\}\, \IP_{\lambda D^a_0}
\\
& = \exp\{ \nu \,(\lambda^d - 1) \, |D^a_0|\} \; e^{-d \frac{u}{|D_0|} \,\o (\lambda D^a_0)} \; \IP_{\lambda D^a_0} .
\end{split}
\end{equation}
We write as a shorthand notation
\begin{equation}\label{6.15}
\eta_\ell = e^{-\lol^{\wh{\eta}}},
\end{equation}
so that $\ve_\ell = \sigma \lol^{-(1+2/d)} + \lol^{-(2 + 2/d)} + \eta_\ell$, see (\ref{5.17}).

\medskip
We assume from now on $\ell \ge c_6(d,\nu, a, \Gamma)$, see above (\ref{5.8}), so that $t_\ell (= t_\ell (\Gamma))$ is defined. Then for $t \in [s_\ell, t_\ell]$ (see (\ref{5.10}) for notation) and $J = [t, t + \ve_\ell]$, we consider the event that appears in the left member of (\ref{6.2}), namely
\begin{equation}\label{6.16}
\mbox{$A = \{\lod \in J$ and $\varphi_{1,D_0,\o} \le \eta_\ell$ on $D_0 \, \backslash \, D_0^{\rm int}\}$}.
\end{equation}

\n
Using the scaling identities (\ref{6.11}), (\ref{6.12}), and the density formula (\ref{6.14}), we find that
\begin{equation}\label{6.17}
\begin{split}
\IP[A]  \stackrel{(\ref{6.11}), (\ref{6.12})}{=} &\;\IP [\wt{\lambda}_{1,\wt{\o}}  (\lambda D_0) \in \mbox{\f $\dis\frac{J}{\lambda^2}$} \; \mbox{and} \; \wt{\varphi}_{1, \lambda D_0, \wt{\o}} \le \eta_\ell / \lambda^{d/2} \; \mbox{on} \; \lambda (D_0 \backslash D_0^{\rm int})]
\\[0.5ex]
  \stackrel{(\ref{6.8})}{=} \quad\;  &\;\wt{\IP} [\wt{\lambda}_{1,\o} (\lambda D_0) \in  \mbox{\f $\dis\frac{J}{\lambda^2}$}  \; \mbox{and} \; \wt{\varphi}_{1, \lambda D_0, \o} \le \eta_\ell/ \lambda^{d/2} \;\mbox{on} \;  \lambda (D_0 \backslash D_0^{\rm int})]
\\[0.5ex]
 \stackrel{(\ref{6.13}), (\ref{6.14})}{=} &\exp\{ \nu (\lambda^d -1 ) \, |D^a_0| \} \; \IE\, \big[ \exp\big\{  \mbox{\f $-\dis\frac{d u}{|D_0|}$}  \;\o(\lambda D^a_0)\big\}, \; \wt{\lambda}_{1,\o} (\lambda D_0) \in  \mbox{\f $\dis\frac{J}{\lambda^2}$} \; \mbox{and}
\\[0.5ex]
& \wt{\varphi}_{1,\lambda D_0, \o} \le \eta_\ell / \lambda^{d/2} \; \mbox{on} \; \lambda \, (D_0 \backslash D_0^{\rm int})]
\\
\le \quad\; \;\; &K \, \IP[\wt{A}],
\end{split}
\end{equation}
where we have set
\begin{align}
\wt{A} & = \big\{\wt{\lambda}_{1,\o} (\lambda D_0) \in   \mbox{\f $\dis\frac{J}{\lambda^2}$} \; \mbox{and} \; \wt{\varphi}_{1, \lambda D_0, \o} \le \eta_\ell/ \lambda^{d/2} \;\mbox{on} \;  \lambda (D_0 \backslash D_0^{\rm int})\big\} \; \mbox{and} \label{6.18}
\\[1ex]
K & = e^{2d \nu(1 + 2a)^d} \; \mbox{(}\ge \exp\{ \nu\,(\lambda^d - 1)\, |D_0^a|\} \;\mbox{since} \; \lambda^d - 1 = e^{du / |D_0|} - 1 \le 2 du / |D_0| \label{6.19}
\\
&\qquad \qquad \quad \;\;\, \mbox{by (\ref{6.5}) and the inequality $e^s - 1 \le e^s\,s$ for $s > 0$)}. \nonumber
\end{align}

\n
We also note that just as in (\ref{1.11}) (replacing $a$ by $\lambda a$ in the proof of (\ref{1.11})), with the same dimension dependent constant $c_1$, one has
\begin{equation}\label{6.20}
\| \wt{\varphi}_{1,\lambda D_0,\o}\|_\infty \le c_1 \, \wt{\lambda}_{1,\o} (\lambda D_0)^{d/4}, \; \mbox{for all $\o \in \Omega$}
\end{equation}
(one could also use the scaling identities (\ref{6.11}), (\ref{6.12}) and (\ref{1.11}) to infer (\ref{6.20})).

\medskip
We now wish to make use of the inequality (\ref{6.17}). For this purpose we want to compare $\wt{\lambda}_{1,\o} (\lambda D_0)$ that appears in the event $\wt{A}$ in (\ref{6.18}) with $\lod$. The next lemma provides an upper bound on $\lod$ in terms of $\wt{\lambda}_{1,\o} (\lambda D_0)$, when we know that $\wt{\varphi}_{1,\lambda D_0,\o}$ is small on $D^c_0$.

\begin{lemma}\label{lem6.3}
For $\o \in \Omega$, if $t_1 < 1 \, /\, (4 \,|D_0|^{1/2})$ is such that
\begin{equation}\label{6.21}
\wt{\varphi}_{1,\lambda D_0,\o} \le t_1 \; \mbox{on} \; D_0^c.
\end{equation}
then
\begin{equation}\label{6.22}
\lod \le \wt{\lambda}_{1,\o} (\lambda D_0) (1 - 4 t_1 \, |D_0|^{1/2})^{-1}.
\end{equation}
\end{lemma}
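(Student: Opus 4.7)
The plan is to derive (\ref{6.22}) via the Rayleigh--Ritz variational principle, using a truncated version of $\wt{\varphi} := \wt{\varphi}_{1,\lambda D_0,\o}$ as a test function for $\lambda_{1,\o}(D_0)$. The natural choice is
\[
\psi := (\wt{\varphi} - t_1)_+ .
\]
I first verify that $\psi$ is an admissible test function, i.e.\ that $\psi \in H^1_0(D_0 \setminus Obs_\o)$. By hypothesis (\ref{6.21}), $\wt{\varphi} \le t_1$ on $D_0^c$ (and $\wt{\varphi} \ge 0$ everywhere as a principal eigenfunction), so $\psi$ vanishes outside $D_0$. Since $\lambda > 1$, one has $Obs_\o \subseteq \bigcup_{x \in \o} B(x, \lambda a)$, and $\wt{\varphi}$ vanishes on this enlarged obstacle set by definition, see (\ref{6.9})--(\ref{6.10}), so $\psi$ vanishes on $Obs_\o$ as well. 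The truncation formula for Sobolev functions (Corollary~6.18 of \cite{LiebLoss01}, already invoked in the proof of Lemma~\ref{lem4.3}) then gives $\psi \in H^1_0(D_0 \setminus Obs_\o)$ with $\nabla \psi = \nabla \wt{\varphi} \cdot 1_{\{\wt{\varphi} > t_1\}}$.

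The Dirichlet energy bound is then immediate:
\[
\tfrac{1}{2}\|\nabla \psi\|_2^2 \;\le\; \tfrac{1}{2}\|\nabla \wt{\varphi}\|_2^2 \;=\; \wt{\lambda}_{1,\o}(\lambda D_0),
\]
using $\|\wt{\varphi}\|_2 = 1$. For the $L^2$-mass of $\psi$, I would decompose pointwise $\wt{\varphi} = \psi + \min(\wt{\varphi}, t_1)$. The second summand is nonnegative, bounded above by $t_1$, and supported in $\lambda D_0$, hence has $L^2$-norm at most $t_1 |\lambda D_0|^{1/2} = t_1 \lambda^{d/2} |D_0|^{1/2} \le \sqrt{2}\, t_1 |D_0|^{1/2}$, where the last step uses $\lambda^d \le 2$ from (\ref{6.5}). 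The triangle inequality in $L^2$ then yields $\|\psi\|_2 \ge 1 - \sqrt{2}\, t_1 |D_0|^{1/2}$; squaring and using $(1-x)^2 \ge 1 - 2x$ gives $\|\psi\|_2^2 \ge 1 - 2\sqrt{2}\, t_1 |D_0|^{1/2} \ge 1 - 4\, t_1 |D_0|^{1/2}$, which is strictly positive under the hypothesis $t_1 < 1/(4 |D_0|^{1/2})$. Inserting these bounds into the Rayleigh quotient $\lambda_{1,\o}(D_0) \le \tfrac{1}{2}\|\nabla \psi\|_2^2 / \|\psi\|_2^2$ produces (\ref{6.22}).

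The only point requiring real care is the admissibility of $\psi$ as an $H^1_0$-function on the correct domain: the shift by $t_1$ together with (\ref{6.21}) kills $\psi$ on $D_0^c$, and the expansion $\lambda > 1$ guarantees that the enlarged obstacles of radius $\lambda a$ contain the original ones, so $\psi$ also vanishes on $Obs_\o$. Once this is in place, the rest is straightforward bookkeeping. It is crucial to use the shifted function $(\wt{\varphi} - t_1)_+$ rather than, say, $\wt{\varphi}\cdot 1_{D_0}$: the latter generally fails to lie in $H^1_0(D_0 \setminus Obs_\o)$ because only $\wt{\varphi} \le t_1$ on $\partial D_0$ is known, not $\wt{\varphi} = 0$.
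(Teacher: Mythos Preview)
Your proof is correct and follows essentially the same approach as the paper: truncate $\wt{\varphi}$ at a threshold, use the truncation as a test function in the Rayleigh quotient for $\lambda_{1,\o}(D_0)$, bound the Dirichlet energy from above and the $L^2$-mass from below. The only noteworthy difference is that the paper truncates at a level $t>t_1$ (so that, by continuity of $\wt{\varphi}$ and the hypothesis $\wt{\varphi}\le t_1$ on $D_0^c$, the truncation vanishes on a \emph{neighborhood} of $D_0^c$ and is thus compactly supported in $D_{0,\o}$), and only at the end lets $t\downarrow t_1$. Your direct choice $t=t_1$ gives $\psi=0$ merely on $D_0^c$ itself; Corollary~6.18 of \cite{LiebLoss01} yields $\psi\in H^1(\IR^d)$ and the gradient formula, but not by itself $\psi\in H^1_0(D_{0,\o})$. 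This is a minor technical point (and can be fixed exactly by the paper's $t>t_1$ trick, or by noting that $(\psi-\alpha)_+$ is compactly supported in $D_{0,\o}$ for each $\alpha>0$ and converges to $\psi$ in $H^1$), but your sentence attributing $H^1_0$-membership to Corollary~6.18 alone is slightly too quick. The $L^2$-mass bound via the triangle inequality is a clean variant of the paper's expansion-plus-Cauchy--Schwarz and arrives at the same factor $1-4t_1|D_0|^{1/2}$.
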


\begin{proof}
Without loss of generality, we assume that $\wt{\lambda}_{1,\o}(\lambda D_0) < \infty$ and write $\wt{\varphi}$ as a shorthand for $\wt{\varphi}_{1,\lambda D_0,\o}$. As noted above (\ref{6.13}), $\wt{\varphi}$ is a continuous function. If $t > t_1$ is close to $t_1$ and $4t \, |D_0|^{1/2} < 1$, the function $\wt{\psi} = (\wt{\varphi} - t)_+$ is continuous, compactly supported and vanishes on a neighborhood of $D^c_0$. In addition, one has
\begin{equation}\label{6.23}
1 = \dis\int_{\lambda D_0} \wt{\varphi}\,^2  dx \le \dis\int_{\lambda D_0} \wt{\psi}\,^2  + 2 t \wt{\psi} + t^2 dx \underset{\lambda^d \le 2}{\stackrel{\wt{\psi} = 0 \;\rm on\;D^c_0}{\le}}  \dis\int_{D_0} \wt{\psi}\,^2 dx + 2t \dis\int_{D_0} \wt{\psi} \, dx + 2t^2 \,|D_0|.
\end{equation}

\n
Moreover, by the Cauchy-Schwarz inequality and $\int_{D_0} \wt{\psi}^2 dx \le \int_{D_0} \wt{\varphi}^2 dx \le 1$, one also has $\int_{D_0} \wt{\psi} \,dx \le |D_0|^{1/2}$. So, coming back to (\ref{6.23}), we find that
\begin{equation}\label{6.24}
1 \le \dis\int_{D_0} \wt{\psi}\,^2  dx + 2 t  \,|D_0|^{1/2} + 2t^2 \,|D_0| \stackrel{t \,|D_0|^{1/2} < 1}{\le} \dis\int_{D_0} \wt{\psi}\,^2  dx + 4t \, |D_0|^{1/2}.
\end{equation}

\n
Since $4t \, |D_0|^{1/2} < 1$, it follows that $\wt{\psi}$ is not identically $0$. It vanishes on a neighborhood of $D^c_0$ and on $\bigcup_{y \in \o} B(y, \lambda a)$ and is thus compactly supported in $D_{0,\o}$, which is not empty. By construction it also belongs to $H^1(\IR^d)$ and hence to $H^1_0(D_{0,\o})$. Its Dirichlet integral is at most that of $\wt{\varphi}$ (see for instance Corollary 6.18 on p.~153 of \cite{LiebLoss01}) so that
\begin{equation}\label{6.25}
\lod \le \mbox{\f $\dis\frac{\frac{1}{2} \int |\nabla \wt{\psi}|^2 dx}{\int \wt{\psi}^2 dx}$} \le   \mbox{\f $\dis\frac{\frac{1}{2} \int |\nabla \wt{\varphi}|^2 dx}{\int \wt{\psi}^2 dx}$}  \stackrel{(\ref{6.24})}{\le} \wt{\lambda}_{1,\o} (\lambda D_0) \big(1 - 4 t \,|D_0|^{1/2}\big)^{-1}.
\end{equation}
Letting $t$ decrease to $t_1$, we find (\ref{6.22}).
\end{proof}

In the same fashion, one has
\begin{lemma}\label{lem6.4}
For $\o \in \Omega$, if $t_2 < 1 / (4\, |D_0|^{1/2})$ is such that
\begin{equation}\label{6.26}
\mbox{$\varphi_{1,D_0,\o} \le t_2$ on each $B(y,\lambda a), y \in \o$ that intersect $D_0$},
\end{equation}
then
\begin{equation}\label{6.27}
\wt{\lambda}_{1,\o} (\lambda D_0) \le \lod \big(1 - 4 t_2 \,|D_0|^{1/2}\big)^{-1}.
\end{equation}
\end{lemma}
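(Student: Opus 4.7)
The argument runs in close parallel to the proof of Lemma~\ref{lem6.3}, with the roles of the two eigenvalues and the two Sobolev spaces interchanged. Without loss of generality $\lambda_{1,\omega}(D_0)<\infty$, and I write $\varphi$ as a shorthand for $\varphi_{1,D_0,\omega}$: this is a continuous, non-negative, $L^2$-normalized function on $\IR^d$ that vanishes outside $D_0$ and satisfies $\frac{1}{2}\int|\nabla\varphi|^2\,dx=\lambda_{1,\omega}(D_0)$, see (\ref{1.14}) and the paragraph above (\ref{1.9}). Choose any $t$ with $t_2<t<1/(4|D_0|^{1/2})$ and set $\psi=(\varphi-t)_+$.

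The key observation is that $\psi$ is a legitimate test function for $\wt{\lambda}_{1,\omega}(\lambda D_0)$. Indeed, by continuity of $\varphi$ and the hypothesis (\ref{6.26}), for each $y\in\o$ with $B(y,\lambda a)\cap D_0\neq\phi$ one has $\varphi\le t_2<t$ on an open neighborhood of $B(y,\lambda a)$; combined with the fact that $\varphi$ vanishes on an open neighborhood of $D_0^c$, this shows that $\psi$ vanishes on an open neighborhood of $\bigl(\lambda D_0\setminus\bigcup_{y\in\o}B(y,\lambda a)\bigr)^c$. By the standard truncation procedure (see Corollary 6.18 of \cite{LiebLoss01}) it follows that $\psi\in H^1_0\bigl(\lambda D_0\setminus\bigcup_{y\in\o}B(y,\lambda a)\bigr)$ with $|\nabla\psi|\le|\nabla\varphi|$ pointwise a.e.

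To bound $\int\psi^2\,dx$ from below I use the pointwise inequality $\varphi^2\le\psi^2+2t\psi+t^2$, which holds with equality on $\{\varphi\ge t\}$ and strictly on $\{\varphi<t\}$ (where $\psi=0$ and $t^2>\varphi^2$). Integrating over $D_0$, using $\int\varphi^2\,dx=1$ and $\int_{D_0}\psi\,dx\le|D_0|^{1/2}$ by Cauchy-Schwarz, and noting that $t^2|D_0|<t|D_0|^{1/2}/4$ since $t|D_0|^{1/2}<1/4$, one obtains
\begin{equation*}
1\le\int\psi^2\,dx+2t|D_0|^{1/2}+t^2|D_0|\le\int\psi^2\,dx+4t|D_0|^{1/2},
\end{equation*}
so that $\int\psi^2\,dx\ge 1-4t|D_0|^{1/2}>0$, and $\psi$ is in particular not identically zero. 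Combining this with the Rayleigh quotient bound gives
\begin{equation*}
\wt{\lambda}_{1,\omega}(\lambda D_0)\le\frac{\frac{1}{2}\int|\nabla\psi|^2\,dx}{\int\psi^2\,dx}\le\frac{\lambda_{1,\omega}(D_0)}{1-4t|D_0|^{1/2}},
\end{equation*}
and letting $t\downarrow t_2$ yields (\ref{6.27}). The only slightly delicate point is the $H^1_0$-inclusion in the second paragraph, which crucially uses the continuity of $\varphi$ so that the assumed bound $\varphi\le t_2$ on each enlarged obstacle propagates to an open neighborhood on which $\psi\equiv 0$; everything else is bookkeeping parallel to Lemma~\ref{lem6.3}.
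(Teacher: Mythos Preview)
Your proof is correct and follows precisely the approach the paper intends (the paper gives no explicit proof for Lemma~\ref{lem6.4}, stating only ``In the same fashion''): truncate $\varphi_{1,D_0,\o}$ at level $t>t_2$, use it as a test function for $\wt{\lambda}_{1,\o}(\lambda D_0)$, and bound the Rayleigh quotient exactly as in Lemma~\ref{lem6.3} with the roles swapped.

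One small imprecision: you write that ``$\varphi$ vanishes on an open neighborhood of $D_0^c$''. This is not true in general --- $\varphi$ vanishes \emph{on} $D_0^c$ but may be positive arbitrarily close to $\partial D_0$. What you actually need, and what does hold, is that $\psi=(\varphi-t)_+$ vanishes on a neighborhood of $(\lambda D_0)^c$: since $\varphi$ is continuous and $\varphi=0$ on $D_0^c$, the support of $\psi$ lies in $\{\varphi\ge t\}$, a compact subset of the open box $D_0\subset\lambda D_0$. This is what gives $\psi\in H^1_0\bigl(\lambda D_0\setminus\bigcup_{y\in\o}B(y,\lambda a)\bigr)$, together with the hypothesis~(\ref{6.26}) handling the enlarged obstacles. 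With that correction the argument is complete.
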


We now want to choose $t_1$ and $t_2$ so that on the event $\wt{A}$ in (\ref{6.18}) the assumptions of Lemmas \ref{lem6.3} and \ref{lem6.4} are fulfilled. In the case of $t_1$ this is straightforward. We assume $\ell$ sufficient large so that (see (\ref{6.15}), (\ref{4.1}), (\ref{4.2})):
\begin{equation}\label{6.28}
\eta_\ell \, 4 \, |D_0|^{1/2}  \big(= e^{-\lol^{\wh{\eta}}} \,4 \{ 10 ( \lceil R_0 \rceil + 1)\}^{d/2} \, \lol^{1/2}\big) < \fr\;,
\end{equation}
and choose
\begin{equation}\label{6.29}
t_1 = \eta_\ell .
\end{equation}

\n
Then, the assumptions of Lemma \ref{lem6.3} hold on the event $\wt{A}$. Indeed, one has $\wt{\varphi}_{1,\lambda D_0,\o} \le \eta_\ell$ on $\lambda (D_0 \backslash D_0^{\rm int}) \supseteq \lambda D_0 \backslash D_0$ since $\lambda^d \le 2$ and $2 D_0^{\rm int} \subseteq D_0$, see (\ref{5.5}), and $\wt{\varphi}_{1,\lambda D_0,\o}$ vanishes outside $\lambda D_0$.

\medskip
Thus, for large $\ell$, we see that for all $u \in (0,1)$ and $t \in [s_\ell, t_\ell]$ on the event $\wt{A}$ in (\ref{6.18}) (recall that $J = [t,t + \ve_\ell])$ one has
\begin{equation}\label{6.30}
\begin{split}
\lod & \le \wt{\lambda}_{1,\o} (\lambda D_0) \big(1 - 4 \eta_\ell \,|D_0|^{1/2}\big)^{-1} \le (t + \ve_\ell) \, \lambda^{-2} \big(1 - 4\eta_\ell \,|D_0|^{1/2}\big)^{-1}
\\
&\hspace{-3ex}\stackrel{(\ref{6.28}), t \le t_\ell}{\le} 2(t_\ell + \ve_\ell) \stackrel{(\ref{5.15}), (\ref{5.17})}{<} 3 c_0 \lol^{-2/d}.
\end{split}
\end{equation}
We then turn to the choice of $t_2$.

\medskip
We consider $\o \in \wt{A}$ (see (\ref{6.18})) and an arbitrary $y \in \o$ such that $B(y, \lambda a)$ intersects $D_0$ and $\varphi = \varphi_{1,D_0,\o}$ is not identically $0$ on $B(y,\lambda a)$. Since $\varphi = 0$ on $B(y,a)$ we consider $x \in B(y, \lambda a) \backslash B(y,a)$ such that $\varphi(x) > 0$. We write $U$ for the connected component of $D_{0,\o}$ containing $x$ and $U'$ for the intersection of $U$ with $\overset{\circ}{B}(y, 100 a)$, the open ball with center $y$ and radius $100a$. With $\ell$ large and (\ref{6.30}) we can assume that (see (\ref{1.15}), (\ref{1.16}) for notation)
\begin{equation}\label{6.31}
\lod = \lambda_{- \frac{1}{2}\,\Delta} (U) < 3 c_0\, \lol^{-2/d} \le \lambda_d(100a)^{-2} \le \lambda_{- \frac{1}{2}\,\Delta} (U').
\end{equation}
Then, by (\ref{1.12}) (in a rather similar fashion to (\ref{4.39})), we have
\begin{equation}\label{6.32}
\begin{split}
\varphi(x) & = E_x [ \varphi (X_{T_{U'}}) \; \exp\{ \lod \,T_{U'}\}, \, T_{\overset{\circ}{B}(y, 100a)} < T_U]
\\
&\hspace{-4ex}\stackrel{(\ref{6.30}), (\ref{1.11})}{\le} c(d) \,(t + \ve_\ell)^{d/4} \, E_x [\exp\{3 c_0 \lol^{-2/d} \,T_{\overset{\circ}{B} (y, 100a)}\}, T_{\overset{\circ}{B}(y, 100a)} < T_U].
\end{split}
\end{equation}
Note that when Brownian motion enters $B(y,a)$, it exits $U$, so that using scaling and translation invariance, we find that
\begin{equation}\label{6.33}
\varphi(x) \le c(d) (t + \ve_\ell)^{d/4} \sup\limits_{1 \le |z| \le \lambda} E_z [\exp\{3 c_0 \,a^2 \lol^{-2/d} \,T_{\overset{\circ}{B}(0, 100)}\},  \,T_{\overset{\circ}{B}(0, 100)} < H_{B(0,1)}].
\end{equation}

\n
To bound the above expectation, we consider the functions $w(z) = |z|^{-b}$ where $b \in (0,d-2]$. The first and second radial derivatives are $\partial_r \,w = -b \,|z|^{-(b + 1)}$ and $\partial^2_r \,w = b(b + 1) \, |z|^{-(b + 2)}$, so that $\frac{1}{2}\, \Delta w= \frac{1}{2} \, \partial^2_r \,w + \frac{(d-1)}{2 |z|} \;\partial_r \,w =  - \frac{1}{2} \;b (d-2-b) \, |z|^{-(b + 2)}$. We then introduce the stopping time $S = T_{\overset{\circ}{B}(0,100)} \wedge H_{B(0,1)}$, so that for $z \in \IR^d$ with $1 < |z| \le \lambda$, under $P_z$
\begin{equation}\label{6.34}
w (X_{t \wedge S}) \, \exp \big\{- \dis\int^{t \wedge S}_0 \; \mbox{\f $\dis\frac{\frac{1}{2} \; \Delta w}{w}$} \;(X_s) \,ds\big\} = w(X_{t \wedge S}) \;\exp\big\{ \dis\int^{t \wedge S}_0 \; \mbox{\f $\dis\frac{ b (d-2-b)}{2 |X_s|^2}$} \; ds\big\}, \; t \ge 0,
\end{equation}

\n
is a uniformly integrable martingale when $b(d-2-b) < \lambda_d \,100^{-2}$ (it is then dominated by the $P_z$-integrable variable $\exp\{ \frac{1}{2} \,b(d-2-b) \, T_{\overset{\circ}{B}(0,100)}\})$. So for such $b$, after $P_z$-integration and letting $t$ tend to infinity, we have
\begin{equation}\label{6.35}
\begin{split}
|z|^{-b} &= E_z \big[ |X_S|^{-b} \; \exp\big\{ \dis\int^S_0 \mbox{\f $\dis\frac{b(d-2-b)}{2 |X_s|^2}$} \;ds \big\}\big]
\\
& \ge 100^{-b} \, E_z \big[ |X_S| = 100, \, \exp\big\{ \dis\int^S_0  \mbox{\f $\dis\frac{b(d-2-b)}{2 |X_s|^2}$} \;ds \big\}\big] + P_z [|X_S| = 1].
\end{split}
\end{equation}

\n
By a classical formula corresponding to $b = d-2$ in the first line of (\ref{6.35}), one also has $P_z [|X_S| = 100] = (1 - |z|^{-(d-2)}) \, / \, (1-100^{-(d-2)})$ (or see (6) on p.~29 of \cite{Durr84}). We now pick $b = b(d) \in (0, d-2)$ such that $b(d-2-b) < \lambda_d \, 100^{-2}$, and infer from (\ref{6.35}) that
\begin{equation}\label{6.36}
\begin{split}
E_z \big[ |X_S| = 100,\; \exp\big\{ \dis\int^S_0 \mbox{\f $\dis\frac{b(d-2-b)}{2 |X_s|^2}$} \;ds \big\}\big] &\le 100^b (|z|^{-b}  - P_z [|X_S| = 1]) 
\\
&\!\!\!\stackrel{|z| \ge 1}{\le} 100^b \,P_z [|X_S| = 100]
\\
&\!\!\! \stackrel{|z| \ge 1}{\le}  c'(d) \, (1 - \lambda^{-(d-2)}) \stackrel{(\ref{6.5})}{\le} c(d) \, u / |D_0|.
\end{split}
\end{equation}
As a result, when $\ell$ is large enough so that $3 c_0 \,a^2 \lol^{-2/d} < \frac{1}{2} \,b( d- 2 - b) \,100^{-2}$, the expectation in (\ref{6.36}) is bigger or equal to the expectation in (\ref{6.33}), and hence
\begin{equation}\label{6.37}
\varphi(x) \le c''(d) (t + \ve_\ell)^{d/4} u/|D_0| \stackrel{(\ref{6.30}),(\ref{4.1})}{\le} c_7 (d,\nu) \, u / |D_0|^{3/2}, \; \mbox{for all $x \in \bigcup_{y \in \o} B(y,\lambda a)$}.
\end{equation}
We can thus choose
\begin{equation}\label{6.38}
t_2 = c_7 \,u/|D_0|^{3/2}.
\end{equation}
Then, for large $\ell$, for all $u \in (0,1)$ and $s_\ell \le t \le t_\ell$ on $\wt{A}$ in (\ref{6.18}), one has by Lemma \ref{lem6.4}
\begin{equation}\label{6.39}
\wt{\lambda}_{1,\o} (\lambda D_0) \le \lod (1 - 4 c_7 \,u/|D_0|)^{-1}.
\end{equation}
We then introduce the notation (recall that $J = [t,t + \ve_\ell]$)
\begin{equation}\label{6.40}
\begin{split}
\wt{\zeta}_{1,\o} & = \log \big(\wt{\lambda}_{1,\o} (\lambda D_0)\big), \; \zeta_{1,\o} = \log \big(\lambda_{1,\o}(D_0) \big),
\\
\zeta_{\min}& = \log t, \; \zeta_{\max} = \log (t + \ve_\ell).
\end{split}
\end{equation}
Collecting (\ref{6.30}) and (\ref{6.39}), we see that for large $\ell$, for all $u \in (0,1)$, $t \in [s_\ell, t_\ell]$, one has on $\wt{A}$ from (\ref{6.18}), with $c_* = 5c_7 (d,\nu)$:
\begin{equation}\label{6.41}
\wt{\zeta}_{1,\o} - c_* \,u / |D_0| \stackrel{(\ref{6.39})}{\le} \zeta_{1,\o} \stackrel{(\ref{6.30})}{\le} \wt{\zeta}_{1,\o} + 5 \eta_\ell \,|D_0|^{1/2}.
\end{equation}

\medskip\n
However, $e^{\wt{\zeta}_{1,\o}} \in J / \lambda^2$ on $\wt{A}$, with $\lambda = e^{u/|D_0|}$, see (\ref{6.5}), and therefore
\begin{equation}\label{6.42}
\zeta_{\min} - (c_* + 2) \, u/|D_0| \le \zeta_{1,\o} \le \zeta_{\max} + 5 \eta_\ell \,|D_0|^{1/2} - 2 u / |D_0|.
\end{equation}
Note also that $\ve_\ell = \sigma \lol^{-(1 + 2/d)} + \lol^{-(2 + 2/d)} + e^{-\lol^{\wh{\eta}}}$, see (\ref{5.17}), so that for large $\ell$ for all $t \in [s_\ell,t_\ell]$ one has
\begin{equation}\label{6.43}
\zeta_{\max} - \zeta_{\min} = \log \big(1 + \mbox{\f $\dis\frac{\ve_\ell}{t}$}\big) \le  \log \big(1 + \mbox{\f $\dis\frac{\ve_\ell}{s_\ell}$}\big)  \stackrel{(\ref{5.10})}{\le} \ov{c} \,(d,\nu) \, \sigma / |D_0|.
\end{equation}

\n
We are going to choose in (\ref{6.49}) below
\begin{equation}\label{6.44}
\sigma_0 > 0 \; \mbox{and} \; u_0 = 0 < u_1 < \dots u_m < 1,
\end{equation}
satisfying (with $\ov{c}$ from (\ref{6.43}) and $c_*$ from (\ref{6.41}))
\begin{equation}\label{6.45}
2 u_{i+1} > \ov{c} \,\sigma_0 + (c_* + 2) \,u_i, \; \mbox{for} \; i = 0,1,\dots, m-1.
\end{equation}
It will then follow by (\ref{6.43}) that for any $\sigma \in (0, \sigma_0)$, for large $\ell$, for all $t \in [s_\ell, t_\ell]$:
\begin{equation}\label{6.46}
\zeta_{\min} - (c_* + 2) \, u_i / |D_0| > \zeta_{\max} + 5 \eta_\ell \, |D_0|^{1/2} - 2 u_{i+1} / |D_0|, \; \mbox{for $0 \le i < m$},
\end{equation}
so that the intervals
\begin{equation}\label{6.47}
J_i = [t \, e^{-(c_* + 2) \,u_i / |D_0|}, (t + \ve_\ell) \ e^{5 \eta_\ell |D_0|^{1/2} - 2u_i / |D_0|}], \; \mbox{for $1 \le i \le m$},
\end{equation}
are pairwise disjoint, included in $(0,t)$. In addition by (\ref{6.42}), with the choice $u_i$ for $u$, one will have $\lambda_{1,\o}(D_0) \in J_i$ on $\wt{A}$, and by (\ref{6.17}) it will follow that
\begin{equation}\label{6.48}
\IP[A] \le K \, \IP[ \lod \in J_i], \; \mbox{for $1 \le i \le m$ (with $K \stackrel{(\ref{6.19})}{=} e^{2 d \nu(1 + 2a)^d}$)}.
\end{equation}
This will prove Theorem \ref{theo6.2}. There remains to choose $\sigma_0 > 0$ and $u_i, 0 \le i \le m$, so that (\ref{6.44}), (\ref{6.45}) hold. We simply set
\begin{equation}\label{6.49}
\left\{ \begin{array}{l}
\mbox{$u_0 = 0$ and $u_{i+1} = \ov{c} \, \sigma_0 + (c_* + 2) \,u_i$, for $0 \le i <  m$ so that}
\\
u_i = \ov{c} \, \sigma_0 (1 + (c_* + 2) + \dots + (c_* + 2)^{i-1}) = \ov{c} \;\sigma_0\; \mbox{\f $\dis\frac{(c_* + 2)^i - 1}{c_* + 1}$}, \;\mbox{for $1 \le i \le m$},
\end{array}\right.
\end{equation}
and choose $\sigma_0$ small enough so that $u_m < 1$, so that (\ref{6.44}), (\ref{6.45}) hold. This completes the proof of Theorem \ref{theo6.2}. \hfill $\square$

\begin{remark}\label{rem6.5} \rm 1) The results in Section 3 of \cite{Szni97b} make it plausible that the scale $\lol^{-(1 + 2/d)}$ captures the correct size of the spectral gap, in the sense that
\begin{equation}\label{6.50}
\lim\limits_{\sigma \r \infty} \; \liminf\limits_{\ell \r \infty} \;\IP[R] = 1.
\end{equation}
As we explain below (\ref{6.52}), the spectral gap in $B_{4 \ell}$ is related to the fluctuations of $\lob$. The results of Section 3 of \cite{Szni97b} on the fluctuations of $\lob$ are written in the context of soft obstacles, but can be adapted to the present set-up. By (3.19) of Corollary 3.4 and (3.2) of \cite{Szni97b}, one knows that there is a $\zeta \in (0,1)$ such that for any $\ve > 0$ one can find $A_\ve > 0$ such that for large $k_0$
\begin{equation}\label{6.51}
\begin{array}{l}
\mbox{for more than two thirds of $k_0 \le k <  k_0 + [k_0^\zeta]$, there is a compact interval}
\\
\mbox{$I_{2^k, \ve} \subseteq (0,\infty)$ such that $\IP[\lo (B_{2^k}) \notin I_{2^k,\ve}] \le \ve$ and $| I_{2^k,\ve}|  \le A_\ve (\log 2^k)^{-(1 + 2/d)}$}.
\end{array}
\end{equation}

\n
(In essence, $I_{2^k,\ve}$, corresponds to $m_{2^k}(u) - m_{2^k}(v)$ in (3.19) of \cite{Szni98a} with the choice $u = 1- \frac{\ve}{2}$, $v = \frac{\ve}{2}$ and $A_\ve$ to $3 \Gamma$ of the same reference).

\medskip
Now, if $\ell = 2^k$, for a $k$ as above (with the corresponding $I_{\ell, \ve}$), one can consider $B_{4\ell}$ and the $7^d$ sub-boxes $B_{\ell,v} = B_\ell + \ell v$, $ v \in \cV = \{-3, \dots, 3\}^d$ of $B_{4 \ell}$, as well as the event $M_{\ell,\ve}$ such that $F_{(4 \ell)}$ in (\ref{3.9}) (with $4 \ell$ in place of $\ell$) occurs, and $\lo (B_{\ell,v}) \in I_{\ell, \ve}$ for each $v \in \cV$. By (\ref{3.10}) for large $k_0$ and $k$ as in (\ref{6.51}), one has
\begin{equation}\label{6.52}
\IP [M_{\ell = 2^k,\ve}] \ge 1 - (7^d + 1) \, \ve.
\end{equation}

Then, on $M_{\ell = 2^k,\ve}$, one of the $B \in \cC_{(4\ell)}$, see (\ref{3.9}), is such that $\lo (B \cap B_{4\ell})$ lies between $\lo (B_{4\ell})$ and $\lo (B_{4 \ell}) +(\log 4\ell)^{-(2 + 2/d)}$, and the same holds true for $\lo (B_{\ell,v})$ if $B_{\ell,v}$ contains $B \cap B_{4 \ell}$. Thus, $k_0$ (and hence $k$) being large; on $M_{\ell = 2^k,\ve}$, we can consider disjoint boxes $B_{\ell,v}$ and $B_{\ell, v'}$ in $B_{4 \ell}$ such that both $\lo (B_{\ell,v})$ and $\lo (B_{\ell, v'})$ lie in $[\lo (B_{4 \ell})$,  $\lo (B_{4 \ell}) + (\log 4 \ell)^{-(2 + 2/d)} + |I_{\ell, \ve}|]$.  Since $B_{\ell,v}$ and $B_{\ell,v'}$ are disjoint, this implies by the min-max principles, see Version 3 of Theorem 12.1, p.~301 of \cite{LiebLoss01}, that the spectral gap $\llo(B_{4 \ell}) - \lo (B_{4 \ell})$ is at most $(\log 4 \ell)^{-(2 + 2/d)} + |I_{\ell, \ve}| \stackrel{(\ref{6.51})}{\le} (\log 4\ell)^{(-2 + 2/d)} + A_\ve \lol^{-(1 + 2/d)}$. This shows that for any $\ve > 0$, for large $k_0$ (with $2^k$ playing the role of $4 \ell$)
\begin{equation}\label{6.53}
\begin{array}{l}
\mbox{for more than half of $k_0 \le k < k_0 + [k_0^\zeta]$, $\IP [\llo(B_{2^k}) - \lo (B_{2^k})  \le$}
\\
\mbox{$(A_\ve + 1) (\log \ell^k)^{-(1 + 2/d)}] \ge 1 - (7^d + 1) \, \ve$}.
\end{array}
\end{equation}
Incidentally, in the above argument ``half'' could be replaced by any number less than $1$. In any case, from (\ref{6.53}) one sees that it is plausible that (\ref{6.50}) holds.

\bigskip\n
2) One can naturally wonder whether the strategy in the proof of Theorem \ref{theo6.2} can be adapted to derive deconcentration estimates in the context of Poissonian soft obstacles, see (\ref{1.18}), and whether the corresponding lower bound on the spectral gap corresponding to Theorem \ref{theo6.1} can be established in this context as well. Incidentally, in the scaling identities corresponding to (\ref{6.11}), (\ref{6.12}), the original bump function $W(\cdot)$ in (\ref{1.18}) would be transformed into $\wt{W}(\cdot) = \frac{1}{\lambda^2} \, W(\frac{\cdot}{\lambda})$ (the case under present consideration formally corresponds to $W(\cdot) = \infty \, 1\{| \cdot | \le a\}$ and $\wt{W} (\cdot) = \infty \, 1\{| \cdot | \le \lambda a\}$).
 \hfill $\square$
\end{remark}

\section{Bose-Einstein condensation}
\setcounter{equation}{0}

In this section we combine the lower bound on the spectral gap obtained in the main Theorem \ref{theo6.1} with the results of Kerner-Pechmann-Spitzer in \cite{KernPechSpit20}. We prove a so-called type-I generalized Bose-Einstein condensation in probability for a model in the spirit of Kac-Luttinger \cite{KacLutt73}, \cite{KacLutt74} consisting of a non-interacting Bose gas among a Poisson cloud of hard obstacles made of closed balls of radius $a > 0$ centered at the points of the cloud, which has intensity $\nu > 0$. The dimension of space is $d \ge 2$. The radius $a$ although fixed can be arbitrarily small, and the complement in $\IR^d$ of the spherical impurities may possibly percolate, see Chapter 4 of \cite{MeesRoy96}. Our main result is Theorem \ref{theo7.1}.

\medskip
Following  \cite{KernPechSpit20}, we consider a {\it thermodynamic limit} in a grand-canonical set-up. Given a fixed particle density (not to be confused with the parameter of Section 2).
\begin{equation}\label{7.1}
\rho > 0,
\end{equation}
we consider the positive sequence $\ell_N, N \ge 1$, indexed by the particle number $N$, which tends to infinity and satisfies (see (\ref{1.5}) for notation)
\begin{equation}\label{7.2}
\rho\, |B_{\ell_N}| = N, \; \mbox{for $N \ge 1$}
\end{equation}
(in the notation of \cite{KernPechSpit20}, $2 \ell_N = L_N$).

\medskip
The events (see (\ref{1.9}), (\ref{1.4}) for notation)
\begin{equation}\label{7.3}
\Omega_N = \{ \o \in \Omega; \lo (B_{\ell_N}) < \infty\} = \{ \o \in \Omega; B_{\ell_N,\o} \not= \emptyset\}, \; N \ge 1
\end{equation}
are non-decreasing, have positive probability, and
\begin{equation}\label{7.4}
\mbox{$\bigcup_{N \ge 1} \, \Omega_N = \Omega_\infty$ (the event of full $\IP$-measure in (\ref{1.6}))}.
\end{equation}

\n
To stay within the framework of \cite{KernPechSpit20}, on $\Omega^c_N$ we replace the variables $\lo(B_{\ell_N}), j \ge 1$, which are all infinite on $\Omega^c_N$, by the ordered sequence of Dirichlet eigenvalues of $-\frac{1}{2}\,\Delta + 1$ in $B_{\ell_N}$ (any positive constant in place of $1$ would do). We thus define the modified single particle eigenvalues (see (\ref{1.9}) for notation)
\begin{equation}\label{7.5}
\ov{\lambda}_{j,\o} (B_{\ell_N}) = \left\{ \begin{array}{l}
\mbox{$\lambda_{j,\o} (B_{\ell_N})$,  when $j \ge 1$ and $\o \in \Omega_N$},
\\[1ex]
\mbox{the $j$-th Dirichlet eigenvalue of $-\frac{1}{2} \, \Delta + 1$ in $B_{\ell_N}$, when $j \ge 1$ and}
\\
\mbox{$\o \notin \Omega_N$}.
\end{array}\right.
\end{equation}
Note that
\begin{equation}\label{7.6}
\mbox{on $\Omega_\infty$, for large $N$, $\lambda_{j,\o} (B_{\ell_N}) = \ov{\lambda}_{j,\o} (B_{\ell_N})$, for all $j \ge 1$},
\end{equation}
i.e.~outside a $\IP$-negligible set, the original sequence of eigenvalues agrees with the modified sequences of eigenvalues, when $N$ is large. We denote by $\beta \in (0,\infty)$ the inverse temperature. Given a chemical potential $\mu \in (-\infty, \ov{\lambda}_{1,\o} (B_{\ell_N})$), the quantity $(e^{\beta(\ov{\lambda}_{j,\o} (B_{\ell_N}) - \mu)} - 1)^{-1}$ is the number of particles occupying the $j$-the eigenstate in the grand-canonical ensemble, see (2.7) of \cite{KernPechSpit20},  and we choose $\mu = \mu^\o_N \in (-\infty, \ov{\lambda}_{1,\o}(B_{\ell_N}))$ so that
\begin{equation}\label{7.7}
\mbox{\f $\dis\frac{1}{|B_{\ell_N}|}$} \; \sum\limits_{j \ge 1} \; \ov{n}\,^{j,\o}_N = \rho, \; \mbox{with} \; \ov{n}\,^{j,\o}_N = 1 \, / \, (e^{\beta(\ov{\lambda}_{j,\o} (B_{\ell_N}) - \mu^\o_N)} - 1).
\end{equation}
In view of (\ref{7.6}) and setting $n^{j,\o}_N = 1 \, / \, (e^{\beta(\lambda_{j,\o} (B_{\ell_N}) - \mu^\o_N)} - 1)$ on $\Omega_N$, we also see that
\begin{equation}\label{7.8}
\mbox{on $\Omega_\infty$, for large $N$, $\ov{n}\,^{j,\o}_N = n^{j,\o}_N$, for all $j \ge 1$},
\end{equation}
(i.e.~the modification (\ref{7.5}) is immaterial on $\Omega_\infty$ for large $N$).

\medskip
We recall some known facts. As $N \r \infty$, the random measures on $\IR_+$
\begin{equation}\label{7.9}
m_{N,\o} = \left\{ \begin{array}{l}
\mbox{\f $\dis\frac{1}{|B_{\ell_N}|}$} \; \sum\limits_{i \ge 1} \; \delta_{\lambda_{i,\o} (B_{\ell_N})}, \; \mbox{for $\o \in \Omega_N$},
\\[2ex]
0, \;\; \mbox{for $\o \in \Omega^c_N$},
\end{array}\right.
\end{equation}
are known to $\IP$-a.s. converge vaguely to a deterministic measure, the {\it density of states}
\begin{equation}\label{7.10}
\begin{array}{l}
\mbox{$m$ on $[0,\infty)$, characterized by its Laplace transform}
\\
\dis\int_{[0,\infty)} e^{-t \lambda} dm(\lambda) = \mbox{\f $\dis\frac{1}{(2 \pi t)^{d/2}}$}\;E^t_{0,0} [\exp \{ - \nu \,|W_t^a|\}], \; \mbox{for $t > 0$},
\end{array}
\end{equation}
where $E^t_{0,0}$ denotes the expectation with respect to the Brownian bridge in time $t$ from $0$ to $0$, and $W_t^a$ is the Wiener sausage of radius $a$ in time $t$, i.e.~the closed $a$-neighborhood of the Brownian bridge trajectory. (The proof is similar to that of Theorem 5.18, p.~99 of \cite{PastFigo92}). In view of (\ref{7.6}) we also see that
\begin{equation}\label{7.11}
\begin{array}{l}
\mbox{$\IP$-a.s., the measures $\ov{m}_{N,\o} = \mbox{\f $\dis\frac{1}{|B_{\ell_N}|}$} \; \dis\sum\limits_{i \ge 1} \; \delta_{\ov{\lambda}_{i,\o}(B_{\ell_N})}$ on $[0,\infty)$ converge vaguely}
\\
\mbox{as $N \r \infty$ to the measure $m$ in (\ref{7.10})}.
\end{array}
\end{equation}
One also knows that $m$ has a so-called Lifshitz tail close to $0$. More precisely, see Corollary 3.5 of \cite{Szni90a} or as in Theorem 10.2, p.~221 of \cite{PastFigo92}, one has (see above (\ref{0.2}) for notation):
\begin{equation}\label{7.12}
m([0,\lambda]) = \exp\big\{ - \nu \o_d (\lambda_d / \lambda)^{d/2} \big(1 + o(1)\big)\big\}, \; \mbox{as $\lambda \r 0$}.
\end{equation}
(The quantity $ \exp\{ - \nu \o_d (\lambda_d / \lambda)^{d/2}\}$ is the probability that the Poisson point process places no point in the open ball of radius $(\lambda_d/\lambda)^{1/2}$ centered at the origin. This ball has a principal Dirichlet eigenvalue for $-\frac{1}{2}\,\Delta$ equal to $\lambda$.) We then introduce the critical density for our system (see (2.6) of \cite{KernPechSpit20}), namely
\begin{equation}\label{7.13}
\rho_c(\beta) = \dis\int^\infty_0 \; \mbox{\f $\dis\frac{1}{e^{\beta \lambda} - 1}$} \; dm(\lambda) \; (< \infty \; \mbox{by (\ref{7.12}))}.
\end{equation}
In our model a {\it generalized Bose-Einstein condensation}, with a macroscopic occupation of an arbitrary small energy band of one-particle states, is known to occur when $\rho > \rho_c(\beta)$, see Theorem 2.5 of \cite{KernPechSpit20} or Theorem 4.1 of \cite{{LenoPastZagr04}}. 
The main result of this section is the following theorem, which pins down the nature of the condensation. It shows a type-I generalized Bose-Einstein condensation in probability, when $\rho > \rho_c(\beta)$:
\begin{theorem}\label{theo7.1}
When $\rho > \rho_c(\beta)$, then as $N$ tends to infinity,
\begin{equation}\label{7.14}
\left\{ \begin{array}{rl}
{\rm i)} & \mbox{$\ov{n}^{1,\o}_N / N$ tends to $\big(\rho - \rho_c(\beta)\big)/ \rho$ in $\IP$-probability, and}
\\[2ex]
{\rm ii)} & \mbox{$\ov{n}^{j,\o} / N$ tends to $0$ in $\IP$-probability, for any $j \ge 2$.}\qquad
\end{array}\right.
\end{equation}
Equivalently, for every fixed $N_1 \ge 1$, as $N \ge N_1$ tends to infinity,
\begin{equation}\label{7.15}
\left\{ \begin{array}{rl}
{\rm i)} & \mbox{$n^{1,\o}_N / N$ tends to $\big(\rho - \rho_c(\beta)\big)/ \rho$ in $\IP_{N_1}$-probability, and}
\\[2ex]
{\rm ii)} & \mbox{$n^{j,\o}_N / N$ tends to $0$ in $\IP_{N_1}$-probability,}
\end{array}\right.
\end{equation}
where $\IP_{N_1}$ stands for the conditional probability $\IP ( \cdot \, | \, \Omega_{N_1})$, see (\ref{7.3}).
\end{theorem}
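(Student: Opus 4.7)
The plan is to verify the hypotheses of the type-I Bose--Einstein condensation criterion of Kerner, Pechmann, and Spitzer~\cite{KernPechSpit20} in the present setting and read off (\ref{7.14}). Three inputs are required: the Lifshitz tail (\ref{7.12}) of the density of states; the supercriticality $\rho>\rho_c(\beta)$, which is our hypothesis; and a spectral gap lower bound in probability, with the property that the gap times $|B_{\ell_N}|$ diverges as $N\to\infty$. Theorem~\ref{theo6.1} supplies the last: for every $\eta>0$ one can choose $\sigma>0$ and $N_0$ so that for all $N\ge N_0$, the event
\[
A_N=\Omega_N\cap\big\{\ov{\lambda}_{2,\omega}(B_{\ell_N})-\ov{\lambda}_{1,\omega}(B_{\ell_N})\ge\sigma(\log\ell_N)^{-(1+2/d)}\big\}
\]
satisfies $\IP[A_N]\ge 1-\eta$, while $\sigma(\log\ell_N)^{-(1+2/d)}\,|B_{\ell_N}|\sim\sigma(2\ell_N)^d/(\log\ell_N)^{1+2/d}\to\infty$.

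To make the role of Theorem~\ref{theo6.1} transparent I would establish (\ref{7.14})(ii) directly. Fix $j\ge 2$, and pick $\eta,\sigma,N_0$ as above. On $A_N$, since $\mu_N^\omega<\ov{\lambda}_{1,\omega}(B_{\ell_N})\le\ov{\lambda}_{j,\omega}(B_{\ell_N})$ and the Bose--Einstein occupation $\lambda\mapsto(e^{\beta(\lambda-\mu_N^\omega)}-1)^{-1}$ is decreasing in $\lambda$,
\[
\ov{n}_N^{\,j,\omega}\;\le\;\ov{n}_N^{\,2,\omega}\;\le\;\big(e^{\beta(\ov{\lambda}_{2,\omega}(B_{\ell_N})-\ov{\lambda}_{1,\omega}(B_{\ell_N}))}-1\big)^{-1}\;\le\;\frac{(\log\ell_N)^{1+2/d}}{\beta\sigma},
\]
using $\ov{\lambda}_{2,\omega}-\mu_N^\omega\ge\ov{\lambda}_{2,\omega}-\ov{\lambda}_{1,\omega}$ and $e^x-1\ge x$. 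Dividing by $N\sim\rho(2\ell_N)^d$ yields $\ov{n}_N^{\,j,\omega}/N\to 0$ on $A_N$, whence (\ref{7.14})(ii) follows as $\eta>0$ was arbitrary. Statement (\ref{7.14})(i) is then obtained by combining (\ref{7.14})(ii) with the ``bulk'' assertion from~\cite{KernPechSpit20}, namely that $\sum_{j:\ov{\lambda}_{j,\omega}>\ov{\lambda}_{1,\omega}+\delta}\ov{n}_N^{\,j,\omega}/N\to\rho_c(\beta)/\rho$ in $\IP$-probability for every fixed $\delta>0$ (a consequence of the vague convergence (\ref{7.11}) of $\ov{m}_{N,\omega}$ to $m$ and the identification (\ref{7.13})), the total constraint $\sum_{j\ge 1}\ov{n}_N^{\,j,\omega}/N=1$ from (\ref{7.7}), and the generalized BEC of~\cite{KernPechSpit20}, which handles the contribution of the $j\ge 2$ with $\ov{\lambda}_{j,\omega}\le\ov{\lambda}_{1,\omega}+\delta$ as $\delta\downarrow 0$.

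Finally, the equivalence of (\ref{7.14}) and (\ref{7.15}) is a matter of formalism: by (\ref{7.8}), for $\omega\in\Omega_{N_1}\subseteq\Omega_\infty$ and $N$ large enough, $\ov{n}_N^{\,j,\omega}=n_N^{j,\omega}$, and the two modes of convergence match via $\IP[\Omega_{N_1}]>0$ (for $(\ref{7.14})\Rightarrow(\ref{7.15})$) and via $\IP[\Omega_{N_1}]\to 1$ as $N_1\to\infty$ (for the converse, using $\IP[\,\cdot\,]\le \IP[\,\cdot\,\cap\Omega_{N_1}]+\IP[\Omega_{N_1}^c]$). The main technical obstacle is the asymptotic control of the chemical potential $\mu_N^\omega$ and the passage from the vague convergence of $\ov{m}_{N,\omega}$ to $m$ to convergence of the full Bose--Einstein sum, uniformly enough to isolate the ground state; this is already carried out in~\cite{KernPechSpit20}. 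The novel input of the present paper is Theorem~\ref{theo6.1}: the spectral gap lower bound is precisely what rules out the type-II and type-III alternatives and concentrates the macroscopic condensate on the single lowest mode.
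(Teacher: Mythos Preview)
Your approach is essentially the paper's: both reduce (\ref{7.14}) to the criterion of Kerner--Pechmann--Spitzer, with Theorem \ref{theo6.1} supplying the spectral-gap input, and both treat the equivalence (\ref{7.14})$\Leftrightarrow$(\ref{7.15}) the same way. The paper simply invokes Theorem~2.9 of \cite{KernPechSpit20} as a black box after verifying its standing Assumptions~2.2; you instead give a direct elementary proof of (\ref{7.14})(ii)---bounding $\ov{n}_N^{\,j,\o}\le(\beta\sigma)^{-1}(\log\ell_N)^{1+2/d}$ on $A_N$---and then defer (\ref{7.14})(i) to the same machinery. Your derivation of (ii) is correct and more transparent than the black-box route, making explicit how the gap scale $(\log\ell)^{-(1+2/d)}$ beats the volume.

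One point you gloss over: to invoke the pieces of \cite{KernPechSpit20} you need for (i) (the ``bulk'' convergence and the generalized BEC), you still need to check Assumptions~2.2 of that reference in the present model. Items (i), (ii), (iv) there follow from (\ref{0.4}), (\ref{7.11}), and the Lifshitz tail (\ref{7.12}), as you indicate; but item (iii), the uniform expectation bound $\IE[\ov{m}_{N,\o}([0,\lambda])]\le m([0,\lambda])$, requires a short Dirichlet-bracketing argument that the paper carries out in (\ref{7.16})--(\ref{7.17}) and you do not mention. This bound is what lets one control, uniformly in $N$, the number of eigenvalues in the low-energy window (and hence the total occupation of the $j\ge 2$ low modes, not just each one individually). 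Adding that verification, your proposal is complete and matches the paper's argument with a slightly more hands-on presentation of (ii).
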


(When $\rho \le \rho_c(\beta)$, the above quantities in (\ref{7.14}), respectively in (\ref{7.15}), are known to converge to $0$ in $\IP$-probability, respectively $\IP_{N_1}$-probability, as $N \r \infty$, see for instance Theorem 3.2.4, p.~30 of \cite{Pech19}.)

\begin{proof}
The claim (\ref{7.15}) is a simple restatement of (\ref{7.14}) since $n^{j,\o}_N, j \ge 1, N \ge N_1$ coincide with $\ov{n}^{1,\o}_N, j \ge 1, N \ge N_1$ on $\Omega_{N_1}$ and $\lim_{N_1 \r \infty} \IP(\Omega_{N_1}) = 1$, see (\ref{7.4}), (\ref{7.5}), (\ref{7.7}). We only need to prove (\ref{7.14}).
 
 \medskip
 We want to apply Theorem 2.9 of \cite{KernPechSpit20}. We first need to check the Assumptions 2.2 of the above reference. We have $\IP$-a.s., $\ov{\lambda}_{1,\o} (B_{\ell_N}) \r 0$, as $N \r 0$, by (\ref{0.4}) and (\ref{7.6}), and $\IP$-a.s., $\ov{m}_{N,\o}$ converges vaguely to the deterministic measure $m$, as $N \r \infty$, by (\ref{7.11}). This shows that i) and ii) of Assumptions 2.2 of \cite{KernPechSpit20} hold. As we now show
 \begin{equation}\label{7.16}
\IE [ \ov{m}_{N,\o} ([0,\lambda])] \le m([0,\lambda]) \; \mbox{for $\lambda < 1$ and $N \ge 1$}.
\end{equation}

\medskip\n
This will imply that iii) of Assumptions 2.2 of the above reference holds as well. To prove (\ref{7.16}), we first observe that by Dirichlet bracketing and translation invariance, for $N, k \ge 1, \lambda \ge 0$, $\IE[m_{N,\o} ([0,\lambda])] \le \IE[m_{k^d N}([0,\lambda])]$ (we recall that by (\ref{7.2}) $\ell_{k^d N} = k \ell_N$). Then, on a set of full $\IP$-measure $m_{k^d N,\o}$ converges vaguely to $m$ as $k$ goes to infinity (see (\ref{7.9}), (\ref{7.10})), and hence on that set for every point $\lambda$ of continuity of $m([0, \cdot])$, $\lim_k m_{k^d N,\o}([0,\lambda]) = m([0,\lambda])$.  For any such fixed $\lambda$ we also have $m_{k^d N,\o}  ([0,\lambda]) \le m_{k^d N,\o = 0} ([0,\lambda])$ which is deterministic and bounded in $k$. Thus, by dominated convergence, we see that for any $\lambda$ point of continuity of $m ([0,\cdot])$, one has
 \begin{equation}\label{7.17}
\IE [m_{N,\o} ([0,\lambda])] \le m([0,\lambda])
\end{equation}
and this inequality extends to any $\lambda \ge 0$ by right-continuity and monotonicity. Since $ \ov{m}_{N,\o} ([0,\lambda]) = m_{N,\o} ([0,\lambda]) $ for all $\lambda < 1$ and $\o \in \Omega$ by (\ref{7.5}), the claim (\ref{7.16}) follows. 

\medskip
As for  iv) of Assumptions 2.2 in \cite{KernPechSpit20}, by the Lifshitz tail asymptotics (\ref{7.12}), one has, picking $\eta_1 \in (0,1)$ and setting $\wt{C}_1 = \nu \, \o_d \, \lambda_d^{d/2} (1 + \eta_1/2) \stackrel{(\ref{0.3})}{=} d \,c_0^{d/2} (1 + \eta_1 /2)$,
 \begin{equation}\label{7.18}
\lim\limits_N \; N^{1- \eta_1} \; m([0, \{\wt{C}_1 / \log (|B_{\ell_N}|)\}^{2/d}]) = 0.
\end{equation}
This shows that Assumptions 2.2 of  \cite{KernPechSpit20} are satisfied.

\medskip
Now by the lower bound on the spectral gap in Theorem \ref{theo6.1} and (\ref{7.6}), 
 \begin{equation}\label{7.19}
\lim\limits_N \; \IP [\ov{\lambda}_{2,\o} (B_{\ell_N}) - \ov{\lambda}_{1,\o} (B_{\ell_N}) \ge (\log \ell_N)^{-(1 + 2/d + \ve)}] = 1, \; \mbox{for any $\ve > 0$}.
\end{equation}
Together with (\ref{0.4}) (or (\ref{3.10})) and the fact that $\log |B_{\ell_N}| \sim d \log \ell_N$, as $N \r \infty$, this is more than enough to show that for $\wt{C}_2 = d \, c_0^{d/2} (1 + \eta_1 / 4)$ ($< \wt{C}_1$ above), one has
 \begin{equation}\label{7.20}
\lim\limits_N \; \IP [\ov{\lambda}_{2,\o} (B_{\ell_N}) - \ov{\lambda}_{1,\o} (B_{\ell_N}) \ge  N^{-(1 - \eta_1)} \;\mbox{and} \;   \ov{\lambda}_{1,\o} (B_{\ell_N}) \le \{\wt{C}_2 / \log (|B_{\ell_N}|)\}^{2/d}] = 1.
\end{equation}
The assumptions of Theorem 2.9 of \cite{KernPechSpit20} are thus fulfilled (with $c_2 =1$ and $c_3 =1$), and the claim (\ref{7.14}) follows. This concludes the proof of Theorem \ref{theo7.1}.
\end{proof}

\begin{remark}\label{rem7.2} \rm 1) One can wonder whether the above results extend to the context of soft Poissonian obstacles, see (\ref{1.18}). In the case of dimension $1$ we refer to \cite{Pech20} for results in the case when the strength of the soft obstacle tends to infinity with $N$, see Corollary 5.8 of this reference, and a less specific  companion  statement in the case of a fixed strength, see Corollary 5.6 of \cite{Pech20}.

\bigskip\n
2) In the case of hard spherical Poissonian obstacles in $\IR^d, d\ge 2$, in a suitable non-percolative regime for the vacant set, and suitably strong short-range repulsive pair-interactions, we refer to \cite{KernPech21b}, which, among other results, shows the absence of Bose-Einstein condensation into the normalized eigenstates of the Dirichlet Laplacian in $B_{\ell_N,\o}$ (in the notation of (\ref{1.4})). \hfill $\square$
\end{remark}


\end{document}